\documentclass{amsart}
\usepackage{amssymb}

\newcommand{\R}{\mathbf{R}}

\newtheorem{theorem}{Theorem}[section]

\newtheorem{lemma}[theorem]{Lemma}
\newtheorem{proposition}[theorem]{Proposition}
\theoremstyle{remark}
\newtheorem{remark}[theorem]{Remark}

\numberwithin{equation}{section}

\begin{document}

\title[Infinite time blow-up and slow decay]
{Infinite time blow-up and slow decay for 
the six dimensional energy-critical heat equation\\
with self-similarly decaying initial data}

\author[K. Hisa]{Kotaro Hisa}
\address[Kotaro Hisa]{Department of Applied Mathematics, 
Faculty of Science, 
Fukuoka University, 
8-19-1 Nanakuma, Jonan-ku,  Fukuoka 814-0180, Japan}
\email{hisak@fukuoka-u.ac.jp}

\author[J. Takahashi]{Jin Takahashi}
\address[Jin Takahashi]{Department of Mathematical and Computing Science, 
Institute of Science Tokyo, 
2-12-1 Ookayama, Meguro-ku, Tokyo 152-8552 Japan}
\email{takahashi.j.cf61@m.isct.ac.jp}

\author[E. Zhanpeisov]{Erbol Zhanpeisov}
\address[Erbol Zhanpeisov]{Mathematical Institute, 
Graduate School of Science, 
Tohoku University, 
6-3 Aramaki Aza-Aoba, Aoba-ku, Sendai 980-8578, Japan}
\email{zhanpeisov.erbol.d6@tohoku.ac.jp}

\subjclass[2020]{Primary 35K58, Secondary 35B40, 35B44}
\keywords{Energy-critical heat equation, Infinite-time blow-up, Gluing method.}

\begin{abstract}
We consider the six dimensional energy-critical semilinear heat equation 
with self-similarly decaying initial data.
Our main result shows the existence of 
sign-changing solutions that exhibit infinite-time blow-up   
and nonnegative solutions that decay strictly more slowly than the self-similar rate. 
Moreover, the blow-up and decay rates are not uniquely determined 
by the decay rate of the initial data, 
but exhibit a certain flexibility depending on the construction.
The proof is based on gluing suitably rescaled bubbles 
to forward self-similar solutions. 
\end{abstract}

\maketitle

\section{Introduction}
We consider the following energy-critical semilinear heat equation: 
\begin{equation}\label{eq:SHE1}
\left\{ 
\begin{aligned}
	&\partial_t u  = \Delta u + |u|^{\frac{4}{n-2}}u 
	&&\mbox{ in }\mathbf{R}^n\times(0,\infty), \\
	&u(\cdot,0)=u_0 &&\mbox{ in }\mathbf{R}^n, 
\end{aligned}
\right. 
\end{equation}
where $n\geq3$. 
This is a critical case of 
$\partial_t u  = \Delta u + |u|^{p-1}u$ with $p>1$ 
and the equation with the nonlinearity $|u|^{p-1}u$  
is one of the most extensively studied 
scalar nonlinear parabolic equations concerning 
singularity formation and global-in-time behavior. 
It is known from  \cite{CGS89} that 
all positive solutions of the steady problem 
$\Delta u + u^{(n+2)/(n-2)}=0$ consist of 
the rescaled bubbles 
$U_{\lambda,\xi}(x):=\lambda^{-(n-2)/2} U((x-\xi)/\lambda)$ 
for $\lambda>0$ and $\xi\in \mathbf{R}^n$, where 
$U$ is the Aubin--Talenti bubble defined by 
\begin{equation}\label{eq:bubble}
	U(x):= \left( 1+ \frac{|x|^2}{n(n-2)} \right)^{-\frac{n-2}{2}}. 
\end{equation}
The bubbles play a crucial role in the construction of type II 
finite-time blow-up solutions of problem \eqref{eq:SHE1} 
for $n=3,4,5,6$, see \cite{dPMW19,dPMWZZpre,Ha20,Sc12}. 
Here, blow-up is said to be of type II if its blow-up rate 
is not consistent with that of $v_t=|v|^{4/(n-2)}v$.

The bubbles are also a key ingredient 
in the construction of global-in-time solutions. 
Related to this, Fila--King \cite{FK12} gave deep insight 
for the infinite-time blow-up and decaying 
behaviors of positive radial solutions with $u_0\in C(\mathbf{R}^n)$ satisfying 
\[
	\lim_{|x|\to \infty} |x|^{\gamma'} u_0(x)= \ell'
	\quad \mbox{ for some }0<\ell'<\infty, \; \gamma'>\frac{n-2}{2}, 
\]
where its decay rate is faster than 
the self-similar decay $|x|^{-(n-2)/2}$. 
By using bubbles, 
the behaviors predicted in \cite[Conjecture 1.1]{FK12} 
were first confirmed by 
del Pino--Musso--Wei \cite{DMW20} for $n=3$ with $\gamma'>1$, 
and then by \cite{LWZZ24,WZZ24,WZ25} for $n=4,5,6$. 
For the remaining cases $n=3$ with $1/2<\gamma'\leq 1$, and $n\geq7$, 
see \cite[Remark 1.2]{WZ25}. 
For related results on global-in-time dynamics, 
see \cite{AD25,GK03,GV97,Ka96,PY03,PY14,Qu08}. 

For $n=6$, Harada \cite{Ha25pre} considered 
initial data $u_0\in \dot{H}^1(\mathbf{R}^6)$ with the 
decay rate $|x|^{-2}(\log |x|)^{-c}$ ($1/2<c<1$)  
and the nontrivial modifications of such $u_0$. 
He constructed positive solutions 
that decay more slowly than the self-similar rate $t^{-1}$,
sign-changing solutions that exhibit infinite-time blow-up 
and sign-changing solutions 
whose absolute values oscillate between $0$ and $\infty$ as $t\to\infty$. 
We note that the solutions in \cite{Ha25pre} 
belong to the energy class $C([0,\infty); \dot{H}^1(\mathbf{R}^6))$. 
To the best of our knowledge, for $u_0\not\in\dot{H}^1(\mathbf{R}^6)$, 
such peculiar solutions remain unknown.

In this paper, we also consider the case $n=6$ and restrict our attention 
to the case $u_0\not \in \dot{H}^1(\mathbf{R}^6)$. 
More precisely, we impose the explicit self-similar decay 
\begin{equation}\label{eq:SScond}
	\lim_{|x|\to \infty} |x|^2 u_0(x)= \ell_A,  
\end{equation}
where $\ell_A\neq0$ can be varied near $0$ 
and is determined as follows. 
For $A\neq0$, let $\theta_A$ be a 
forward self-similar solution of the equation in \eqref{eq:SHE1} for $n=6$ 
of the form 
\begin{equation}\label{eq:SSSprofile}
	\theta_A(x,t) 
	= (t+1)^{-1} \Theta_A\left(\frac{|x|}{\sqrt{t+1}}\right) 
	\quad \mbox{ for }x \in \mathbf{R}^6, \; t>0, 
\end{equation}
with the profile $\Theta_A(r)$ ($r>0$) 
satisfying $\Theta_A(0)=A$ and $\Theta_A'(0)=0$. 
Then, we set 
\[
	\ell_A:=\lim_{r\to \infty}r^2 \Theta_A(r). 
\]
Here, the existence of $\theta_A$ and $\ell_A$ 
follows from \cite[Theorem 5]{HW82}. 
Note that the sign of $\ell_A$ coincides with $A$. 
Moreover, $\ell_A\to 0$ as $A\to0$, see \cite[Theorem 1.1 (iii)]{Na06}. 

As notation, for functions $f_1, f_2\geq 0$, 
we write $f_1 \lesssim f_2$ if $f_1(t) \le C f_2(t)$ for all sufficiently large $t$. 
We write $f_1 \ll f_2$ if $f_1(t)/f_2(t) \to 0$ as $t\to\infty$. 
For functions $g_1$ and $g_2$, 
we write $g_1 \sim g_2$ if $|g_1/g_2|\lesssim 1$ and $|g_1/g_2|\gtrsim 1$
for all sufficiently large $t$. 
We denote the characteristic function on a set $D$ by $\mathbf{1}_D$. 
Let $\eta\in C^\infty_0(\mathbf{R}^6)$ satisfy $0\le \eta \le 1$, 
$\eta(x) =1$ for $|x| \le 1$ and 
$\eta (x)=0$ for $|x| \ge 2$. 

Our main result is the following: 

\begin{theorem}\label{Theorem:A}
Let $n=6$ and $1/2<a_1<a<1$. 
Then there exists $0<A_0<2/5$ such that the following holds 
for each $-A_0<A<A_0$: there exists $u_0\in L^\infty(\mathbf{R}^n)$ 
satisfying \eqref{eq:SScond} such that problem\eqref{eq:SHE1} 
has a radially symmetric global-in-time solution $u$ of the form 
\[
	u(x,t) = 
	\lambda(t)^{-2} \left(
	1+\frac{|x|^2}{24\lambda(t)^2}
	\right)^{-2} 
	\eta\left( \frac{x}{\sqrt{t}}\right) 
	+ \theta_A(x,t) + v(x,t)
\]
for all $x\in \mathbf{R}^6$ and $t>0$. 
Here, $\lambda$ and $v$ satisfy that 
\[
\begin{aligned}
	&\lambda(t) 
	\sim 
	t^{\frac{5A}{4}} \ll \sqrt{t}
	&&\mbox{ for }t\gg1, \\
	&\begin{aligned}
	|v(x,t)|
	&\lesssim 
	t^{-1}(\log(e+t))^{2(7-a)} \mathbf{1}_{|x|\le \sqrt{t}} \\
	&\quad 
	+ (\log(e+t))^{-2a_1}|x|^{-2}\mathbf{1}_{|x|\ge \sqrt{t}} 
	\end{aligned}
	&&\mbox{ for }x\in\mathbf{R}^6, \; t\gg1, \\
	&|v(x,t)| \ll \left| \lambda(t)^{-2} \left(
	1+\frac{|x|^2}{24\lambda(t)^2}
	\right)^{-2} 
	\eta\left( \frac{x}{\sqrt{t}}\right) 
	+ \theta_A(x,t) \right| 
	&&\mbox{ for }x \in \mathbf{R}^6, \; t\gg1. 
\end{aligned}
\]
Moreover, $u$ is sign-changing in the infinite-time blow-up case $-A_0<A<0$ 
and is nonnegative in the slowly decaying case $0<A<A_0$. 
\end{theorem}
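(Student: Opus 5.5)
We construct $u$ by an inner--outer gluing scheme in the spirit of del Pino--Musso--Wei and of \cite{WZZ24,WZ25}. The background is the self-similar solution $\theta_A$, and a bubble $U_{\lambda(t)}$ with slowly varying scale is glued at the origin.

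\emph{Choice of $\lambda$.} The heuristic is the following. Near $x=0$ we have $\theta_A(0,t)=A/(t+1)$, so the bubble sees a perturbation of the form $\Delta+pU^{p-1}$ acting on $\theta_A\approx A t^{-1}$, with $p=2$ in dimension six. The orthogonality condition against $Z_0=\partial_\lambda U_\lambda$ is
\[
\int \bigl(-\dot\lambda\,\partial_\lambda U_\lambda + 2U_\lambda\theta_A + \dots\bigr)Z_0\,dx=0 .
\]
In $n=6$ the kernel $Z_0\sim|y|^{-4}$ is integrable against $U$, so after the change of variables $y=x/\lambda$ this reduces at leading order to an ODE
\[
\frac{\dot\lambda}{\lambda}=\frac{c_*A}{t}+\text{(lower order)}.
\]
The explicit constants $\int U^{2}Z_0$ and $\int Z_0^2$ (a log-free computation in $n=6$) give $c_*=5/4$. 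Hence $\lambda\sim t^{5A/4}$. The restriction $|A|<A_0<2/5$ guarantees $\lambda\ll\sqrt t$, since $5A/4<1/2$. For $A<0$ we get $\lambda\to0$, which is infinite-time blow-up; for $A>0$ the bubble spreads and decays more slowly than $t^{-1}$.

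\emph{Decomposition.} We write
\[
u=U_\lambda\eta(x/\sqrt t)+\theta_A+\lambda^{-2}\phi_{\rm in}(x/\lambda,t)\eta_R+\psi,
\]
with the following pieces:
\begin{itemize}
\item an inner problem on $|y|\le 2R(t)$, with $R(t)=(\log t)^{\beta}$ or a small power of $t/\lambda^2$, solved via the linear theory for $\partial_\tau\phi=\Delta\phi+2U\phi+h$ in time $\tau=\int\lambda^{-2}$, under the orthogonality condition that fixes $\dot\lambda$;
\item an outer problem for $\psi$, i.e.\ the heat equation with potential $2(\theta_A+U_\lambda\eta)$, forced by the cutoff errors $U_\lambda\Delta\eta$ and $\nabla U_\lambda\cdot\nabla\eta$, by the interaction term $2U_\lambda\theta_A(1-\eta_R)$, and by the quadratic terms.
\end{itemize}

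\emph{Outer estimates.} The region $|x|\ge\sqrt t$ is where the tail of the bubble, of size $\lambda^2|x|^{-4}$ there, and the cutoff errors live. To control $\psi$ we use weighted norms matching the claimed bound on $v$: $t^{-1}(\log t)^{2(7-a)}$ inside the parabolic region, and $(\log t)^{-2a_1}|x|^{-2}$ outside. The slowly varying logarithmic weights absorb losses from the linearization around $\theta_A$. We do not treat $2\theta_A\psi$ as a perturbation of the heat equation. Instead we use the linearized operator around the self-similar solution in similarity variables. For small $|A|$ its spectrum is a small perturbation of the Hermite-type spectrum, so solutions decay essentially at the self-similar rate, with at most a logarithmic loss that is absorbed because $a,a_1$ lie in $(1/2,1)$.

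\emph{Initial data and fixed point.} The initial data are $u_0=U_{\lambda(0)}\eta+\Theta_A+\psi(0)$, which has limit $|x|^2u_0\to\ell_A$. The free parameters (the initial value of $\lambda$ and the component of $\psi(0)$ along the unstable direction) are fixed by a degree or continuity argument. Existence then follows from a Schauder fixed point in the product space of $(\phi_{\rm in},\psi,\lambda)$.

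\emph{Sign properties.}
\begin{itemize}
\item For $A<0$ we have $\theta_A<0$, and $\theta_A$ dominates far away while the bubble dominates near $0$, so $u$ changes sign.
\item For $A>0$ both main terms are positive and $|v|\ll$ main part, which gives $u\ge0$ for large $t$. Nonnegativity at earlier times can be arranged by choosing $\psi(0)$ small.
\end{itemize}

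\emph{Main obstacle.} The hardest step is the outer problem. The forcing in $|x|\sim\sqrt t$ is only $\lambda^{-2}(\sqrt t/\lambda)^{-4}t^{-1}\sim\lambda^2t^{-3}$, and the interaction of $\psi$ with $\theta_A$ is critical at the self-similar scale. Closing the estimates therefore requires the precise logarithmic bookkeeping, with exponents $2(7-a)$ and $2a_1$, and a careful check that the correction to $\dot\lambda$ coming from $\psi(0,t)$ is $o(1/t)$, so that it does not spoil $\lambda\sim t^{5A/4}$.
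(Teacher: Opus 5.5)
Your architecture matches the paper's: a bubble glued to the exact self-similar solution $\theta_A$, an orthogonality condition giving $\dot\lambda/\lambda\approx 5A/(4t)$, an inner problem in the time variable $\sigma=\int\lambda^{-2}$, and the sign argument for $A<0$. \textbf{However, the quantitative core fails as stated.} The bound $t^{-1}(\log(e+t))^{2(7-a)}$ in the theorem comes only from the inner correction $\lambda^{-2}\phi\,\eta_R$, since $|\phi|\le\sigma^{-1}R(\sigma)^{7-a}\langle y\rangle^{-7}$ with $\sigma\sim t^{1-5A/2}$. That correction is supported in $|x|\le2\lambda R\ll\sqrt t$ and is dominated there by the bubble, so it cannot be the weight for $\psi$. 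The outer function must be strictly smaller than $\theta_A$ on the parabolic scale: the paper uses $|\psi|\le|A|\,t^{-1}R^{-a_1}[\mathbf{1}_{|x|\le\sqrt t}+t|x|^{-2}\mathbf{1}_{|x|\ge\sqrt t}]$ with $R=(\log(e+t))^2$. This is forced by the orthogonality condition, in which $\psi(\lambda y,t)$ enters $\dot\lambda/\lambda$ on the same footing as $\theta_A(0,t)=A/(t+1)$. With your weight, $\psi(0,t)$ may be $\gg|A|/t$ and swamps the term $5A/(4t)$; nor would you get $|v|\ll|\theta_A|$ at $|x|\sim\sqrt t$. Your criterion ``$o(1/t)$'' is also too weak. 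A correction $\delta(t)$ to $\dot\lambda/\lambda$ preserves $\lambda\sim t^{5A/4}$ only if $\int^\infty|\delta|\,dt<\infty$; for $\delta=1/(t\log t)$ one gets $\lambda\sim t^{5A/4}\log t$. In the paper the $\psi$-contribution to $\delta$ is $O(|A|t^{-1}(\log t)^{-2a_1})$. Convergence of $\int_{t_0}^\infty s^{-1}(\log s)^{-2a_1}\,ds$ is exactly where $a_1>1/2$ is used: it makes the modulation map send $\{|\mu|\le|A|\lambda_0\}$ into itself. The condition $a>a_1$ ensures the inner--outer coupling terms in $\mathcal{E}$ contribute only $R^{-(a-a_1)}w_{\rm out}\ll A^2w_{\rm out}$. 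The exponents do not serve to ``absorb logarithmic losses of the linearization around $\theta_A$''.

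\textbf{Your outer linear theory is also unsupported, and it is unnecessary.} The gapped Hermite picture belongs to Gaussian-weighted $L^2$, which contains no $|x|^{-2}$ tails. In spaces that do contain them, $\Delta+\frac{\xi}{2}\cdot\nabla+1$ has a continuum of rates up to $0$: heat data $\sim\langle x\rangle^{-2\gamma}$ with $1\le\gamma<3$ decay like $t^{-\gamma}$. Moreover, $\partial_A\Theta_A$ is a neutral mode of the linearization. The paper instead uses the free heat kernel and puts everything on the right-hand side. Since $\theta_A$ solves the nonlinear equation exactly, $|\theta_A|\theta_A$ cancels. The worst remaining term, $|\theta_A||\psi|\lesssim A^2t^{-2}R^{-a_1}[\mathbf{1}_{|x|\le\sqrt t}+t^2|x|^{-4}\mathbf{1}_{|x|\ge\sqrt t}]$, has Duhamel integral $\lesssim A^2w_{\rm out}$: an $|A|$-gain with no logarithmic loss. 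Several further points need correcting.
\begin{itemize}
\item The full potential $2\lambda^{-2}U(x/\lambda)\eta$ should not be kept in the outer operator, since near the core it carries the kernel and unstable mode of $\Delta+2U$. Its $\eta_R$-part acting on $\psi$ belongs in $\mathcal{G}$, and the rest is the small forcing $2\lambda^{-2}U\psi\,\eta(1-\eta_R)$ in $\mathcal{E}$.
\item The unstable direction to control is the positive eigenvalue $\gamma_0$ of $\Delta+2U$, with eigenfunction $Z_0$ (not the scaling kernel). It is removed linearly inside the inner linear theory through the datum $\phi(\cdot,t_0)=C_\psi Z_0$, with $\psi(\cdot,t_0)=0$ and $\mu(t_0)=0$. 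No degree argument on $\psi(0)$ is needed.
\item For $A>0$, positivity for all $t$ comes from $u(\cdot,t_0)>0$ and the comparison principle, not from $|v|\ll$ main part, which holds only for $t\gg1$.
\item A Schauder argument on $[t_0,\infty)$ needs a compactness you do not provide. The paper works on $[t_0,\tau]$, passes to $\tau\to\infty$ by a diagonal argument, and starts at $t_0\gg1$ before shifting time.
\end{itemize}
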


Theorem \ref{Theorem:A} gives the first construction 
of infinite-time blow-up solutions and slowly decaying solutions 
of problem \eqref{eq:SHE1} 
with self-similarly decaying initial data. 
Concerning the decaying case, 
we recall from Naito \cite[Theorem 1.1]{Na20} that 
there exists $\tilde \ell_*>0$ such that 
if the initial data $\tilde u_0\in C(\mathbf{R}^6)$ satisfies 
\begin{equation}\label{eq:Nacond}
	\lim_{|x|\to\infty} |x|^2 \tilde u_0(x)=\tilde \ell, 
	\quad 
	0<\tilde u_0(x)\leq \tilde \ell_*|x|^{-2} \mbox{ for }x\in \mathbf{R}^6 
\end{equation}
with some $0<\tilde \ell<\tilde \ell_*$, 
then the solution exists globally-in-time 
and has the self-similar decay $t^{-1}$. 
Our $u_0$ lies slightly outside \eqref{eq:Nacond} and 
the solution decays strictly slower than $t^{-1}$. 
Therefore, we provide the first example of solutions 
whose initial data exhibit self-similar decay, while the solutions
do not follow the self-similar decay rate. 
Moreover, the rate can be chosen to a certain extent. 
These imply that even a slight deviation from \eqref{eq:Nacond}
can lead to uncommon behaviors. 

The known solutions in \cite{DMW20,Ha25pre,LWZZ24,WZZ24,WZ25} 
for $u_0\in \dot{H}^1(\mathbf{R}^n)$ 
have been constructed by the parabolic inner-outer gluing method 
developed by Cort\'azar--del Pino--Musso \cite{CDM20}. 
Under $u_0\in \dot{H}^1(\mathbf{R}^n)$, they glue a suitably rescaled bubble 
to a solution of the linear heat equation. 
In contrast, we glue the bubble to 
a forward self-similar solution of the semilinear heat equation. 
This is an essential modification, 
which allows us to handle the self-similar decay $|x|^{-2}$ in \eqref{eq:SScond}. 
Due to the ansatz in Section \ref{sec:system} 
(see in particular Remark \ref{Remark:reason6D}), 
our argument works only in the case $n=6$. 
For the overview of the gluing construction 
in this paper, see Appendix \ref{sec:blue}. 

By the proof, we see that 
the solution $u$ is positive further inside the inner region $|x|\ll \lambda(t)$, 
behaves as $u(x,t)\sim \lambda(t)^{-2}U(x/\lambda(t))$ 
in the inner region $|x|\lesssim \lambda(t)$ and 
behaves as $u(x,t)\sim \theta_A(x,t)$ 
in the self-similar region $|x|\sim \sqrt{t}$. 
Moreover, when $A<0$, the solution 
is negative away from the self-similar region $|x|\gtrsim \sqrt{t}$. 
We note that 
our proof also works 
for constructing solutions of the form 
\[
	u(x,t) = 
	\lambda(t)^{-2} \left(
	1+\frac{|x-\xi(t)|^2}{24\lambda(t)^2}
	\right)^{-2} 
	\eta\left( \frac{x-\xi(t)}{\sqrt{t}}\right) 
	+ \theta_A(x,t) + v(x,t), 
\]
where $\xi(t)$ is determined by an orthogonality condition 
and satisfies growth estimates on $|\xi(t)|$ and $|\xi'(t)|$. 
However, we do not know whether the resultant solution  
is actually nonradial or not, 
since the authors could not exclude the case $\xi(t)\not \equiv0$. 

The rest of this paper is organized as follows. 
In Section \ref{sec:system},  
we derive the inner--outer gluing system. 
In Section \ref{sec:choice}, we choose an appropriate 
modulation parameter $\lambda$. 
In Section \ref{sec:inner}, we consider the inner problem. 
In Section \ref{sec:outer}, we solve the outer problem and 
complete the proof of Theorem \ref{Theorem:A}. 
For the reader's convenience, we summarize 
an overview of the gluing construction  
in Appendix \ref{sec:blue}.

\section{Inner--outer gluing system}\label{sec:system}
In the rest of this paper, we set $n=6$. 
For the reason why the argument does not work when 
$n\neq 6$, see Remark \ref{Remark:reason6D}. 
We construct the desired solutions by solving 
\begin{equation}\label{eq:6Dt0}
	\partial_t u - \Delta u = |u|u 
	\quad \mbox{ in }\mathbf{R}^6\times(t_0,\infty) 
	\mbox{ with }t_0\gg1. 
\end{equation}
Here and below, 
we write $t_0\gg1$ if $t_0$ is sufficiently large. 
Once we construct solutions of this equation 
with appropriate initial data $u(\cdot,t_0)$, 
Theorem \ref{Theorem:A} follows from shifting the time variable. 
Then, we seek solutions of \eqref{eq:6Dt0} of the form 
\begin{equation}\label{eq:Ansatzu} 
	u(x,t) 
	= 
	\underbrace{ 
	\lambda^{-2}(t) U\left( y \right) 
	\eta\left( \tilde{y} \right) 
	+ \theta_A(x,t) 
	}_{\text{leading terms}} 
	+ \underbrace{ 
	\underbrace{ 
	\lambda^{-2}(t) \phi\left( y, t \right) 
	\eta_R(y,t) 
	}_{\text{an inner profile}}
	+\underbrace{ \psi(x,t), 
	}_{\text{an outer profile}}
	}_{\text{remainder terms}}
\end{equation}
where $\phi$, $\psi$ and $\lambda$ are unknown functions, 
the modulation parameter $\lambda$ will be chosen later by an orthogonality condition  
and $y$ and $\tilde y$ are given by 
\[
	y = \frac{x}{\lambda(t)}, \quad  \tilde{y} = \frac{x}{\sqrt{t}}. 
\]
Moreover, $U(y) = ( 1 + |y|^2/24 )^{-2}$ is the bubble 
defined by \eqref{eq:bubble} with $n=6$. 
Recall that $\eta\in C^\infty_0(\mathbf{R}^6)$ 
satisfies $\eta(\tilde y) =1$ for $|\tilde y| \le 1$, $\eta (\tilde y)=0$ 
for $|\tilde y| \ge 2$ 
and $0\le \eta(\tilde y) \le 1$ for $\tilde y\in \mathbf{R}^6$. 
Set $\eta_R(y,t) := \eta(y/R(t))$, 
where $R(t)$ is defined by
\begin{equation}\label{eq:defR}
	R(t):=(\log(e+t))^2.
\end{equation}

Substituting $u$ into \eqref{eq:6Dt0} with straightforward computations gives 
\[
\begin{aligned}
	&\lambda^{-4} \left[ \lambda^2 \partial_t \phi(y,t) 
	- \Delta\phi(y,t) 
	- 2 U(y) \phi(y,t)\eta(\tilde{y}) \right] \eta_R(y,t)
	+ \partial_t\psi(x,t) -\Delta\psi(x,t)  \\
	&= 
	\lambda^{-4} \left[ 
	\lambda \dot{\lambda} (2 U(y) + y\cdot \nabla U(y))
	+ 2\lambda^2 U(y) \theta_A  
	+ 2\lambda^2 U(y) \psi 
	\right] \eta(\tilde{y}) \eta_R(y,t) \\
	&\quad +\mathcal{N}[\phi,\psi,\lambda](x,t) 
	+ {\mathcal{E}}[\phi,\psi,\lambda](x,t) 
	+ \tilde{\mathcal{E}}[\lambda](x,t) 
\end{aligned}
\]
with $\dot{\lambda}:= d\lambda/dt$. Here, omitting the variables 
when the function is evaluated at $(x,t)$ or $t$, we set 
\begin{align}
	&\label{eq:defofN}
	\begin{aligned}
	&\mathcal{N}[\phi,\psi,\lambda] 
	:=|\lambda^{-2} U\left( y \right) 
	\eta\left( \tilde{y} \right) 
	+ \theta_A
	+ \lambda^{-2} \phi( y, t ) 
	\eta_R(y,t) + \psi| \\
	&\times 
	(\lambda^{-2} U\left( y \right) 
	\eta\left( \tilde{y} \right) 
	+ \theta_A
	+ \lambda^{-2} \phi\left( y, t \right) 
	\eta_R(y,t) + \psi ) \\
	&
	-\lambda^{-4} 
	U(y)^2 \eta(\tilde{y})^2 
	- |\theta_A|\theta_A
	-2 \lambda^{-2} U(y) \eta(\tilde{y})
	(\theta_A+\psi+ \lambda^{-2}\phi(y,t)\eta_R(y,t)), \\
	\end{aligned} \\
	&\label{eq:cEdef}
	\begin{aligned}
	&{\mathcal{E}}[\phi,\psi, \lambda] 
	:= \lambda^{-4}R^{-2} \phi(y,t) \Delta\eta\left(\frac{y}{R}\right) 
	+ 2\lambda^{-4}R^{-1} \nabla\phi(y,t) 
	\cdot \nabla\eta\left(\frac{y}{R}\right)\\
	&+ \lambda^{-2} \phi(y,t) 
	\nabla\eta\left(\frac{y}{R}\right)\cdot
	\frac{y}{R}\frac{\partial_t(\lambda R)}{\lambda R}  
	+ \lambda^{-3}\dot{\lambda}\left(2\phi(y,t) 
	+ y\cdot \nabla \phi(y,t)\right)\eta_R(y,t) \\
	&
	+ 2\lambda^{-2} U(y) \psi\eta(\tilde{y})(1-\eta_R(y,t)) 
	\end{aligned} 
\end{align}
and 
\begin{equation}\label{eq:tilcEdef}
\begin{aligned}
	&\tilde {\mathcal{E}}[\lambda]:= 
	\lambda^{-3}\dot{\lambda} (2 U(y) + y\cdot \nabla U(y)) 
	\eta(\tilde{y}) (1-\eta_R (y,t))
	\\
	&+ \lambda^{-4}U(y)^2 (\eta(\tilde{y})^2 -\eta (\tilde{y})) 
	+ 2\lambda^{-2} U(y) \theta_A \eta(\tilde{y})(1-\eta_R(y,t)) \\
	&+2^{-1} t^{-1}\lambda^{-2} U(y) \tilde{y} 
	\cdot \nabla \eta (\tilde{y}) 
	+ 2\lambda^{-3}t^{-\frac{1}{2}}\nabla U(y) \cdot \nabla \eta (\tilde{y})
    + \lambda^{-2}t^{-1} U(y) \Delta\eta (\tilde{y}). 
\end{aligned}
\end{equation}

To separate the scales of $y$ and $\tilde y$, we may assume that 
\begin{equation}
\label{eq:Ansatzmu-}
	18\lambda(t) R(t) \le \sqrt{t} \quad \mbox{ for }t>t_0\gg1. 
\end{equation}
This gives $\eta(\tilde y) \eta_R(y,t) =\eta_R(y,t)$. 
Then, similarly to \cite{WZ25}, 
by noting the localization of $\eta_R(y,t)$, 
we obtain the following inner-outer gluing system 
consisting of the inner problem for $\phi$ 
and the outer problem for $\psi$: 
\begin{align}
	&\label{eq:Innerproblem-}
	\lambda^2 \partial_t \phi - \Delta_y \phi 
	- 2 U(y) \phi 
	= \mathcal{G}[\psi, \lambda], 
	&&
	y \in B_{4R(t)}, \; t>t_0, \\
	&\label{eq:Outerproblem-}
	\partial_t \psi - \Delta_x \psi = \mathcal{H}[\phi, \psi, \lambda], 
	&&x\in \mathbf{R}^6, \; t> t_0, 
\end{align}
where the data at $t=t_0$ will be chosen suitably and 
\begin{align}
&\label{eq:defofH-}
\begin{aligned}
	\mathcal{G}[\psi, \lambda]
	&:= \lambda \dot{\lambda}(2 U(y) + y\cdot \nabla U(y))
	+ 2\lambda^2 U(y) \theta_A(\lambda y, t) \\
	&\quad + 2\lambda^2 U(y) \psi(\lambda y,t), 
\end{aligned}\\
&\label{eq:defofG-}
	\mathcal{H}[\phi, \psi, \lambda] 
	:= \mathcal{N}[\phi, \psi, \lambda] 
	+ {\mathcal{E}}[\phi, \psi, \lambda] 
	+ \tilde {\mathcal{E}}[\lambda]. 
\end{align}

We assume that $\lambda$ is decomposed as 
\begin{equation}\label{eq:ansatz_mu_0-}
	\lambda(t) = \lambda_0(t) + \mu(t), 
	\quad |\mu(t)| \le \frac{\lambda_0(t)}{9}, 
	\quad |\dot{\mu}(t)| \le \frac{|\dot{\lambda}_0(t)|}{9}. 
\end{equation}
Then, we determine the leading term $\lambda_0$ as follows, 
where $\mu$ will be fixed in Section \ref{sec:choice}. 
We recall that the linearized operator $\Delta + 2 U$ has
a bounded radial kernel 
\begin{equation}\label{eq:kernels}
	Z (x) := 2 U(x) + x\cdot \nabla U(x), 
\end{equation}
where  $Z$ corresponds to  the scaling direction. 
Then, the orthogonality condition for the inhomogeneous term $\mathcal{G}$ 
in \eqref{eq:Innerproblem-} 
with respect to the scaling kernel $Z$ is 
\begin{equation}\label{eq:orthogonalitycondition}
	\int_{\R^6}
	\left[ \lambda_0(t) \dot{\lambda}_0(t) Z(y) 
	+ 2\lambda_0(t)^2 U(y) \theta_A(0,t) \right] Z(y)\eta_{4R(t)}(y)  dy 
	=0. 
\end{equation}
Since $\theta_A(0,t)=A(t+1)^{-1}$ by \eqref{eq:SSSprofile} and $\Theta_A(0)=A$, 
it suffices that $\lambda_0$ satisfies 
\begin{equation}\label{eq:orthogonalitycondition_mu_0-}
	\dot{\lambda}_0(t)  
	=-\frac{2\int_{\R^6} U(y) Z(y)\eta_{4R(t)}(y) dy}
	{\int_{{\R^6}} Z(y)^2\eta_{4R(t)}(y) dy} 
	A(t+1)^{-1} \lambda_0(t). 
\end{equation}
Fix any positive solution of \eqref{eq:orthogonalitycondition_mu_0-}. 
Note that $\lambda_0$ is determined at this moment. 
The orthogonality condition \eqref{eq:orthogonalitycondition} determines 
the choice of $\lambda$ in Section \ref{sec:choice}, 
which guarantees the solvability of the inner problem \eqref{eq:Innerproblem-} 
in Section \ref{sec:inner}. 

To derive the asymptotics of $\lambda_0$, 
we claim that 
\begin{equation}\label{eq:asymptfrac}
\begin{aligned}
	-\frac{2\int_{\R^6} U(y)Z(y)\eta_{4R(t)}(y) dy}{\int_{\R^6}Z(y)^2 \eta_{4R(t)}(y) dy} 
	&=
	\frac{2\int_{\mathbf{R}^6} U(y)^2  dy}{\int_{\mathbf{R}^6}Z(y)^2 dy} 
	+  O(R(t)^{-2})\\
	&=
	\frac{5}{4}+ O(R(t)^{-2})
	\quad \mbox{ as } t\to \infty.\\
\end{aligned}
\end{equation}
Indeed, by $U(y) = ( 1+ |y|^2/24)^{-2}$, 
$Z(y) = 2U(y) + y\cdot \nabla U(y)$ and 
integration by parts, we can compute that for $r>0$, 
\[
\begin{aligned}
	&- \frac{2 \int_{B_r} U(y) Z(y) dy}{\int_{B_r}Z(y)^2 dy} 
	= \frac{2 \int_{\mathbf{R}^6} U(y)^2dy 
	- 2 \int_{\mathbf{R}^6\setminus B_r} U(y)^2dy
	-r \int_{\partial B_r} U(y)^2 dS}
	{ \int_{\mathbf{R}^6}Z(y)^2 dy
	- \int_{\mathbf{R}^6\setminus B_r}Z(y)^2 dy } \\
	&= \frac{2\|U\|_{L^2(\mathbf{R}^6)}^2 - 2 ( (24^4/2) \pi^3 r^{-2} + O(r^{-4}) )
	- r (24^4 \pi^3 r^{-3} + O(r^{-5})) }
	{\|Z\|_{L^2(\mathbf{R}^6)}^2 - 2\cdot 24^4 \pi^3r^{-2} + O(r^{-4})}, 
\end{aligned}
\]
where $dS$ is the surface area element. 
Since $\|U\|_{L^2(\mathbf{R}^6)}^2 = 24^3\pi^3 /6$ 
and $\|Z\|_{L^2(\mathbf{R}^6)}^2 = 4\times 24^3\pi^3/15$, 
we obtain \eqref{eq:asymptfrac}. 

By \eqref{eq:orthogonalitycondition_mu_0-} 
and \eqref{eq:asymptfrac}
we see that $\lambda_0$ must satisfy  
\[
	\dot{\lambda}_0(t) 
	= \Big(\frac{5A}{4}t^{-1}  
    +O(t^{-1}R(t)^{-2})\Big)\lambda_0(t)
\]
as $t\to \infty$. 
By $R(t)=(\log(e+t))^2$ in \eqref{eq:defR} and by taking $|A|\ll1$, 
\begin{equation}\label{eq:leadingmu_000-}
	\lambda_0(t) 
	\sim t^{\frac{5A}{4}}
	\ll \sqrt{t},
	\quad \dot{\lambda}_0(t) 
	\sim \frac{5A}{4}t^{-1+\frac{5A}{4}}, 
\end{equation}
for $t\gg1$.
Here and below, we write $|A|\ll1$ if $|A|$ is sufficiently small. 
We see from \eqref{eq:ansatz_mu_0-} 
that $\lambda$ also satisfies that 
\begin{equation}\label{eq:leadingmu_0-}
	\lambda(t) \sim t^{\frac{5A}{4}},
	\quad \dot{\lambda}(t) 
	\sim \frac{5A}{4}t^{-1+\frac{5A}{4}}
    \quad \mbox{ for } t\gg1.
\end{equation}
In particular, the separation assumption \eqref{eq:Ansatzmu-} is satisfied. 

\begin{remark}
\label{Remark:reason6D}
For general $n\ge 3$, one can observe from direct computations that 
$\mathcal{G}[\psi, \lambda]$ contains the terms 
\[
	\lambda \dot{\lambda}\left( \frac{n-2}{2} U(y) + y \cdot \nabla U(y) \right)
	+ \frac{n+2}{n-2}\lambda^\frac{n-2}{2}U(y)^\frac{4}{n-2} 
	\theta_A(\lambda y, t). 
\]
Then, the condition for $\lambda_0$ from the orthogonality is reduced to 
\[
	\dot{\lambda}_0 \sim C_n \lambda_0^\frac{n-4}{2} t^{-\frac{n-2}{4}} 
	\quad \mbox{ for } t\gg1,
\]
where $C_n>0$ is a suitable constant.
If $n\neq 6$, whatever $C_n$ may be, one has
\[
	\lambda_0(t) \sim \sqrt{t} 
	\quad \mbox{ for } t\gg1. 
\]
This implies that the matching region collapses 
compared with \eqref{eq:leadingmu_0-}, 
and so our construction works only for $n=6$. 
\end{remark}

Up to this point, $\lambda_0$ is already fixed, 
but $\mu$ remains to be determined. 
In Section \ref{sec:choice}, 
for given $\psi$ in an appropriate function space, 
we construct a suitable $\mu=\mu_\psi$ and 
a modulation parameter $\lambda=\lambda_\psi$. 
Then, by using the given $\psi$ and the constructed $\lambda_\psi$, 
we construct a solution $\phi=\phi_\psi$ of 
the inner problem \eqref{eq:Innerproblem-} 
in Section \ref{sec:inner}. 
Consequently, the outer problem \eqref{eq:Outerproblem-} 
becomes a single equation 
$\partial_t \psi - \Delta_x \psi = \mathcal{H}[\phi_\psi, \psi, \lambda_\psi]$ 
for $\psi$. 
Finally, we solve this equation 
in Section \ref{sec:outer}. 

To carry out the above strategy, we set up function spaces as follows. 
Let $t_0\gg1$ and $t_0<\tau< \infty$. 
For $\lambda=\lambda_0+\mu$, we set 
\begin{equation}\label{eq:assmpfor_mu1_xi-}
	\Lambda^\tau:= \left\{ 
	\lambda(t); \; \lambda=\lambda_0+\mu>0, 
	\; \mu  \in C^1([t_0,\tau]), \; 
	|\mu| \le \frac{\lambda_0}{9}, \; 
	|\dot \mu| \le \frac{|\dot{\lambda}_0|}{9}
	\right\}, 
\end{equation}
where $\lambda_0\in C^1([t_0,\infty))$ 
is given by \eqref{eq:orthogonalitycondition_mu_0-} 
and each $\lambda\in \Lambda^\tau$ satisfies \eqref{eq:leadingmu_0-}. 
For each $\lambda\in \Lambda^\tau$, we define 
\begin{equation}
\label{eq:defoftau-}
	\sigma(t) = \sigma_\lambda(t) := \int_{t_0}^t \frac{ds}{\lambda(s)^2}
	+ \frac{t_0}{\lambda_0(t_0)^2} \quad \mbox{ for } t_0\leq t\leq \tau. 
\end{equation}
We note from \eqref{eq:leadingmu_0-} that  
\begin{equation}\label{eq:behabioroftau-}
	\sigma(t) \sim t^{1- \frac{5A}{2}}, 
\end{equation}
where $1- 5A/2>0$ by $|A|\ll1$. 
From $R(t)=(\log(e+t))^2$ in \eqref{eq:defR}, 
it follows that
\begin{equation}
\label{eq:defoftauA-}
R(t) \le 2R(\sigma(t))\quad \mbox{ for } t_0\leq t\leq \tau. 
\end{equation}
Note that the choice of $A$ and $t_0$ can be determined independently of $\mu$.
Fix $1/2<a_1<1$. 
Define 
\begin{equation}\label{eq:spaceforouter-}
\begin{aligned}
	&B_{\rm out}^\tau := \left\{ \psi\in X_{\rm out}^\tau ; \; 
	\begin{aligned}
	&|\psi| \leq |A| w_{\rm out}, \; 
	|\nabla\psi| \leq |A| t^{-1}\lambda_0(t)^{-1} R(t)^{-1-a_1}\\ 
	&\mbox{ for }x \in \mathbf{R}^6, \; t_0\leq t\leq \tau
	\end{aligned}
	\right\}, \\
	&X_{\rm out}^\tau := \left\{ \psi(x,t);\; 
	\begin{aligned}
	&\mbox{$\psi(\cdot,t)$ is radially symmetric for each $t_0\leq t\leq \tau$,} \\
	&\psi \in C^{1,0}(\mathbf{R}^6\times[t_0,\tau]), 
	\; \|\psi\|_{\rm out}<\infty
	\end{aligned}
	\right\}, \\
	&\|\psi\|_{\rm out} 
	:= \sup_{x \in \mathbf{R}^6, \; t_0\leq t\leq \tau } |\psi(x,t)| 
	+ \sup_{x \in \mathbf{R}^6, \; t_0\leq t\leq \tau } |\nabla \psi(x,t)|, \\
	&w_{\rm out}(x,t) := t^{-1} R(t)^{-a_1}
	\left[\mathbf{1}_{|x| \le \sqrt{t}} 
	+ t|x|^{-2} \mathbf{1}_{|x| \ge \sqrt{t}}\right]. 
\end{aligned}
\end{equation}
We remark that $B_{\rm out}^\tau$ depends on $\lambda_0$ 
but not on  $\lambda$ 
and that $B_{\rm out}^\tau$ is a closed and convex subset of 
a Banach space $X_{\rm out}^\tau$.

\section{Choice of parameters}\label{sec:choice}
Throughout this section, we fix $\psi\in B_{\rm out}^\tau$. 
We determine an appropriate modulation parameter 
$\lambda$  to satisfy 
the following orthogonality 
condition \eqref{eq:orthogonalitycondition_for_ourproblem}. 
This will be done by solving an  ODE, the so called
modulation equation, based on 
the contraction mapping theorem. 
Throughout this section, we continue to fix 
$1/2<a_1<1$ 
in the definition of $B_{\rm out}^\tau$ in \eqref{eq:spaceforouter-}.
The main result in this section is as follows.

\begin{proposition}\label{Proposition:SolvabilityforOurinner}
Assume that $\tau > t_0 \gg1$, $1/2<a_1<1$ and $A\neq 0$ with $|A|\ll1$. 
For each $\psi\in B_{\rm out}^\tau$, 
there exists a unique $\lambda_\psi \in \Lambda^\tau$  
such that 
\begin{equation}\label{eq:orthogonalitycondition_for_ourproblem}
	\int_{\R^6}  \mathcal{G}[\psi, \lambda_\psi](y,t)Z(y)\eta_{4R(t)}(y) dy =0
	\quad \mbox{ for }t_0<t<\tau
\end{equation}
and  $\mu_\psi:= \lambda_\psi-\lambda_0$ satisfies
\begin{equation}\label{eq:assmpfor_mu1_xi2}
	|\mu_\psi|\leq |A|\lambda_0, 
	\quad |\dot{\mu}_\psi| \lesssim  |\dot{\lambda}_0| 
	\quad \mbox{ for }t_0<t<\tau, 
\end{equation}
where $\mathcal{G}$ is defined by \eqref{eq:defofH-}, 
$Z$ is defined by \eqref{eq:kernels} and 
$\lambda_0$ is given by \eqref{eq:orthogonalitycondition_mu_0-}. 
\end{proposition}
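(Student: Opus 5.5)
We write the orthogonality condition \eqref{eq:orthogonalitycondition_for_ourproblem} out explicitly. With $\lambda=\lambda_0+\mu$ and the notation
\[
	c_1(t):=\int_{\R^6} Z^2\eta_{4R(t)}\,dy, \qquad c_2(t):=\int_{\R^6} UZ\eta_{4R(t)}\,dy,
\]
the condition, after dividing by $\lambda$, reads
\[
	\dot\lambda\, c_1(t) + 2\lambda\int_{\R^6} U(y)\bigl[\theta_A(\lambda y,t)+\psi(\lambda y,t)\bigr]Z(y)\eta_{4R}(y)\,dy = 0 .
\]
We split $\theta_A(\lambda y,t)=\theta_A(0,t)+[\theta_A(\lambda y,t)-\theta_A(0,t)]$. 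The relation \eqref{eq:orthogonalitycondition_mu_0-} already cancels the contribution of $\theta_A(0,t)$ for $\lambda_0$. Subtracting that relation yields a linear ODE for $\mu$ with a nonlinear perturbation:
\[
	\dot\mu = \beta(t)\mu + F[\mu](t), \qquad \beta(t):=-\frac{2c_2(t)}{c_1(t)}A(t+1)^{-1}=\Bigl(\frac{5A}{4}+O(R^{-2})\Bigr)(t+1)^{-1},
\]
\[
	F[\mu]:=-\frac{2\lambda}{c_1}\int_{\R^6} U\bigl[\theta_A(\lambda y,t)-\theta_A(0,t)+\psi(\lambda y,t)\bigr]Z\eta_{4R}\,dy .
\]
Since $\beta$ is exactly the rate of $\lambda_0$, the homogeneous solutions are multiples of $\lambda_0$. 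We therefore seek $\mu=\lambda_0 h$. Then $\dot h=F[\lambda_0 h]/\lambda_0$, and we define the map
\[
	\mathcal T[h](t):=\int_{t_0}^t \frac{F[\lambda_0h](s)}{\lambda_0(s)}\,ds, \qquad h(t_0)=0 .
\]
Uniqueness will be understood within this class, i.e.\ with the initial normalization $\mu(t_0)=0$.

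The key step is estimating $F$ using $\psi\in B^\tau_{\rm out}$ and the smoothness of $\theta_A$.
\begin{itemize}
\item[(i)] \emph{The $\theta_A$ term.} Radial smoothness of $\Theta_A$ with $\Theta_A'(0)=0$ and $|\Theta_A''|\lesssim|A|$ (by the ODE, for small $A$) gives
\[
	|\theta_A(\lambda y,t)-\theta_A(0,t)|\lesssim |A|\lambda^2|y|^2t^{-2}.
\]
Against $UZ|y|^2\sim|y|^{-6}$, integrated over $B_{8R}$ in six dimensions, this produces a factor $\log R$. The contribution to $F/\lambda_0$ is therefore $\lesssim|A|\lambda_0^2t^{-2}\log R$, which is integrable in $t$ because $\lambda_0^2\sim t^{5A/2}\ll t$. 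Its integral from $t_0$ is $\lesssim |A|\, t_0^{-1+5A/2}\log\log t_0$, which is small.
\item[(ii)] \emph{The $\psi$ term.} We write $\psi(\lambda y,t)=\psi(0,t)+O(|\nabla\psi|\lambda|y|)$. The gradient part is bounded by
\[
	|A|t^{-1}\lambda_0^{-1}R^{-1-a_1}\lambda\int_{B_{8R}}|y||U Z|\,dy\lesssim|A|t^{-1}R^{-a_1}.
\]
The part $|\psi(0,t)|\le |A|t^{-1}R^{-a_1}$ gives the same bound, since $UZ\in L^1$.
\end{itemize}
Both contributions to $F/\lambda_0$ are thus of size $|A|\,t^{-1}(\log t)^{-2a_1}$, which is integrable precisely because $2a_1>1$. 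This is where the hypothesis $a_1>1/2$ enters: it gives
\[
	\Bigl|\int_{t_0}^t F/\lambda_0\,ds\Bigr|\lesssim |A|(\log t_0)^{1-2a_1}\le |A|/2
\]
for $t_0\gg1$.

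For the self-map property, these bounds show that $\mathcal T$ maps the ball $\{\|h\|_{L^\infty}\le|A|\}$ into itself. The derivative bound also follows:
\[
	|\dot\mu|\le|\dot\lambda_0||h|+\lambda_0|\dot h|\lesssim|\dot\lambda_0|+|A|\lambda_0t^{-1}R^{-a_1}\lesssim|\dot\lambda_0|,
\]
using $|\dot\lambda_0|\sim |A|\lambda_0 t^{-1}$. For $|A|$ small this gives $|\mu|\le\lambda_0/9$ and $|\dot\mu|\le|\dot\lambda_0|/9$, so $\lambda_\psi\in\Lambda^\tau$.

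For the contraction, we estimate $F[\lambda_0h_1]-F[\lambda_0h_2]$. It is Lipschitz in $\lambda$ with constant bounded by $\lambda_0^{-1}$ times the same integrable quantities. This uses the $y$-derivatives of $\theta_A(\lambda y,t)$ and $\psi(\lambda y,t)$ with respect to $\lambda$, namely $|y||\nabla\theta_A|$ and $|y||\nabla\psi|$, which are controlled in the same way as (i) and (ii). The resulting Lipschitz constant is $\lesssim|A|(\log t_0)^{1-2a_1}+o(1)<1/2$. The main obstacle I expect is precisely this borderline integrability: the $\psi$-contribution decays only like $t^{-1}(\log t)^{-2a_1}$, and the truncation $\eta_{4R}$ costs powers of $R$. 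One must check that the $R$-losses (at most $\log R$) do not destroy the $(\log t)^{-2a_1}$ gain, which is why $R=(\log(e+t))^2$ and the gradient weight $R^{-1-a_1}$ in $B^\tau_{\rm out}$ are calibrated as they are. Banach's fixed point theorem then yields the unique $\lambda_\psi$.
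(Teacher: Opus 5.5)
Your proposal is correct and is essentially the paper's argument. Since $\dot\lambda_0=\beta\lambda_0$ gives $e^{\int_s^t\beta}=\lambda_0(t)/\lambda_0(s)$, your substitution $\mu=\lambda_0h$ is exactly the paper's integrating factor, and $\lambda_0\mathcal T[h]$ coincides with the paper's $\mathcal S[\mu]$ on the same ball $B_{\rm sc}^\tau$. In both arguments, integrability of $|A|t^{-1}(\log t)^{-2a_1}$, i.e.\ $2a_1>1$, drives the self-map and contraction.

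Your explicit normalization $\mu(t_0)=0$ is the correct reading of the uniqueness claim. The paper's $\mathcal S[\mu](t_0)=0$ builds it in implicitly, whereas prescribing a small nonzero $h(t_0)$ would give further solutions in $\Lambda^\tau$. Your second-order Taylor bound for $\theta_A$ is just a sharper variant of the paper's mean-value estimate using $|\nabla\theta_A|\lesssim|A|t^{-3/2}$.
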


To prove Proposition \ref{Proposition:SolvabilityforOurinner}, 
we observe the left-hand side 
of \eqref{eq:orthogonalitycondition_for_ourproblem}. 
From \eqref{eq:defofH-} and \eqref{eq:kernels}, it follows that 
\[
\begin{aligned}
	\int_{\R^6} \mathcal{G} Z\eta_{4R(t)}(y) dy 
	&= \lambda \dot{\lambda} 
	\int_{\R^6} Z(y)^2 \eta_{4R(t)}(y) dy \\
	&+ 2\lambda^2 
	\int_{\R^6}
	( \theta_A(\lambda y, t)  
	+\psi(\lambda y,t) ) U(y) Z(y)\eta_{4R(t)}(y) dy. 
\end{aligned}
\]
By \eqref{eq:orthogonalitycondition_mu_0-}, 
the desired $\lambda$ will be constructed 
once we solve the following ODE: 
\[
\begin{aligned}
	&\dot{\mu}(t) 
	=  \beta(t) \mu(t)
	+ \tilde {\mathcal{F}}[\mu](t), \quad 
	\beta(t):=-\frac{2\int_{\R^6} U Z\eta_{4R} dy}
	{\int_{\R^6} Z^2\eta_{4R} dy} \theta_A(0,t)  \\
	&\tilde {\mathcal{F}}_\psi[\mu](t):= 
	\frac{- 2\int_{\R^6}
	( \theta_A(\lambda y, t) - \theta_A(0, t) 
	+\psi(\lambda y,t) ) U Z\eta_{4R} dy}
	{\int_{\R^6} Z^2 \eta_{4R}dy} \lambda(t), 
\end{aligned}
\]
with $\lambda=\lambda_0+\mu$ in the right-hand side of $\tilde{{\mathcal{F}}}_\psi$. 

We set up a fixed point problem for $\mu$. 
Define an operator $\mathcal{S}_\psi$ by 
\begin{equation}\label{eq:Spsidef}
	\mathcal{S}_{\psi}[\mu](t) :=
    \int_{t_0}^t \tilde {\mathcal{F}}_{\psi}[\mu](s) 
	e^{ \int_s^t \beta(\tau) d\tau} ds
\end{equation}
Here and below, we suppress the $\psi$-dependence of 
$\tilde {\mathcal{F}}_\psi$ and $\mathcal{S}_{\psi}$ for simplicity of notation. 
If we find a fixed point of $\mu=\mathcal{S}[\mu]$, 
then we obtain a solution $\mu$ 
of the following modulation equation:  
\begin{equation}\label{eq:modeq}
	\dot{\mu}(t) =  \beta(t) \mu(t)+ \tilde {\mathcal{F}}[\mu](t) \quad 
	\mbox{ for }t_0<t<\tau,
\end{equation}
where $t_0\gg1$. 
Moreover, $\lambda$ is also determined by $\lambda=\lambda_0+\mu$. 
In this way, the desired modulation parameter $\lambda$ 
will be constructed. 

To construct the desired fixed point of $\mathcal{S}$ 
by the contraction mapping theorem, 
we introduce a space
$B_{\rm sc}^\tau$ 
for  the modulation parameter $\mu$. 
Define 
\begin{equation}\label{eq:scdef}
\begin{aligned}
	&B_{\rm sc}^\tau := \{f \in C([t_0, \tau];\mathbf{R}); \|f\|_{\rm sc} \le |A|\}, \\
	&\|f\|_{\rm sc} 
	:= \sup_{t_0 \leq t\leq \tau} \lambda_0(t)^{-1} |f(t)|.
\end{aligned} 
\end{equation}

We first prepare  an estimate of a forward self-similar solution $\theta_A$. 

\begin{lemma}\label{thm:Psi_two_sided}
Let $n=6$ and $A\neq0$ with $|A|\ll1$.
Then $\theta_A$ satisfies
\begin{align}
	&\label{eq:SSS}
	\theta_A(x,t) \sim A \left[ t^{-1} \mathbf{1}_{|x| \le \sqrt{t}}
+ |x|^{-2}\mathbf{1}_{|x| > \sqrt{t}}\right],\\
    &\label{eq:SSS2}
    |\nabla \theta_A(x,t)| \lesssim |A|t^{-\frac{3}{2}},
\end{align}
for all $x \in \mathbf{R}^6$ and $t>1$.
\end{lemma}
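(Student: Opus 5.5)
The approach is to reduce everything to the radial profile ODE and treat $\Theta_A$ as a small perturbation of the explicit linear profile. Since $\theta_A(x,t)=(t+1)^{-1}\Theta_A(|x|/\sqrt{t+1})$ and $(t+1)\sim t$ for $t>1$, \eqref{eq:SSS} is equivalent to
\[
\Theta_A(r)\sim A\min\{1,r^{-2}\}, \qquad r\ge 0,
\]
with constants uniform in small $A$. Because $\nabla\theta_A=(t+1)^{-3/2}\Theta_A'(|x|/\sqrt{t+1})\,x/|x|$, \eqref{eq:SSS2} is equivalent to $\sup_{r\ge0}|\Theta_A'(r)|\lesssim|A|$. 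The profile solves
\[
\Theta''+\Big(\frac5r+\frac r2\Big)\Theta'+\Theta+|\Theta|\Theta=0,\qquad \Theta(0)=A,\ \Theta'(0)=0.
\]
Set $\Theta_A=Aw$. Then $w$ solves the same ODE with the nonlinearity replaced by $|A|\,|w|w$, and $w(0)=1$, $w'(0)=0$.

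\textbf{Step 1: the limit $A\to0$.} I would identify the limit profile $w_0$, which solves the linear equation $w''+(5/r+r/2)w'+w=0$ with $w_0(0)=1$. With $z=r^2/4$ this is Kummer's equation, and $w_0(r)=M(1,3,-z)=2(z-1+e^{-z})/z^2$. This function is positive and decreasing, and satisfies $w_0\sim\min\{1,r^{-2}\}$ with $w_0(r)=8r^{-2}+O(r^{-4})$. Moreover $|w_0'|\lesssim\min\{r,r^{-3}\}$.

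\textbf{Step 2: uniform closeness of $w$ to $w_0$.} The linear ODE in divergence form is $(r^5e^{r^2/4}w')'=-r^5e^{r^2/4}w$, so
\[
w'(r)=-r^{-5}e^{-r^2/4}\int_0^r s^5e^{s^2/4}\,w(s)\big(1+|A||w(s)|\big)\,ds .
\]
I would run a continuity (bootstrap) argument for $d:=w-w_0$. Subtracting the identities for $w$ and $w_0$ gives an integral equation for $d$ via variation of constants. The two linear solutions are $w_0$ and a second solution decaying like $r^{-4}e^{-r^2/4}$, so the Wronskian weight is $r^5e^{r^2/4}$. The forcing is $|A|\,|w|w$, which under the bootstrap hypothesis $|w|\le 2w_0$ is bounded by $C|A|\min\{1,r^{-4}\}$.

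Applying the variation-of-constants kernel to this forcing should yield $|d(r)|\le C|A|\min\{1,r^{-2}\}$. The $r^{-2}$ tail arises because the $w_0$-component of the solution picks up a convergent constant times $w_0$. The key elementary estimate is $\int_0^r s^{k}e^{s^2/4}ds\sim r^{k-1}e^{r^2/4}$ for large $r$. With it, the formula for $w'$ also gives $|w'(r)|\lesssim\min\{r,r^{-3}\}$, and in particular $|\Theta_A'|\lesssim|A|$, which proves \eqref{eq:SSS2}.

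For $|A|$ small, $C|A|\le \tfrac12\inf_r w_0(r)/\min\{1,r^{-2}\}$, so the bootstrap closes. This gives $\tfrac12w_0\le w\le 2w_0$ on $[0,\infty)$, hence \eqref{eq:SSS}, and also recovers that the sign of $\ell_A$ is the sign of $A$ with $\ell_A=A(8+O(|A|))$.

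\textbf{Main obstacle.} The delicate part is making Step 2 uniform on the whole half-line rather than on compact sets. On $[0,r_1]$, continuous dependence on parameters suffices. On $[r_1,\infty)$ one must verify that the variation-of-constants operator maps $\min\{1,r^{-4}\}$-size forcing into $\min\{1,r^{-2}\}$-size output, with no loss coming from the growing weight $e^{r^2/4}$. I would first try proving this with the explicit Kummer second solution. If that becomes messy, a fallback is the Duhamel route: realize $\theta_A$ as the small-data solution with initial datum $\ell_A|x|^{-2}$, using that $e^{t\Delta}|x|^{-2}\sim\min\{t^{-1},|x|^{-2}\}$. This would be combined with Naito's result that $\ell_A\to0$ and with uniqueness of the regular profile.
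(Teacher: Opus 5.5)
Your proposal is correct. It runs parallel to the paper's proof but organizes the perturbation differently.

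\textbf{The paper's route.} The paper uses the same linear profile: its $\tilde\Theta=e^{-r^2/4}M(2,3;r^2/4)$ equals your $w_0=M(1,3,-r^2/4)$ by Kummer's transformation. It reduces to $A>0$ by oddness and writes $\Theta_A=h\tilde\Theta$ multiplicatively, i.e.\ by reduction of order, which gives $(r^5e^{r^2/4}\tilde\Theta^2h')'=-r^5e^{r^2/4}\tilde\Theta\,\Theta_A^2$. Hence $h$ is nonincreasing, so the upper bound $\Theta_A\le A\tilde\Theta$ holds with no smallness and no bootstrap. Feeding this back into the double-integral formula gives $h\ge A-A^2D(r)$ with $\sup D<\infty$, which is the lower bound. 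The gradient bound is read off the same first-order formula you wrote, using only $\Theta_A\lesssim A$.

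\textbf{What your route buys.} Your additive splitting $d=w-w_0$, with a two-solution Green's function and a continuity argument, has several advantages:
\begin{itemize}
\item it treats both signs of $A$ at once;
\item it produces positivity of $\Theta_A$ as an output, whereas the paper's identity, written with $\Theta_A^2$ in place of $|\Theta_A|\Theta_A$, takes $\Theta_A>0$ for granted;
\item it gives extra information such as $\ell_A=A(8+O(|A|))$ and $|\Theta_A'|\lesssim|A|\min\{r,r^{-3}\}$.
\end{itemize}
Its cost is that you need the second linear solution, which the ratio trick never uses.

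\textbf{The obstacle you flag does not arise.} That cost is negligible here. Since $U(2,3,z)=z^{-2}$, the second solution is exactly $w_2=16r^{-4}e^{-r^2/4}$, so $w_2(s)s^5e^{s^2/4}=16s$. The $w_0$-component of $d$ is then bounded by $C|A|\,w_0(r)\int_0^\infty s\min\{1,s^{-4}\}\,ds$. The $w_2$-component is bounded by $C|A|\min\{r^2,r^{-6}\}$. Both bounds hold uniformly on $[0,\infty)$, so you need neither the compact/noncompact splitting nor the Duhamel fallback.
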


\begin{proof}
It suffices to consider the case $A>0$,
since $\theta_A = -\theta_{|A|}$ when $A<0$. 
We recall that the profile $\Theta_A$ satisfies 
\begin{equation}
\label{eq:Theta_IVP}
\left\{
\begin{aligned}
	&\Theta_A'' + \frac{5}{r} \Theta_A' + \frac{r}{2}\Theta_A' 
	+ \Theta_A + |\Theta_A|\Theta_A=0,\quad  r>0, \\
	&\Theta_A(0)=A,\;  \Theta_A'(0)=0.  
\end{aligned}
\right.
\end{equation}
By introducing a linear operator
\[
\mathcal{L}f:=f''+\left(\frac{5}{r}+\frac{r}{2}\right)f' + f, 
\]
we have $\mathcal{L}\Theta_{A}+\Theta_{A}^2=0$.
Let $\tilde{\Theta}$ be the solution of
\[
	\mathcal{L}\tilde{\Theta} =0 \quad  \mbox{ for }r>0, 
	\quad 
	\tilde{\Theta}(0)=1,
	\quad 
	\tilde{\Theta}'(0)=0.  
\]
We see from \cite{SW19} that 
$\tilde{\Theta}$ exists and  can be represented as
$\tilde{\Theta}(r) =  e^{-r^2/4} 
M(2, 3; r^2/4)$,
where $M(\cdot, \cdot; \cdot)$ is Kummer's confluent hypergeometric function.
By \cite[Lemma A.1(ii)]{SW19}, 
we have $0< \tilde{\Theta}(r) \sim  (1+r)^{-2}$
for all $r\ge0$. 

First, let us prove \eqref{eq:SSS}. 
We introduce a ratio function $h(r)$ by
\[
	\Theta_A(r) = h(r)\tilde{\Theta}(r).
\]
Since $\Theta_A(0)=A$, we have $h(0)=A$. 
By $\mathcal{L}\Theta_{A}+\Theta_{A}^2=0$ and $\mathcal{L}\tilde{\Theta}=0$, 
\[
    \left( r^{5} e^{\frac{r^2}{4}} \tilde{\Theta}(r)^2 h'(r) \right)' = - r^{5} e^{\frac{r^2}{4}} \tilde{\Theta}(r) \Theta_A(r)^{2}.
\]
Integrating twice and using $h'(0)=0$ yield 
\begin{equation}\label{eq:hformula}
    h(r) = A 
    - \int_0^r \frac{e^{-\frac{\rho^2}{4}}}{\rho^{5}\tilde{\Theta}(\rho)^2}
    \left[ \int_0^\rho s^{5}e^{\frac{s^2}{4}} 
    \tilde{\Theta}(s) \Theta_A(s)^{2}  ds \right] d\rho.
\end{equation}
Since the integrand behaves like $\rho^2$ near $\rho=0$
and is positive, 
the integral on the right-hand side is well-defined for $r>0$ and 
$r\mapsto h(r)$ is nonincreasing. 
Therefore, $h(r) \le A$ for $r\ge0$.
This implies that
\begin{equation}\label{eq:SSSupper}
	\Theta_A(r) = h(r)\tilde{\Theta}(r)\le A\tilde{\Theta}(r)
	\lesssim A (1+ r)^{-2} \quad \mbox{ for } r\ge0.
\end{equation}
Since $0<\Theta_A\le A\tilde{\Theta}$, 
we have $\Theta_A^2\le A^2\tilde{\Theta}^2$. 
Then by \eqref{eq:hformula}, 
\[
	h(r)\ge A-A^p\,D(r),\quad 
	D(r):=\int_0^r \frac{e^{-\frac{\rho^2}{4}}}{\rho^{5}\tilde{\Theta}(\rho)^2}
    \left[ \int_0^\rho s^{5}e^{\frac{s^2}{4}} \tilde{\Theta}(s)^{3}   ds \right] d\rho.
\]

We claim that $\sup_{r\ge 0}D(r)<\infty$. 
From $\tilde{\Theta}(s)^{3}\lesssim (1+s)^{-6}$, it follows that 
\[
\int_0^\rho s^{5}e^{\frac{s^2}{4}} \tilde{\Theta}(s)^{3}   ds \lesssim
\int_1^\rho s^{-1}e^{\frac{s^2}{4}}ds \lesssim\rho^{-2}e^{\frac{\rho^2}{4}} 
\quad \mbox{ for } \rho\gg1, 
\]
where we used the integration by parts.
On the other hand,
\[
\frac{e^{-\frac{\rho^2}{4}}}{\rho^5\tilde{\Theta}(\rho)^2}\ \lesssim\ e^{-\frac{\rho^2}{4}}\rho^{-5}(1+\rho)^{4} \lesssim e^{-\frac{\rho^2}{4}}\rho^{-1}.
\]
Then, these estimates show the claim. Consequently, 
\[
	h(r)\ge A-CA=A\big(1-CA\big)\quad \mbox{ for } r\ge 0.
\]
Multiplying by $\tilde{\Theta}$ and choosing $A>0$ 
so small that $1-CA\ge 1/2$, we obtain
\begin{equation}
\label{eq:SSSlower}
	\Theta_A(r) = h(r)\tilde{\Theta}(r) 
	\gtrsim A\tilde{\Theta}(r)\gtrsim A(1+r)^{-2} 
	\quad \mbox{ for } r\ge0.
\end{equation}
By combining \eqref{eq:SSSupper} and \eqref{eq:SSSlower}, 
for $x\in\mathbf{R}^6$ and $t>1$, 
\[
\theta_A(x,t) = t^{-1}\Theta_A\left( \frac{r}{\sqrt{t}}\right)
\sim At^{-1}\left(1+ \frac{r}{\sqrt{t}}\right)^{-2}
\sim A \left[ t^{-1} \mathbf{1}_{|x| \le \sqrt{t}}
+ |x|^{-2}\mathbf{1}_{|x| > \sqrt{t}}\right]. 
\]

Next, we prove \eqref{eq:SSS2}.
By \eqref{eq:Theta_IVP} and \eqref{eq:SSS}, for sufficiently small $A>0$,
\[
\begin{aligned}
|\Theta_A'(r)| 
&\le r^{-5}e^{-\frac{r^2}{4}} \int_0^r s^5 e^\frac{s^2}{4} (\Theta_A(s) + \Theta_A(s)^2) ds\\
&\lesssim (A+A^2) r^{-5}e^{-\frac{r^2}{4}} \int_0^r s^5 e^\frac{s^2}{4}  ds 
\lesssim A, 
\end{aligned}
\]
and so 
\[
|\nabla \theta_A(x,t)| \le t^{-\frac{3}{2}} \left|\Theta_A'\left(\frac{|x|}{\sqrt{t}} \right) \right|
\lesssim A  t^{-\frac{3}{2}}. 
\]
Then the lemma follows. 
\end{proof}

Next, we check that $\mathcal{S}$ is a contraction mapping from $B_{\rm sc}^\tau$ 
into itself. 

\begin{lemma}\label{lem:selfcS1}
Assume that $\tau > t_0 \gg1$.
If $|A|\ll1$, 
then $\mathcal{S}$ is a contraction mapping from $B_{\rm sc}^\tau$ 
into itself. 
\end{lemma}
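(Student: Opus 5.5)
We estimate $\tilde{\mathcal F}[\mu]$ pointwise for $\mu\in B_{\rm sc}^\tau$, combine this with the behaviour of the integrating factor $e^{\int_s^t\beta}$, and then run the same argument on differences $\mu_1-\mu_2$.

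\emph{Integrating factor.} By \eqref{eq:asymptfrac} and $\theta_A(0,t)=A(t+1)^{-1}$, we have $\beta(t)=\frac{5A}{4}(t+1)^{-1}+O(|A|t^{-1}R(t)^{-2})$. Since $\int^\infty t^{-1}R(t)^{-2}\,dt<\infty$, this gives $e^{\int_s^t\beta}\sim \lambda_0(t)/\lambda_0(s)$ uniformly in $t_0\le s\le t$. This is consistent with $\lambda_0$ solving \eqref{eq:orthogonalitycondition_mu_0-}, whose coefficient is exactly $\beta$. Consequently
\[
\lambda_0(t)^{-1}|\mathcal S[\mu](t)|\lesssim \int_{t_0}^t \lambda_0(s)^{-1}|\tilde{\mathcal F}[\mu](s)|\,ds .
\]
So it suffices to show $\lambda_0(s)^{-1}|\tilde{\mathcal F}[\mu](s)|\lesssim |A|\,s^{-1}R(s)^{-a_1}$ with a small constant. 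The right-hand side is integrable because $R(s)^{-a_1}=(\log(e+s))^{-2a_1}$ and $2a_1>1$. Hence its integral from $t_0$ to $\infty$ is $\lesssim(\log t_0)^{1-2a_1}$, which is small for $t_0\gg1$.

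\emph{Bound on $\tilde{\mathcal F}$.} Since $|\mu|\le |A|\lambda_0$, we have $\lambda\sim\lambda_0$. The denominator $\int Z^2\eta_{4R}$ is bounded below. We treat the two pieces of the numerator separately.
\begin{itemize}
\item For $\theta_A(\lambda y,s)-\theta_A(0,s)$, use \eqref{eq:SSS2}: this difference is $\lesssim |A|s^{-3/2}\lambda|y|$. Since $|UZ|\lesssim(1+|y|)^{-8}$, the weight $|y|(1+|y|)^{-8}$ is integrable on $\mathbf R^6$ with room to spare. The contribution is therefore $\lesssim |A|s^{-3/2}\lambda_0^2$, and dividing by $\lambda_0$ and using $\lambda_0\ll\sqrt s$ gives $\lesssim |A|s^{-1}(\lambda_0/\sqrt s)$, which is more than enough.
\item For $\psi$, use $|\psi|\le |A|w_{\rm out}\le |A|s^{-1}R(s)^{-a_1}$ together with $\int|UZ|<\infty$. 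This gives a contribution $\lesssim |A|s^{-1}R^{-a_1}\lambda$, which is exactly the needed rate.
\end{itemize}
Summing, $\|\mathcal S[\mu]\|_{\rm sc}\le C|A|\big((\log t_0)^{1-2a_1}+o(1)\big)\le|A|$ for $t_0\gg1$. Continuity of $\mathcal S[\mu]$ in $t$ is clear, so $\mathcal S$ maps $B_{\rm sc}^\tau$ into itself.

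\emph{Contraction.} This is the main obstacle, because $\tilde{\mathcal F}$ depends on $\mu$ both through the prefactor $\lambda$ and inside the arguments $\theta_A(\lambda y,s)$ and $\psi(\lambda y,s)$. Write $\lambda_i=\lambda_0+\mu_i$. For the prefactor, the difference is $(\mu_1-\mu_2)$ times the same integrals as above, so it is bounded by $|\mu_1-\mu_2|\,|A|s^{-1}R^{-a_1}$. For the arguments, apply the mean value theorem in $\lambda$.
\begin{itemize}
\item For the $\theta_A$ term, $|\nabla\theta_A|\lesssim |A|s^{-3/2}$ gives $|A|s^{-3/2}|\mu_1-\mu_2|\,|y|$.
\item For the $\psi$ term, the bound $|\nabla\psi|\le |A|s^{-1}\lambda_0^{-1}R^{-1-a_1}$ from $B_{\rm out}^\tau$ gives $|A|s^{-1}\lambda_0^{-1}R^{-1-a_1}|\mu_1-\mu_2|\,|y|$. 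After multiplying by $\lambda$ this is $\lesssim |A|s^{-1}R^{-1-a_1}|\mu_1-\mu_2|\int|y||UZ|$.
\end{itemize}
The $\eta_{4R}$ cutoff keeps $|y|\le 8R$, but we do not even need this since $|y||UZ|$ is integrable. Altogether
\[
\lambda_0^{-1}|\tilde{\mathcal F}[\mu_1]-\tilde{\mathcal F}[\mu_2]|\lesssim |A|s^{-1}R(s)^{-a_1}\|\mu_1-\mu_2\|_{\rm sc}.
\]
Integrating exactly as in the self-map step yields $\|\mathcal S[\mu_1]-\mathcal S[\mu_2]\|_{\rm sc}\le \tfrac12\|\mu_1-\mu_2\|_{\rm sc}$ for $|A|\ll1$ and $t_0\gg1$, which proves the contraction.
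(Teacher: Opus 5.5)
Your proposal is correct and follows essentially the same route as the paper: the mean value theorem, \eqref{eq:SSS2} and the $B_{\rm out}^\tau$ bounds give $\lambda_0^{-1}|\tilde{\mathcal F}[\mu](s)|\lesssim |A|s^{-1}R(s)^{-a_1}$; the integrating factor is controlled through $\beta$ (in fact $e^{\int_s^t\beta}=\lambda_0(t)/\lambda_0(s)$ exactly); $s^{-1}R(s)^{-a_1}$ is integrable because $2a_1>1$; and the contraction comes from the same estimate combined with the gradient bound on $\psi$. Your only deviations are cosmetic: you integrate $|y||UZ|$ over $\mathbf{R}^6$ instead of using $|y|\le 8R$ on the support of $\eta_{4R}$, and you write out the difference estimate that the paper leaves to the reader.
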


\begin{proof}
Let $\mu\in B_{\rm sc}^\tau$. 
We note that $\|\mu\|_{\rm sc}\leq |A|$. 
By using \eqref{eq:scdef} 
and by $|A|\ll 1$, $|\mu| \leq  |A|\lambda_0 \sim |A|t^{5A/4}$,
where $\lambda_0$ behaves as in \eqref{eq:leadingmu_000-}. 
This together with the mean value theorem,
\eqref{eq:SSS2}, \eqref{eq:spaceforouter-} and \eqref{eq:kernels} yields 
for $|A|\ll1$ and $1\ll t_0\leq t\leq \tau$, 
\begin{equation}\label{eq:EstforvecS}
\begin{aligned}
	&\begin{aligned}
	|\tilde {\mathcal{F}}|&\lesssim 
	\left(\lambda R \| \nabla \theta_A(\cdot, t)\|_{L^\infty(\mathbf{R}^6)} 
	+ \|\psi( \cdot,t)\|_{L^\infty(\mathbf{R}^6)}\right) 
	\frac{\int_{\R^6}U |Z|\eta_{4R} dy }{ \int_{\R^6} Z^2\eta_{4R} dy} \lambda \\
	&\lesssim 
	\left( \lambda_0 R |A| t^{-\frac{3}{2}} 
	+  |A| t^{-1} R^{-a_1}\right) 
    \lambda_0
    \lesssim |A|t^{-1+ \frac{5A}{4}}R^{-a_1}.
	\end{aligned} 
\end{aligned}
\end{equation}
By \eqref{eq:asymptfrac} with \eqref{eq:SSS}, 
\begin{equation}\label{eq:Estforbeta}
	\beta(t)= 
	-\frac{2\int_{\R^6} UZ\eta_{4R} dy}{\int_{\R^6}Z^2\eta_{4R} dy} 
	\theta_A(0,t) 
	= t^{-1}\left(\frac{5A}{4}+ O(R^{-2})\right). 
\end{equation}
Then by using \eqref{eq:EstforvecS}, \eqref{eq:Estforbeta} 
and \eqref{eq:defR}, we can estimate ${\mathcal{S}}[\mu](t)$ 
in \eqref{eq:Spsidef} as
\begin{equation}\label{eq:EstforF}
	|{\mathcal{S}}[\mu](t)| \le C|A|t^{\frac{5A}{4}} \int_{t_0}^\infty s^{-1}R(s)^{-a_1}ds \le |A|\lambda_0(t)
    \quad \mbox{ for } 1\ll t_0 <t<\tau, 
\end{equation}
and so 
$\mathcal{S}[\mu] \in B_{\rm sc}^\tau$. 
Taking $|\nabla \psi| \le |A|t^{-1}\lambda_0(t)^{-1} R(t)^{-1-a_1}$ 
into account, 
we can prove that $\mathcal{S}$ is a contraction mapping in the same way 
as for \eqref{eq:EstforvecS}. 
\end{proof}

Let us prove 
Proposition \ref{Proposition:SolvabilityforOurinner}. 

\begin{proof}[Proof of Proposition \ref{Proposition:SolvabilityforOurinner}]
By Lemma \ref{lem:selfcS1}, 
we can apply the contraction mapping theorem to 
$\mathcal{S}:B_{\rm sc}^\tau \to B_{\rm sc}^\tau$. 
Then, there exists a unique solution 
$\mu\in B_{\rm sc}^\tau$ of \eqref{eq:modeq}.
Set $\lambda:=\lambda_0+\mu$. The derivation of \eqref{eq:modeq} 
shows \eqref{eq:orthogonalitycondition_for_ourproblem}. 
The estimates in \eqref{eq:assmpfor_mu1_xi2} immediately 
follow from the definition of $B_{\rm sc}^\tau$ and \eqref{eq:modeq}. 
The proof is complete.
\end{proof}

Finally in this section, we show  
a kind of stability of $\lambda_\psi= \lambda_0 + \mu_{\psi}$. 

\begin{lemma}
\label{Lemma:stabilityofparameters}
Assume that $\tau > t_0 \gg1$. Let $\psi_1, \psi_2 \in B_{\rm out}^\tau$. 
Then $\|{\lambda}_{\psi_1}- {\lambda}_{\psi_2}\|_{\rm sc}
\to 0$
as $\psi_1\to \psi_2$ in the sense of $X_{\rm out}^\tau$.
\end{lemma}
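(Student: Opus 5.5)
We write $\mu_i:=\mu_{\psi_i}=\lambda_{\psi_i}-\lambda_0$, so that $\lambda_{\psi_1}-\lambda_{\psi_2}=\mu_1-\mu_2$ and the claim is $\|\mu_1-\mu_2\|_{\rm sc}\to0$. By Proposition \ref{Proposition:SolvabilityforOurinner}, $\mu_i$ is the unique fixed point in $B_{\rm sc}^\tau$ of $\mathcal{S}_{\psi_i}$. The plan is the standard continuous dependence of a fixed point on a parameter under a uniform contraction. We split
\[
	\mu_1-\mu_2=\big(\mathcal{S}_{\psi_1}[\mu_1]-\mathcal{S}_{\psi_1}[\mu_2]\big)+\big(\mathcal{S}_{\psi_1}[\mu_2]-\mathcal{S}_{\psi_2}[\mu_2]\big).
\]
Lemma \ref{lem:selfcS1} gives a contraction constant $\kappa<1$ for the first bracket. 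This constant has to be uniform in $\psi\in B_{\rm out}^\tau$, which holds because the proof of that lemma uses only the bounds defining $B_{\rm out}^\tau$. Hence
\[
	\|\mu_1-\mu_2\|_{\rm sc}\le (1-\kappa)^{-1}\|\mathcal{S}_{\psi_1}[\mu_2]-\mathcal{S}_{\psi_2}[\mu_2]\|_{\rm sc},
\]
and it remains to bound the right-hand side by a quantity that tends to zero as $\|\psi_1-\psi_2\|_{\rm out}\to0$.

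For fixed $\mu=\mu_2$, and so fixed $\lambda=\lambda_0+\mu_2$, the $\theta_A$ contributions cancel in the difference. We are left with
\[
	\tilde{\mathcal F}_{\psi_1}[\mu](s)-\tilde{\mathcal F}_{\psi_2}[\mu](s)
	=\frac{-2\lambda(s)\int_{\R^6}(\psi_1-\psi_2)(\lambda y,s)\,UZ\eta_{4R}\,dy}{\int_{\R^6}Z^2\eta_{4R}\,dy}.
\]
Since $\int U|Z|\eta_{4R}\,dy$ is bounded and $\int Z^2\eta_{4R}\,dy$ is bounded below, this is at most $C\lambda(s)\|\psi_1-\psi_2\|_{\rm out}$. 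Next we insert this into \eqref{eq:Spsidef}. By \eqref{eq:Estforbeta}, $e^{\int_s^t\beta}\lesssim (t/s)^{5A/4}$ up to a constant, because $\int s^{-1}R^{-2}\,ds<\infty$. Using also $\lambda(s)\lesssim s^{5A/4}$, we get
\[
	\lambda_0(t)^{-1}\big|\mathcal{S}_{\psi_1}[\mu](t)-\mathcal{S}_{\psi_2}[\mu](t)\big|
	\lesssim \|\psi_1-\psi_2\|_{\rm out}\int_{t_0}^t ds\le (\tau-t_0)\|\psi_1-\psi_2\|_{\rm out}.
\]
Since $\tau<\infty$ is fixed, this tends to zero, which proves the lemma.

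The main obstacle is uniformity, not the convergence rate. We must check that the contraction constant in Lemma \ref{lem:selfcS1} is independent of $\psi$, and that the crude factor $\tau-t_0$ is acceptable because the lemma only asserts convergence on a fixed interval $[t_0,\tau]$. If a $\tau$-independent bound were desired, one could instead use the weighted differences $|\psi_1-\psi_2|/w_{\rm out}$, but the $X_{\rm out}^\tau$ norm does not control that weight. The finite-horizon estimate above is therefore the natural one to use.
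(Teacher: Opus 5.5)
Your proposal is correct and follows the paper's proof essentially verbatim. Both arguments use a contraction constant $\delta<1$ that is uniform over $B_{\rm out}^\tau$, the splitting $\mu_1-\mu_2=(\mathcal{S}_{\psi_1}[\mu_1]-\mathcal{S}_{\psi_1}[\mu_2])+(\mathcal{S}_{\psi_1}[\mu_2]-\mathcal{S}_{\psi_2}[\mu_2])$, and continuity of $\psi\mapsto\mathcal{S}_\psi[\mu]$ on the finite interval. Your explicit bound $\lambda_0(t)^{-1}|\mathcal{S}_{\psi_1}[\mu](t)-\mathcal{S}_{\psi_2}[\mu](t)|\lesssim(\tau-t_0)\|\psi_1-\psi_2\|_{\rm out}$ is exactly what the paper leaves implicit in the phrase ``since $1\ll t_0<\tau<\infty$''.
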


\begin{proof}
Suppose that  $\psi_1, \psi_2 \in B_{\rm out}^\tau$ satisfy $\psi_1\to \psi_2$ in the sense of $X_{\rm out}^\tau$.
Since $1\ll t_0<\tau <\infty$,
in the same way as for \eqref{eq:EstforvecS}, \eqref{eq:Estforbeta} and \eqref{eq:EstforF}, we obtain for any $\mu \in B_{\rm sc}^\tau$, 
$\|\mathcal{S}_{\psi_1}[{\mu}]- \mathcal{S}_{\psi_2}[{\mu}]\|_{\rm sc}\to 0$ as $\|\psi_1 - \psi_2\|_{X_{\rm out}^\tau} \to 0$.

We write $\lambda_{\psi_i} = \lambda_0 + \mu_{\psi_i}$  for $i=1,2$.
Since $\mathcal{S}_{\psi_1}$ is a contraction mapping, 
there exists $0<\delta<1$ independent of $\psi_1$ such that
\[
\begin{aligned}
\|{\lambda}_{\psi_1}- {\lambda}_{\psi_2}\|_{\rm sc}
&=\|{\mu}_{\psi_1}- {\mu}_{\psi_2}\|_{\rm sc} 
= 
\|\mathcal{S}_{\psi_1}[{\mu}_{\psi_1}]- \mathcal{S}_{\psi_2}[{\mu}_{\psi_2}]\|_{\rm sc}\\
&\le \|\mathcal{S}_{\psi_1}[{\mu}_{\psi_1}]- \mathcal{S}_{\psi_1}[{\mu}_{\psi_2}]\|_{\rm sc} +
\|\mathcal{S}_{\psi_1}[{\mu}_{\psi_2}]- \mathcal{S}_{\psi_2}[{\mu}_{\psi_2}]\|_{\rm sc}\\
&\le \delta \|{\mu}_{\psi_1}- {\mu}_{\psi_2}\|_{\rm sc} 
+ \|\mathcal{S}_{\psi_1}[{\mu}_{\psi_2}]- \mathcal{S}_{\psi_2}[{\mu}_{\psi_2}]\|_{\rm sc}.
\end{aligned}
\]
This leads to the desired conclusion. 
\end{proof}

Up to this point, for given $\psi\in B_{\rm out}^\tau$, 
the modulation parameter 
$\lambda_\psi= \lambda_0 + \mu_{\psi}$ has been determined, 
but $\phi$ remains to be determined. 
In the next section, we construct $\phi$ 
for given $\psi\in B_{\rm out}^\tau$. 

\section{Inner problem}\label{sec:inner}
Let us construct $\phi$ for a given $\psi$ 
by using the orthogonality condition 
\eqref{eq:orthogonalitycondition_for_ourproblem}. 
We set up a framework for the inner problem.
Assume that $1\ll t_0 <\tau <\infty$. 
Fix $a_1<a<1$, where $1/2<a_1<1$ is already fixed 
just before \eqref{eq:spaceforouter-}. 
By using $\sigma$ in \eqref{eq:defoftau-}, we define
\begin{equation}\label{eq:spaceforinner-}
\begin{aligned}
	&B_{\rm in}^\tau := \left\{ \phi\in X_{\rm in}^\tau; 
	\; \|\phi\|_{\rm in} \le 1 \right\}, \\
	&X_{\rm in}^\tau := \left\{ \phi(y,s); \; 
	\begin{aligned} 
	&\mbox{$\phi(\cdot,s)$ is radially symmetric for each $\sigma(t_0)\leq s\leq \sigma(\tau)$,} \\
	&\phi \in C^{1,0}\left( \bigcup_{\sigma(t_0)\leq s\leq \sigma(\tau)} (B_{2R(s)} 
	\times \{s\}) \right),  
	\; \|\phi\|_{X_{\rm in}^\tau} <\infty
	\end{aligned}
	\right\}, \\
	&\|\phi\|_{\rm in}
	= \|\phi\|_{X_{\rm in}^\tau} 
	:= \sup_{y \in B_{2R(s)}, \; \sigma(t_0)\leq s\leq \sigma(\tau)} 
	\frac{ \langle y\rangle|\nabla \phi(y,s)| + |\phi(y,s)|}{w_{\rm in}(y,s)}, \\
	&w_{\rm in}(y,s):= s^{-1} R(s)^{7-a} \langle y\rangle^{-7},  
	\quad \langle y \rangle := \sqrt{1+|y|^2}. 
\end{aligned}
\end{equation}
We note that $\sigma$ is determined by $\lambda$ 
and that $B_{\rm in}^\tau$ is a closed and convex subset 
of a Banach space $X_{\rm in}^\tau$.

We recall that the linearized operator $\Delta + 2 U$ has 
a unique positive eigenvalue $\gamma_0$ 
and a corresponding radial and exponentially decaying eigenfunction 
$Z_0 \in L^\infty (\mathbf{R}^6)$ which satisfies 
$\Delta Z_0 + 2 U Z_0 = \gamma_0 Z_0$ in $\mathbf{R}^6$. 

\begin{proposition}\label{pro:solvinner}
Assume that $\tau > t_0 \gg1$, $1/2<a_1<a<1$ and $A\neq 0$ with $|A|\ll1$. 
Let $\psi\in B_{\rm out}^\tau$ and let 
$\lambda_\psi \in \Lambda^\tau$  
be as in Proposition \ref{Proposition:SolvabilityforOurinner}. 
Then there exists a unique pair 
$(\phi_\psi,C_\psi)\in B_{\rm in}^\infty \times \mathbf{R}$ 
such that $\phi_\psi$ satisfies the inner problem 
\begin{equation}\label{eq:innpro}
\left\{
\begin{aligned}
	&\begin{aligned}
	&\lambda_\psi^2 \partial_t \phi_\psi 
	- \Delta_y \phi_\psi 
	- 2 U(y) \phi_\psi  \\
	&\quad =  \mathcal{G}[\psi, \lambda_\psi] 
    \eta_{4R(t)}
	\mathbf{1}_{t\leq \tau}, 
	\end{aligned}
	&&y \in B_{{16R(\sigma_\psi(t))}}, \; t>t_0, \\
	&\phi_\psi(y,t_0) =  C_\psi Z_0(y), 
	&&y \in B_{16R(\sigma_\psi(t_0))}, 
\end{aligned}
\right.
\end{equation}
where $\mathcal{G}$ is defined by \eqref{eq:defofH-} 
and $B_{\rm in}^\infty$ is defined 
by replacing $t_0\leq t\leq \tau$ with $t_0\leq t<\infty$ 
in \eqref{eq:spaceforinner-}. 
In addition, $\sigma_\psi=\sigma_{\lambda_\psi}$ 
is given by \eqref{eq:defoftau-} with $\lambda=\lambda_\psi$. 
\end{proposition}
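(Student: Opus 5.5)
We change variables to the rescaled time $s=\sigma_\psi(t)$ from \eqref{eq:defoftau-}, so that $\lambda_\psi^2\partial_t=\partial_s$. The inner problem \eqref{eq:innpro} then becomes
\[
\partial_s\phi-\Delta_y\phi-2U\phi=h(y,s):=\lambda_\psi^2\,\mathcal{G}[\psi,\lambda_\psi]\,\eta_{4R}\mathbf{1}_{t\le\tau}\cdot\lambda_\psi^{-2}
\]
with the appropriate normalization, posed on $B_{16R(s)}$ for $s>s_0:=\sigma(t_0)$. By \eqref{eq:defoftauA-} the support of $h$, which lies in $B_{8R(t)}$, is contained in $B_{16R(s)}$.

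The plan is to invoke a linear theory of Cortázar--del Pino--Musso type (as in \cite{CDM20,WZ25}) for radial right-hand sides satisfying $|h|\lesssim s^{-\nu}R(s)^{b}\langle y\rangle^{-2-\alpha}$ together with the orthogonality $\int h Z\,dy=0$. That theory gives a solution $\phi$ with initial datum $C Z_0$, where the constant $C$ is fixed by a linear functional chosen to kill the exponentially unstable mode $Z_0$. The solution satisfies $|\phi|+\langle y\rangle|\nabla\phi|\lesssim s^{-\nu}R^{b'}\langle y\rangle^{-\alpha}$. Uniqueness of the pair $(\phi,C)$ follows because the difference of two solutions solves the homogeneous problem with data $cZ_0$; it grows like $e^{\gamma_0 s}$ unless $c=0$, and it then vanishes by uniqueness for the Dirichlet-type problem in the class with weighted decay.

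The key estimates concern $h=\lambda\dot\lambda Z+2\lambda^2U\theta_A(\lambda y,t)+2\lambda^2U\psi(\lambda y,t)$ on $|y|\le 8R$. The orthogonality against $Z\eta_{4R}$ is exactly Proposition~\ref{Proposition:SolvabilityforOurinner}. For sizes, \eqref{eq:leadingmu_0-} gives $\lambda\dot\lambda\sim A t^{-1+5A/2}$, and Lemma~\ref{thm:Psi_two_sided} gives $\lambda^2|\theta_A|\lesssim|A|t^{-1+5A/2}$, while $\psi\in B^\tau_{\rm out}$ yields $\lambda^2|\psi|\lesssim |A|t^{-1+5A/2}R^{-a_1}$. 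Since $Z,U=O(\langle y\rangle^{-4})$, we get $|h|\lesssim|A|s^{-1}\langle y\rangle^{-4}$ using $s\sim t^{1-5A/2}$. This decay $\langle y\rangle^{-4}$ is weaker than the target $\langle y\rangle^{-7}$. The leading non-decaying part $\lambda\dot\lambda Z+2\lambda^2U\theta_A(0,t)$ is orthogonal to $Z$ only up to the cutoff, and it has a $\langle y\rangle^{-4}$ tail. The weight $R^{7-a}\langle y\rangle^{-7}$ is, however, designed to dominate $\langle y\rangle^{-4}R^{b}$ on $|y|\le 2R$: indeed $R^{7-a}\langle y\rangle^{-7}\ge R^{4-a}\langle y\rangle^{-4}$ there. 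So we apply the linear estimate with $\alpha$ near $2$ (the natural gain in dimension $6$ for a $\langle y\rangle^{-4}$ source), obtaining $|\phi|\lesssim s^{-1}R^{c}\langle y\rangle^{-2}$, and convert to the $w_{\rm in}$ weight on $B_{2R}$ by losing powers of $R$. The factor $|A|\ll1$ then produces $\|\phi\|_{\rm in}\le1$.

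The main obstacle is this linear estimate on the expanding ball $B_{16R(s)}$ with only logarithmic losses. One needs to check that the available linear theory, which in dimension $6$ is restricted to decay rates $\alpha<4$ with the $Z$-mode removed by orthogonality, gives powers of $R$ no worse than $R^{7-a}\langle y\rangle^{-7}$ permits. The point needing care is the borderline $|y|\sim R$, where the $\langle y\rangle^{-4}$ source must be balanced against the required $R^{-a}$ gain. There I would first try the gain provided by the $R^{-a_1}$ smallness in $\psi$ together with the $O(R^{-2})$ error in \eqref{eq:asymptfrac}. Failing that, I would decompose $h$ into its projection onto $Z$ on $B_{4R}$, which vanishes, plus a remainder supported near $|y|\sim R$, and treat the remainder by a barrier argument on the annulus.
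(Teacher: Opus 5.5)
Your proposal is essentially the paper's proof. You pass to $s=\sigma_\psi(t)$, take the orthogonality from Proposition~\ref{Proposition:SolvabilityforOurinner}, bound the right-hand side by $|A|s^{-1}\langle y\rangle^{-4}$, and invoke the linear theory of Lemma~\ref{lem:solvinn} (from \cite{WZ25}); the paper also takes uniqueness of the pair from that lemma, not from a Dirichlet problem, which \eqref{eq:innpro} does not pose. The ``main obstacle'' you describe does not arise: that lemma takes sources in the norm $\|g\|_*=\sup s\langle y\rangle^{2+a}|g|$ and returns exactly the weight $w_{\rm in}=s^{-1}R(s)^{7-a}\langle y\rangle^{-7}$, so $\langle y\rangle^{-4}\le\langle y\rangle^{-2-a}$ gives $\|g\|_*\lesssim|A|$ and hence $\|\phi_\psi\|_{\rm in}\lesssim|A|\le1$ directly, with no need for $\alpha\approx 2$, $R$-power bookkeeping at $|y|\sim R$, or barrier arguments.
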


Thanks to \eqref{eq:defoftauA-}, the solution $\phi_\psi$ given by the above proposition is also a solution of the inner problem \eqref{eq:Innerproblem-} for $t_0<t\leq \tau$.
Although $\psi\in B_{\rm out}^\tau$ is defined up to $t\leq \tau$, 
the linear theory used in the proof of Proposition \ref{pro:solvinner} 
requires that the inhomogeneous term is defined for all $t> t_0$. 
For this reason, we used $\mathbf{1}_{t\leq \tau}$ in \eqref{eq:innpro}. 
To prove Proposition \ref{pro:solvinner}, 
we prepare a norm 
\[
	\|f\|_* := 
	\sup_{y \in B_{8R(s)}, \; s > s_0} 
	s \langle y\rangle^{2+a} |f(y,s)|. 
\]
We define $B_{\rm in}^\infty$ 
by replacing $t_0\leq t\leq \tau$ with $t_0\leq t<\infty$ 
in \eqref{eq:spaceforinner-}. 
Then, we recall the linear theory developed in 
\cite{CDM20,WZZ24,WZ25}. 

\begin{lemma}\label{lem:solvinn}
Assume that $t_0\gg1$. 
Let $g(y,t)$ satisfy $\|g\|_*<\infty$ and 
\[
	\int_{\R^6} g(y,t) Z(y)  dy =0
	\quad \mbox{ for }t>t_0, 
\]
where $Z$ is defined by \eqref{eq:kernels}. 
Then there exists a unique pair 
$(\tilde \phi, \tilde C)\in B_{\rm in}^\infty\times \mathbf{R}$ such that 
\begin{equation}\label{eq:inhom}
\left\{
\begin{aligned}
	&\lambda^2 \partial_t \tilde \phi 
	- \Delta_y \tilde \phi 
	- 2 U(y) \tilde \phi 
	= g(y,t), 
	&&y \in B_{16R(\sigma(t))}, \; t>t_0, \\
	&\tilde \phi(y,t_0) = \tilde C Z_0(y), 
	&&y \in B_{16R(\sigma(t_0))}. 
\end{aligned}
\right.
\end{equation} 
Moreover, $\tilde \phi(y,t) = \tilde \phi(y,\sigma^{-1}(s)) \in B_{\rm in}^\infty$ and $\tilde C\in\mathbf{R}$ satisfy 
\[
	\|\tilde \phi\|_{\rm in} \lesssim \|g\|_*, 
	\quad 
	|\tilde C|\lesssim \sigma(t_0)^{-1} R(t_0)^{2-a}\|g\|_*.
\]
If $g(\cdot,t)$ is radially symmetric for each $t>t_0$, 
then $\tilde \phi(\cdot,t)$ is also radially symmetric for each $t>t_0$. 
\end{lemma}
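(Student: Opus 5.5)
The plan is to reduce \eqref{eq:inhom} to a standard linear parabolic problem with constant time scale by switching to the variable $s=\sigma(t)$ from \eqref{eq:defoftau-}, and then to reproduce the linear theory of \cite{CDM20} (see also \cite{WZZ24,WZ25}) in the radial setting. Since $\dot\sigma=\lambda^{-2}$, we have $\lambda^2\partial_t=\partial_s$. Writing $\tilde\phi(y,t)=\varphi(y,\sigma(t))$, the problem becomes
\[
	\partial_s\varphi=\Delta_y\varphi+2U\varphi+g(y,\sigma^{-1}(s)) \quad \mbox{ in } B_{16R(s)},\; s>s_0:=\sigma(t_0),
\]
which matches the way $\|\cdot\|_*$ and $w_{\rm in}$ are already written in the variable $s$. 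Since $R(s)=(\log(e+s))^2$ varies slowly, the moving ball $B_{16R(s)}$ can be handled as in \cite{CDM20}. I would either impose a zero Dirichlet condition on $\partial B_{16R(s)}$, or extend $g$ by zero to $\mathbf{R}^6$ and solve there; only the values on $B_{2R(s)}$ enter $\|\cdot\|_{\rm in}$, so the choice is immaterial.

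The construction proceeds in three steps.
\begin{enumerate}
\item \emph{Splitting off the unstable mode.} Decompose $\varphi$ along $Z_0$, the eigenfunction of the unique positive eigenvalue $\gamma_0$, and its complement. The $Z_0$-coefficient $p(s)$ solves $\dot p=\gamma_0p+\langle g,Z_0\rangle$ up to small boundary errors. It stays bounded only if
\[
	p(s_0)=-\int_{s_0}^\infty e^{-\gamma_0(\sigma-s_0)}\langle g,Z_0\rangle(\sigma)\,d\sigma .
\]
This forces $\tilde C$ and gives $|\tilde C|\lesssim s_0^{-1}\|g\|_*$, which is even better than the claimed $s_0^{-1}R(s_0)^{2-a}\|g\|_*$.
\item \emph{Solvability on bounded intervals.} Given this $\tilde C$, solve on $[s_0,S]$ by standard parabolic theory, and then let $S\to\infty$ once uniform bounds are available.
\item \emph{Radial symmetry and uniqueness.} Radial symmetry is inherited from uniqueness of the linear problem when $g$ is radial. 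Uniqueness of the pair in $B_{\rm in}^\infty\times\mathbf{R}$ follows by applying the same a priori estimate to a difference with $g=0$: the stable part decays, and the $Z_0$-part must vanish since it would otherwise grow like $e^{\gamma_0 s}$.
\end{enumerate}

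The core is the a priori estimate
\[
	|\varphi(y,s)|+\langle y\rangle|\nabla\varphi(y,s)| \lesssim s^{-1}\langle y\rangle^{-a}\|g\|_* \quad \mbox{ for } |y|\le 2R(s),
\]
for solutions whose $Z_0$-mode has been killed and with $\int g Z\,dy=0$. This suffices for the lemma, because on $B_{2R(s)}$ we have $\langle y\rangle^{-a}\lesssim R(s)^{7-a}\langle y\rangle^{-7}$, so the bound yields $\|\tilde\phi\|_{\rm in}\lesssim\|g\|_*$.

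I would prove the estimate by the blow-up/contradiction argument of \cite{CDM20}. Suppose there are sequences with $\|g_k\|_*\to0$ while the weighted sup of $\varphi_k$ equals $1$. If the sup is attained at bounded $y$, parabolic compactness after the time shift $s\mapsto s_k+s$ produces a bounded ancient solution of $\partial_s\varphi=\Delta\varphi+2U\varphi$. This limit is orthogonal to $Z_0$, and to $Z$ in the time-integrated sense inherited from the orthogonality of $g$. By the nondegeneracy of the bubble in the radial class, where the bounded kernel is spanned by $Z$, the limit must vanish. If instead the sup is attained at $|y_k|\to\infty$, rescale by $|y_k|$: the potential $2U$ disappears and one obtains a nontrivial solution of the heat equation bounded by $|y|^{-a}$, which is excluded by a Liouville argument. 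The weight $s^{-1}$ is handled by an outer barrier of the form $s^{-1}\langle y\rangle^{-a}$, using that $\Delta\langle y\rangle^{-a}\approx -a(4-a)\langle y\rangle^{-a-2}$ in dimension $6$. This is exactly where the orthogonality to $Z$ is needed: a component along $Z\sim|y|^{-4}$ would spoil the gain from $\langle y\rangle^{-2-a}$ to $\langle y\rangle^{-a}$. The gradient bound then follows from local parabolic estimates on cylinders of size $\langle y\rangle$.

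The main obstacle is this contradiction argument, specifically the middle case where the sup sits at $1\ll|y_k|\lesssim R(s_k)$ near the moving boundary. There the boundary values, the slow growth of $R(s)$, and the removed $Z$-direction must all be controlled simultaneously. I would follow \cite[Sect.~7]{CDM20} and \cite{WZ25} as closely as possible, checking only that the six-dimensional exponents, namely the $|y|^{-4}$ decay of $Z$ and $0<a<1$, fit their hypotheses.
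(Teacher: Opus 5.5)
Your change of variables $s=\sigma(t)$ is exactly what the paper does; the paper then applies \cite[Proposition 3.1]{WZ25}. The gap is in the estimate you propose to prove in place of that citation. The bound $|\varphi|+\langle y\rangle|\nabla\varphi|\lesssim s^{-1}\langle y\rangle^{-a}\|g\|_*$ on $B_{2R(s)}$, uniformly as $R(s)\to\infty$, is false for both set-ups you suggest.

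The condition $\int gZ\,dy=0$ only removes the radial homogeneous solution that tends to a nonzero constant at infinity. It gives no control of the coefficient of $Z$ itself. In dimension six, $Z\approx-2\cdot 24^2|y|^{-4}$ lies in $L^2$, while $\langle y\rangle^{-a}Z\notin L^1(\mathbf{R}^6)$.

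Here is a model case. Take $g=s^{-1}G$ with $G=|y|^{-2-a}$ on $1\le|y|\le 8R$, corrected near the origin so that $\int GZ\,dy=0$; then $\|g\|_*\lesssim 1$. The regular radial solutions of $(\Delta+2U)\varphi=-g$ that decay at infinity are exactly $\varphi_{\rm ell}+cZ$ with $c\in\mathbf{R}$. Normalize $\varphi_{\rm ell}(0)=0$. Then $\varphi_{\rm ell}\approx s^{-1}\big(\frac{|y|^{-a}}{a(4-a)}-\frac{(8R)^{-a}}{4a}\big)$ for $1\ll|y|\le 8R$, and $\varphi_{\rm ell}\approx \frac{(8R)^{4-a}}{4(4-a)}s^{-1}|y|^{-4}$ for $|y|\ge 8R$. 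The parabolic problem selects $c$, and the orthogonality of $g$ does not see it.
\begin{itemize}
\item \textbf{Whole space.} Extend $g$ by zero and solve in $\mathbf{R}^6$. Then $\int\varphi Z\,dy$ is conserved and equals $\tilde C\int Z_0Z\,dy=0$. So the quasi-stationary profile has $c=-\|Z\|_{L^2}^{-2}\int\varphi_{\rm ell}Z\,dy$. The regions $|y|\le 8R$ and $|y|\ge 8R$ both contribute at order $s^{-1}R^{2-a}$. To leading order they add up to $-\frac{2\cdot 24^2|S^5|}{4(2-a)}(8R)^{2-a}s^{-1}\neq0$. Hence $|\varphi(0,s)|\sim s^{-1}R(s)^{2-a}$.
\item \textbf{Dirichlet.} With zero data on $\partial B_{16R(s)}$ it is worse: $c=-\varphi_{\rm ell}(16R)/Z(16R)\sim s^{-1}R(s)^{4-a}$. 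The Dirichlet operator has an eigenvalue of order $-R^{-4}$ with eigenfunction close to $Z$. Its relaxation time $R^4=(\log(e+s))^8\ll s$ makes the evolution quasi-static.
\end{itemize}
In both cases the loss at $|y|=O(1)$ is a positive power of $R(s)$. This is why the lemma is stated with $w_{\rm in}=s^{-1}R(s)^{7-a}\langle y\rangle^{-7}$ rather than $s^{-1}\langle y\rangle^{-a}$. Note that $R^{4-a}\langle y\rangle^{-4}\lesssim R^{7-a}\langle y\rangle^{-7}$ on $B_{2R}$.

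The same mechanism shows where your blow-up argument breaks. When the weighted supremum sits at bounded $y_k$, the limit is an ancient solution with $|\varphi_\infty|\le\langle y\rangle^{-a}$, orthogonal to $Z_0$. But $c_*Z$ is itself such a solution (a stationary one). The orthogonality to $Z$ that would exclude it does not pass to the limit. Since $\int_{\mathbf{R}^6}\langle y\rangle^{-a}|Z|\,dy=\infty$, the projection $\int\varphi_k Z\,dy$ is carried by the region $1\ll|y|\lesssim\sqrt{s}$, or by boundary terms on $\partial B_{16R}$. Normalizing the example above on $[s_0,S_k]$ with $S_k\to\infty$ produces exactly the limit $c_*Z\neq0$, so no contradiction arises.

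Your heuristic in the barrier step is also reversed. Orthogonality removes the non-decaying homogeneous solution. A $Z$-component, which decays like $|y|^{-4}$, harms the size near $y=0$, not the decay.

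The implication from an $s^{-1}\langle y\rangle^{-a}$ bound to $\|\tilde\phi\|_{\rm in}\lesssim\|g\|_*$ is fine, but its premise cannot be established. To repair the proposal, work directly in the weaker norm $\|\cdot\|_{\rm in}$, which tolerates a $Z$-component of size up to $s^{-1}R(s)^{7-a}$ near the origin. That is what the linear theory cited by the paper provides, and after the step $\lambda^2\partial_t=\partial_s$ nothing more is needed.
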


\begin{proof}
By using $\sigma$ in \eqref{eq:defoftau-}, 
we introduce 
\[
	\Phi(y,s) := \tilde \phi(y,\sigma^{-1}(s))= \tilde \phi (y,t), 
	\quad 
	s = \sigma (t). 
\]
Then, the left-hand sides in \eqref{eq:inhom} are transformed into 
\begin{equation}\label{eq:Innerproblem_tau-}
\left\{ 
\begin{aligned}
	&\partial_s  \Phi - \Delta_y \Phi
	- 2U(y) \Phi  
	= g(y,t) 
	&&y\in B_{16R(s)},\; s>s_0:= \sigma(t_0), \\
	&\Phi(y,s_0) = \tilde C Z_0(y), 
	&&y \in B_{16R(s_0)}, 
\end{aligned}
\right.
\end{equation}
where $\Phi$ in the left-hand side of the equation is evaluated at $(y,s)$. 
We note that $\lambda^2$ in \eqref{eq:inhom} 
vanishes in \eqref{eq:Innerproblem_tau-}. 
Since $g$ satisfies $\|g\|_*<\infty$ and \eqref{eq:inhom}, 
we can apply the linear theory established in 
\cite[Proposition 3.1]{WZ25} 
(see also \cite[Propositions 5.1, 7.1]{CDM20} and \cite[Proposition 7.1]{WZZ24})  
to \eqref{eq:Innerproblem_tau-}. 
Uniqueness is also a consequence of the linear theory, 
since both $\tilde \phi$ and $\tilde C$ depend linearly on $g$ 
(see \cite[Proposition 3.1]{WZ25}). 
The uniqueness together with rotational invariance of the equation 
in \eqref{eq:Innerproblem_tau-} implies the radial symmetry. 
Then  the lemma follows. 
\end{proof}

Let us prove Proposition \ref{pro:solvinner}. 

\begin{proof}[Proof of Proposition \ref{pro:solvinner}]
Let $\psi\in B_{\rm out}^\tau$ and let 
$\lambda=\lambda_\psi \in \Lambda^\tau$ be
determined by Proposition \ref{Proposition:SolvabilityforOurinner}. 
To apply Lemma \ref{lem:solvinn} 
with $g=\mathcal{G}[\psi,\lambda]\eta_{4R(t)} \mathbf{1}_{t \leq \tau}$, 
we check that 
$\| \mathcal{G}[\psi,\lambda]\eta_{4R(t)}\mathbf{1}_{t \leq \tau}\|_*<\infty$. 
By \eqref{eq:SSS} and $\psi\in B_{\rm out}^\tau$ (see \eqref{eq:spaceforouter-}), 
we note that 
\begin{align}
	&\label{eq:SSSuni}
	|\theta_A(x,t)| \lesssim |A| t^{-1}, \\
	&\label{eq:psiesBout222}
	|\psi(x,t)|\leq |A|t^{-1} R(t)^{-a_1}
	[\mathbf{1}_{|x| \le \sqrt{t}} 
	+ t|x|^{-2} \mathbf{1}_{|x| > \sqrt{t}}]
	\lesssim |A|t^{-1} R(t)^{-a_1}. 
\end{align}
From \eqref{eq:behabioroftau-}, \eqref{eq:defofH-}, 
\eqref{eq:leadingmu_0-}, \eqref{eq:SSSuni} and \eqref{eq:psiesBout222}, 
it follows that 
\[
\begin{aligned}
	\sigma(t) \langle y\rangle^{2+a} |\mathcal{G}| &\lesssim 
	t^{1- \frac{5A}{2}} \langle y\rangle^{2+a}
	\Big( |A| t^{-1+\frac{5A}{2}} \langle y \rangle^{-4}
	+|A| t^{-1+\frac{5A}{2}} \langle y \rangle^{-4}  \\
	&\quad 
	+|A| t^{-1+\frac{5A}{2}} R^{-a_1} \langle y \rangle^{-4}   
	 \Big) \\
	&\lesssim 
	2|A| \langle y \rangle^{-2+a}
	+|A| R^{-a_1} \langle y \rangle^{-2+a}    
	\lesssim |A|  
\end{aligned}
\]
for $1\ll t_0\leq t\leq \tau$, where $t_0$ depends on $|A|$. 
Thus, 
\[
	\| \mathcal{G}[\psi,\lambda] \eta_{4R(t)}\mathbf{1}_{t \leq \tau}\|_* \lesssim |A| <\infty, 
\]
and so we can apply Lemma \ref{lem:solvinn} to see that 
there exist $\phi_\psi\in B_{\rm in}^\infty$ and $C_\psi\in \mathbf{R}$ 
satisfying \eqref{eq:inhom} 
with $g=\mathcal{G}[\psi,\lambda]\eta_{4R(t)} \mathbf{1}_{t \leq \tau}$. 
Thus, the proof is complete.
\end{proof}

In addition, we show a kind of stability of $\phi_\psi$. 

\begin{lemma}\label{Lemma:stabilityofin}
Assume that $\tau > t_0 \gg1$. 
If $\psi_1\to \psi_2$ in $X^\tau_{\rm out}$ 
with $\psi_1, \psi_2 \in B_{\rm out}^\tau$, then 
$\phi_{\psi_1} \to \phi_{\psi_2}$ in $X^\tau_{\rm in}$.
\end{lemma}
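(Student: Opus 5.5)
The plan is to exploit the linearity and uniqueness in Lemma \ref{lem:solvinn}, together with the stability of the modulation parameter from Lemma \ref{Lemma:stabilityofparameters}. Write $\lambda_i:=\lambda_{\psi_i}$, $\sigma_i:=\sigma_{\lambda_i}$, $\phi_i:=\phi_{\psi_i}$ and $g_i:=\mathcal{G}[\psi_i,\lambda_i]\eta_{4R(t)}\mathbf{1}_{t\le\tau}$. The difficulty is that $\phi_1$ and $\phi_2$ solve equations with different time-rescalings, since the operator $\lambda_i^2\partial_t$ differs between them. In the variable $s=\sigma_i(t)$ both problems become the same operator $\partial_s-\Delta_y-2U$. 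However, the right-hand sides are then $g_i(y,\sigma_i^{-1}(s))$, and the domains $B_{16R(s)}$ are written in the same variable $s$.

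\textbf{Step 1: compare in the $s$ variable.} I will set $\Phi_i(y,s):=\phi_i(y,\sigma_i^{-1}(s))$. By Lemma \ref{lem:solvinn}, applied with $\lambda_1$, the difference $\Phi_1-\Phi_2$ (together with $C_{\psi_1}-C_{\psi_2}$) is the unique solution pair corresponding to the source
\[
h(y,s):=g_1(y,\sigma_1^{-1}(s))-g_2(y,\sigma_2^{-1}(s)).
\]
This source still satisfies the orthogonality $\int hZ\,dy=0$: each $g_i(\cdot,t)$ is orthogonal to $Z$ by \eqref{eq:orthogonalitycondition_for_ourproblem}, since $\eta_{4R(t)}$ is already built in. I also have to handle the mismatch $\sigma_1(t_0)=\sigma_2(t_0)=t_0/\lambda_0(t_0)^2$, which holds by \eqref{eq:defoftau-}, so the initial times agree. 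Linearity of the solution map then gives $\|\Phi_1-\Phi_2\|_{\rm in}\lesssim\|h\|_*$. Since the norm $\|\cdot\|_{\rm in}$ is itself written in $s$, it remains to show $\|h\|_*\to0$, and then to translate back to $X^\tau_{\rm in}$. The translation back is harmless because $\sigma_1^{-1}\to\sigma_2^{-1}$ uniformly on compact ranges, with $\tau<\infty$ and both $\phi_i$ continuous.

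\textbf{Step 2: estimate $\|h\|_*$.} I will split
\[
h=\bigl[g_1(\cdot,\sigma_1^{-1}(s))-g_2(\cdot,\sigma_1^{-1}(s))\bigr]+\bigl[g_2(\cdot,\sigma_1^{-1}(s))-g_2(\cdot,\sigma_2^{-1}(s))\bigr].
\]
For the first bracket, I expand $\mathcal{G}$ from \eqref{eq:defofH-}. The differences $\lambda_1\dot\lambda_1-\lambda_2\dot\lambda_2$, $\lambda_1^2\theta_A(\lambda_1y,t)-\lambda_2^2\theta_A(\lambda_2y,t)$ and $\lambda_1^2\psi_1(\lambda_1y,t)-\lambda_2^2\psi_2(\lambda_2 y,t)$ are controlled by $\|\lambda_1-\lambda_2\|_{\rm sc}$, by $|\dot\mu_1-\dot\mu_2|$ (read off from the modulation equation \eqref{eq:modeq}, which is continuous in $(\psi,\mu)$), and by $\|\psi_1-\psi_2\|_{X_{\rm out}^\tau}$. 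For this I use \eqref{eq:SSS2}, the gradient bound in $B_{\rm out}^\tau$, and the mean value theorem, with weights $\langle y\rangle^{-4}$ coming from $U$ and $Z$. Multiplying by $s\langle y\rangle^{2+a}$ on $|y|\le 8R$ produces bounded factors, because $a<2$ and $t\le\tau$ is finite. All these quantities tend to $0$ by Lemma \ref{Lemma:stabilityofparameters}.

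\textbf{Step 3: the time-shift bracket, which I expect to be the main obstacle.} The second bracket requires continuity of $g_2$ in $t$, but $\psi_2$ is only $C^{1,0}$, so this is merely a modulus of continuity. There is also the indicator $\mathbf{1}_{t\le\tau}$, which is discontinuous at $t=\tau$. The first thing I would try is to use uniform continuity of $g_2$ on the compact set $\overline{B_{8R}}\times[t_0,\tau]$, which follows from continuity of $\psi_2$ and of the other terms. Together with $\sup_s|\sigma_1^{-1}(s)-\sigma_2^{-1}(s)|\to0$, which follows from $\|\lambda_1-\lambda_2\|_{\rm sc}\to0$, this makes the bracket small except on a thin $s$-layer near $\sigma_i(\tau)$. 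On that layer I would bound $h$ crudely by $|A|$ in $\|\cdot\|_*$. That crude bound is not small, however. So instead I would avoid this layer entirely: I would compare $\phi_1,\phi_2$ only on $t\le\tau$, which is all that $X^\tau_{\rm in}$ sees. I would use the uniqueness and causality of the linear solution to restrict attention to $t\le\tau'<\tau$, then let $\tau'\uparrow\tau$ using the uniform-in-$\psi$ bound $\|\phi_i\|_{\rm in}\lesssim|A|$ from Proposition \ref{pro:solvinner} near the endpoint. If the linear theory's $\tilde C$ is not causal, I would instead extend $g_i$ continuously past $\tau$ (for example by freezing at $t=\tau$ and cutting off) before applying Lemma \ref{lem:solvinn}, and check that this does not change $\phi_i$ on $[t_0,\tau]$.
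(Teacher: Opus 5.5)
Your Steps 1--2 coincide with the paper's proof. It passes to the variable $s$, applies Lemma \ref{lem:solvinn} to $\Phi_1-\Phi_2$, and simply asserts that the $\|\cdot\|_*$-norm of the source difference tends to $0$ by Lemma \ref{Lemma:stabilityofparameters}. You are more careful in places, e.g.\ in extracting $\dot\mu_1-\dot\mu_2\to0$ from \eqref{eq:modeq}.

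The paper never mentions the mismatch $\sigma_1(\tau)\neq\sigma_2(\tau)$ of the cut-off times, and your concern about it is justified. On the layer between $\sigma_2(\tau)$ and $\sigma_1(\tau)$ the difference equals $\pm$ one of the sources. Near $y=0$ that source has $\|\cdot\|_*$-size of order $|A|$ no matter how thin the layer is. So the black-box bound $\|\tilde\phi\|_{\rm in}\lesssim\|g\|_*$ alone does not give convergence.

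Your remedy in Step 3, however, fails, for two reasons.

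First, the solution map of Lemma \ref{lem:solvinn} is not causal. $\tilde C$ is chosen to suppress the unstable mode $Z_0$ (eigenvalue $\gamma_0>0$). So the $Z_0$-component $p$ of $\tilde\phi$, which essentially obeys $\dot p=\gamma_0 p+g_0$ with $g_0$ the $Z_0$-component of $g$, is forced to be the bounded solution $p(s)=-\int_s^\infty e^{-\gamma_0(s'-s)}g_0(s')\,ds'$. Hence $\tilde\phi$ on $[s_0,s_1]$ depends on $g$ for $s>s_1$; this is exactly why the paper must define the source for all $t>t_0$. Consequently you cannot localize to $t\le\tau'$. In your fallback, extending $g_i$ past $\tau$ \emph{does} change $\phi_i$ on $[t_0,\tau]$, so the check you plan would fail.

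Second, even with causality, the uniform bound $\|\phi_i\|_{\rm in}\lesssim|A|$ on $[\tau',\tau]$ gives no smallness of $\phi_1-\phi_2$ there. The norm $\|\cdot\|_{\rm in}$ is a supremum over the whole interval, so letting $\tau'\uparrow\tau$ does not close the argument.

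What does work is your fallback without the false invariance claim:
\begin{itemize}
\item Redefine $\phi_\psi$ in Proposition \ref{pro:solvinner} by replacing $\mathbf{1}_{t\le\tau}$ with the profile $\mathcal{G}[\psi,\lambda_\psi](\cdot,\tau)\eta_{4R(\tau)}$, frozen at $s=\sigma_\psi(\tau)$, times a continuous cut-off in $s$ vanishing for $s\ge\sigma_\psi(\tau)+1$.
\item Orthogonality to $Z$ persists by continuity at $t=\tau$, and the bound $\|g\|_*\lesssim|A|$ persists.
\item The rest of the construction only uses that $\phi_\psi$ solves \eqref{eq:Innerproblem-} on $(t_0,\tau]$ with $\|\phi_\psi\|_{\rm in}\le1$, so it is unaffected.
\item Both frozen profiles sit at the same time $t=\tau$, so your uniform-continuity argument then gives $\|h\|_*\to0$.
\end{itemize}
If you keep the indicator instead, you must open the linear theory. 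You would need to show that a source supported on a time layer of width $\delta$ produces a response of size $O(\delta)$; for the unstable mode this is visible from the formula for $p$ above.
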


\begin{proof}
Let $\psi_1, \psi_2 \in B_{\rm out}^\tau$.
We write $\lambda_i:=\lambda_{\psi_i}$
and $\sigma_i := \sigma_{\lambda_i}$ for $i=1,2$. 
By \eqref{eq:defofH-} and \eqref{eq:Innerproblem_tau-}, 
we observe that $\Phi_i(y,s):= \phi_{\psi_i}(y,t)$ satisfies
\begin{equation*}
\left\{ 
\begin{aligned}
	&\partial_s  \Phi_i - \Delta_y \Phi_i -2U(y) \Phi_i
	= \mathcal{G}[\psi_i, \lambda_i] 
    \eta_{4R(\sigma_i^{-1}(s))}
	\mathbf{1}_{s\leq \sigma_i(\tau)}
	&&y\in B_{16R(s)},\; s>s_0, \\
	&\Phi_i(y,s_0) = \tilde C_i  Z_0(y), 
	&&y \in B_{16R(s_0)}. 
\end{aligned}
\right.
\end{equation*}
Thus, by using Lemma \ref{lem:solvinn} 
with $\tilde \phi=\Phi_1-\Phi_2$,
\[
	\|\Phi_1-\Phi_2\|_{X^\tau_{\rm in}} 
	\lesssim \Big\|\mathcal{G}[\psi_1, \lambda_1] 
    \eta_{4R(\sigma_1^{-1}(s))}
	\mathbf{1}_{s\leq \sigma_1(\tau)} - \mathcal{G}[\psi_2, \lambda_2] 
    \eta_{4R(\sigma_2^{-1}(s))}
	\mathbf{1}_{s\leq \sigma_2(\tau)}\Big\|_*.
\]
By the continuity of $\lambda_\psi$ with respect to $\psi$ 
(see Lemma \ref{Lemma:stabilityofparameters}),
we see that the right hand side 
tends to $0$ as $\psi_1\to \psi_2$ in $X^\tau_{\rm out}$.
Thus, the proof is complete.
\end{proof}

\section{Outer problem}\label{sec:outer}
As shown in Sections \ref{sec:choice} and \ref{sec:inner}, 
if $\psi\in B_{\rm out}^\tau$ is given, 
then the modulation parameter $\lambda_\psi$ 
and the inner solution $\phi_\psi$ are uniquely determined. 
In this section, we construct a solution of the outer problem, 
and then we prove Theorem \ref{Theorem:A}. 
We first consider the existence of solutions of 
the outer problem. 

\begin{proposition}\label{Proposition:SolvabilityforOuter-}
Let $1/2<a_1<a<1$, $A\neq 0$ with $|A|\ll1$ and $\tau > t_0 \gg1$. 
For $\psi\in B_{\rm out}^\tau$, let 
$\lambda_\psi \in \Lambda^\tau$ and $\phi_\psi\in B_{\rm in}^\tau$ 
be determined by Propositions \ref{Proposition:SolvabilityforOurinner} 
and \ref{pro:solvinner}. 
Then there exists a solution 
$\psi\in B_{\rm out}^\tau$ of the outer problem 
\[
\left\{\begin{aligned}
	&\partial_t \psi - \Delta_x \psi 
	= \mathcal{H}[\phi_\psi, \psi, \lambda_\psi], 
	&&x\in \mathbf{R}^6, \; t_0<t<\tau, \\
	&\psi(\cdot,t_0)  = 0, 
	&&x\in \mathbf{R}^6, 
\end{aligned}
\right.
\]
where $\mathcal{H}$ is defined by \eqref{eq:defofG-}. 
\end{proposition}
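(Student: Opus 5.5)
We will obtain $\psi$ as a fixed point of the map $\mathcal{T}:B_{\rm out}^\tau\to X_{\rm out}^\tau$ given by Duhamel's formula
\[
	\mathcal{T}[\psi](x,t) := \int_{t_0}^t \int_{\mathbf{R}^6} G(x-z,t-s)\,\mathcal{H}[\phi_\psi,\psi,\lambda_\psi](z,s)\,dz\,ds,
\]
where $G$ is the six dimensional heat kernel. We apply the Schauder fixed point theorem on the closed convex set $B_{\rm out}^\tau$. Since $\tau<\infty$, compactness is obtained from parabolic regularity: $\mathcal{H}$ is bounded and decays like $|x|^{-4}$ at spatial infinity, so the image is equicontinuous in $C^{1,0}$ and uniformly small at infinity, which gives Arzel\`a--Ascoli compactness. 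Continuity of $\mathcal{T}$ follows from Lemmas \ref{Lemma:stabilityofparameters} and \ref{Lemma:stabilityofin} together with dominated convergence. Radial symmetry is preserved because every ingredient is radial. The main work is therefore the invariance $\mathcal{T}(B_{\rm out}^\tau)\subset B_{\rm out}^\tau$, meaning $|\mathcal{T}[\psi]|\le |A|w_{\rm out}$ and the gradient bound.

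For the invariance we use a barrier/weighted lemma for the heat equation. If $|f(x,t)|\lesssim t^{-2}R(t)^{-a_1}(\log t)^{-\epsilon}[\mathbf{1}_{|x|\le\sqrt t}+t^2|x|^{-4}\mathbf{1}_{|x|\ge\sqrt t}]$, then the Duhamel term is bounded by $o(1)\,t^{-1}R^{-a_1}[\mathbf{1}_{|x|\le\sqrt t}+t|x|^{-2}\mathbf{1}_{|x|\ge\sqrt t}]$. This follows by comparing with self-similar supersolutions of the form $t^{-1}R(t)^{-a_1}\Theta(|x|/\sqrt t)$. Source terms concentrated near the origin at scale $\lambda R$, of size $t^{-1}\lambda^{-2}\langle y\rangle^{-k}$, are handled by the standard estimate $\int\!\!\int G\,|f| \lesssim \int_{t_0}^t (t-s+|x|^2)^{-2}\lambda(s)^{4}\cdots ds$, using that such a source has total mass of order $\lambda^4\cdot\lambda^{-4}s^{-1}R^{2-a}$ with smallness coming from powers of $R$.

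We then estimate each piece of $\mathcal{H}$ in turn:
\begin{itemize}
\item[(i)] The cutoff errors $\tilde{\mathcal{E}}$. Their terms are supported in $|x|\sim\sqrt t$, where $U(y)\lesssim \lambda^4 t^{-2}$, so they are of size $\lambda^2 t^{-3}\ll t^{-2}R^{-1}$. The terms with $1-\eta_R$ involve $\lambda^{-2}U\theta_A\sim \lambda^{-2}|y|^{-4}t^{-1}|A|$ on $|y|\ge R$, and the $\dot\lambda$-term $\lambda^{-4}|A|t^{-1}\lambda^2|y|^{-4}$. These give the outer-region tails $|A|t^{-1}R^{-2}$ and are acceptable with an extra factor $R^{-2+a_1}$.
\item[(ii)] The terms in $\mathcal{E}$ involving $\phi$. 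Using $\|\phi_\psi\|_{\rm in}\lesssim|A|$ and $w_{\rm in}$, they are supported in $R\le|y|\le 2R$ with size $\lambda^{-4}R^{-2}s^{-1}R^{7-a}R^{-7}$, and the total mass gives an $R^{-a}$ gain over $R^{-a_1}$.
\item[(iii)] The term $2\lambda^{-2}U\psi(1-\eta_R)$, which is bounded by $|A|R^{-2}w_{\rm out}$ times $\lambda^{-2}|y|^{-4}$ and is small.
\item[(iv)] The nonlinearity $\mathcal{N}$. It is quadratic in the small quantities $\theta_A$, $\psi$ and $\lambda^{-2}\phi\eta_R$, relative to the pieces already linearized. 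In the region where $\lambda^{-2}U$ dominates, $\mathcal{N}\lesssim (|\theta_A|+|\psi|+\lambda^{-2}|\phi|)^2$. Elsewhere we also use $|\,|a+b|(a+b)-|a|a\,|\lesssim |b|(|a|+|b|)$ with $a=\theta_A$, which gives terms like $|A|^2 t^{-1}w_{\rm out}$; these are small because $|A|\ll1$ and $\theta_A$ already solves the equation.
\end{itemize}

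The gradient bound $|\nabla\mathcal{T}[\psi]|\le|A|t^{-1}\lambda_0^{-1}R^{-1-a_1}$ follows from the same kernel estimates with $|\nabla G|$. The worst contribution comes from the sources at scale $\lambda R$, which is why this bound involves $\lambda_0^{-1}R^{-1}$.

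We expect the main obstacle to be the linear-in-$\psi$ interaction $2\theta_A\psi$ hidden in $\mathcal{N}$, together with the matching of the $|x|^{-2}$ tail. This term is of the same order $|A|t^{-1}\cdot|A|w_{\rm out}$ as the weight, with no logarithmic gain. Our first approach is to absorb it using only the smallness of $|A|$. To do this we verify via the self-similar barrier that the heat operator maps $t^{-1}w_{\rm out}$ into $C\,w_{\rm out}$ with $C$ independent of $A$, which works since $t^{-1}\cdot t|x|^{-2}\cdot$ mass on $|x|\ge\sqrt t$ is borderline but the corresponding linear self-similar problem is resolvable. If that fails, we would instead place $2|\theta_A|\theta_A$-linearization $2|\theta_A|\psi$ on the left-hand side and use the linearized self-similar operator.
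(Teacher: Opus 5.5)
\textbf{The gap is in the critical term.} Your architecture matches the paper's. You run Schauder on $B_{\rm out}^\tau$ for the Duhamel map, take continuity from Lemmas \ref{Lemma:stabilityofparameters} and \ref{Lemma:stabilityofin}, and get compactness from Ascoli--Arzel\`a plus uniform decay. You also absorb the interaction $\theta_A\psi$ left after the cancellation of $|\theta_A|\theta_A$ using only $|A|\ll1$. The problem is the estimate you propose for that interaction, which is the crux. You treat it as $A^2t^{-1}w_{\rm out}$, in effect using $|\theta_A|\lesssim|A|t^{-1}$ everywhere. You then claim that $\mathcal{T}_{\rm out}$ maps $t^{-1}w_{\rm out}$ into $Cw_{\rm out}$. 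That claim is false. For $|x|\ge 10\sqrt t$ and $|z-x|\le\sqrt{t-s}$ one has $|z|\sim|x|\ge\sqrt s$, so
\[
\mathcal{T}_{\rm out}[t^{-1}w_{\rm out}](x,t)\gtrsim |x|^{-2}\int_{t_0}^{t}s^{-1}R(s)^{-a_1}\,ds\gtrsim (\log t_0)^{1-2a_1}|x|^{-2}\quad\mbox{for } t\ge t_0^2 .
\]
At the same points, $w_{\rm out}(x,t)=(\log(e+t))^{-2a_1}|x|^{-2}$. The ratio grows like $(\log t)^{2a_1}(\log t_0)^{1-2a_1}$, so no constant uniform in $t$ exists, and hence none uniform in $\tau$. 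In self-similar terms: a $\rho^{-2}$ tail is annihilated by $\Theta+\frac{\rho}{2}\Theta'$. Differentiating the decaying factor $R(t)^{-a_1}$ then produces a negative term of relative size $(\log t)^{-1}$. So no barrier $t^{-1}R(t)^{-a_1}\Theta(\rho)$ with $\Theta\lesssim\rho^{-2}$ can absorb a source with a $\rho^{-2}$ tail; the ``borderline but resolvable'' problem is not resolvable in this weight. The same objection hits $\psi^2$ if you bound $|\psi|\lesssim|A|t^{-1}R^{-a_1}$ in the far field.

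\textbf{The fix.} What is missing is that $\theta_A$ and $\psi$ themselves decay in the far field. By Lemma \ref{thm:Psi_two_sided} and the definition of $B_{\rm out}^\tau$, for $|x|\ge\sqrt t$ you have $|\theta_A|\lesssim|A||x|^{-2}$ and $|\psi|\le|A|R^{-a_1}|x|^{-2}$. Hence the relevant part of $\mathcal{N}$ is bounded by $A^2t^{-2}R^{-a_1}[\mathbf{1}_{|x|\le 2\sqrt t}+t^2|x|^{-4}\mathbf{1}_{|x|\ge\sqrt t}]$, which is \eqref{eq:contraction1-}. This beats $A^2t^{-1}w_{\rm out}$ by the factor $t|x|^{-2}$. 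For this source the Duhamel term is $\lesssim A^2w_{\rm out}$, with a constant independent of $A$ and $t_0$. The paper gets this from Lemmas \ref{Lemma:WZZ24A1} and \ref{Lemma:WZZ24A2} with $b=0,4$; the key point is $|x|^{-4}\int_{t_0}^{t}R(s)^{-a_1}ds\lesssim tR(t)^{-a_1}|x|^{-4}\le R(t)^{-a_1}|x|^{-2}$. In your barrier language, $Kt^{-1}R(t)^{-a_1}(1+|x|^2/t)^{-2}$ is a supersolution. Indeed $-\mathcal{L}[(1+\rho^2)^{-2}]=(23+\rho^4)(1+\rho^2)^{-4}\ge\frac{23}{24}(1+\rho^2)^{-2}$, which absorbs the $O((\log t)^{-1})$ error for $t_0$ large. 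So your own first-paragraph lemma, applied with an $O(1)$ constant instead of $o(1)$ and without the $(\log t)^{-\epsilon}$ gain, is exactly what is needed, and then $CA^2\le|A|$ closes. Moving $2|\theta_A|\psi$ to the left-hand side is unnecessary.

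\textbf{Gradient bound.} The paper avoids term-by-term $\nabla G$ estimates. It uses the interpolation inequality of \cite[Lemma 4.1]{LWZZ24} with radius $\lambda_0R^{1+(a-a_1)/2}$. This needs only the crude bound $|\mathcal{H}|\lesssim t^{-1-5A/2}R^{-2-a}$ together with $|\mathcal{T}_{\rm out}[\mathcal{H}]|\lesssim A^2t^{-1}R^{-a_1}$. Your direct route also works but must be checked piece by piece; for example, the $\phi$-cutoff terms give $t^{-1-5A/4}R^{-1-a}$, which is admissible only for $t_0$ large depending on $A$.
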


As a preliminary, we clarify the sign of $u$ in a smaller inner region 
$|x| <\lambda^{{1/2}}t^{1/4}\sim t^{(5A/8) + (1/4)}$ 
(see \eqref{eq:leadingmu_0-}), 
where the inner region corresponding to the localization by $\eta(\tilde y)$ 
is $|x|\lesssim t^{1/2}$ (see \eqref{eq:Ansatzu}). 

\begin{lemma}\label{Lemma:signofu}
For $\phi \in B_{\rm in}^\tau$, $\psi\in B_{\rm out}^\tau$ and
$\lambda\in \Lambda^\tau$, 
$u$ of the form \eqref{eq:Ansatzu} satisfies
$u(x,t)>0$ for $|x| <\lambda^{{1/2}}t^{{1/4}}$
with $A\neq 0$ and $|A|\ll1$.
\end{lemma}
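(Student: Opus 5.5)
The plan is to show that the bubble term dominates all other terms of \eqref{eq:Ansatzu} throughout the region $|x|<\lambda^{1/2}t^{1/4}$. Write $y=x/\lambda$, so the region is $|y|<\lambda^{-1/2}t^{1/4}=:r(t)$. By \eqref{eq:leadingmu_0-}, $r(t)\sim t^{1/4-5A/8}\to\infty$ and $\lambda r = \lambda^{1/2}t^{1/4}\ll \sqrt t$. Hence for $t\ge t_0\gg1$ we have $\eta(\tilde y)=1$, and the leading bubble term equals $\lambda^{-2}U(y)\gtrsim \lambda^{-2}\langle y\rangle^{-4}$. On the region this gives the lower bound
\[
\lambda^{-2}U(y)\gtrsim \lambda^{-2}\langle y\rangle^{-4}\gtrsim \lambda^{-2}\min\{1,r^{-4}\}=\lambda^{-2}\lambda^{2}t^{-1}=t^{-1}
\]
at the outer edge, and $\lambda^{-2}$ near the origin.

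Next I bound the remaining three terms. By \eqref{eq:SSSuni}, $|\theta_A|\lesssim |A|t^{-1}$. By \eqref{eq:psiesBout222}, $|\psi|\lesssim |A|t^{-1}R(t)^{-a_1}$. For the inner term, $\phi\in B^\tau_{\rm in}$ together with $\sigma\sim t^{1-5A/2}$ from \eqref{eq:behabioroftau-} and \eqref{eq:defoftauA-} gives
\[
\lambda^{-2}|\phi\,\eta_R|\lesssim \lambda^{-2}\sigma^{-1}R^{7-a}\langle y\rangle^{-7}\sim t^{-5A/2}t^{-1+5A/2}R^{7-a}\langle y\rangle^{-7}=t^{-1}R^{7-a}\langle y\rangle^{-7}.
\]
Compared with the bubble, this ratio is at most $\lambda^{2}t^{-1}R^{7-a}\langle y\rangle^{-3}\lesssim t^{-1+5A/2}(\log t)^{14}$, which tends to $0$ since $|A|\ll1$. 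The $\phi$ term is therefore negligible everywhere on the region.

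The main point is the comparison of the $\theta_A$ and $\psi$ terms with the bubble. Both are bounded by $C|A|t^{-1}$, while the bubble is at least $c\,\lambda^{-2}\langle y\rangle^{-4}\ge c\,\lambda^{-2}\langle r\rangle^{-4}$. Since $\langle r\rangle^{4}\le 2r^4=2\lambda^{-2}t$ for large $t$, the bubble is at least $(c/2)t^{-1}$ uniformly on the region. The sum of the $\theta_A$ and $\psi$ terms is then at most $C|A|t^{-1}<(c/4)t^{-1}$ once $|A|$ is small; the constants $c,C$ are independent of $A$ by Lemma \ref{thm:Psi_two_sided}. The $\phi$ term is below $(c/8)t^{-1}\lambda^{-2}\langle y\rangle^{-4}$-relative size for $t_0\gg1$. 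Adding these gives $u>0$, with the main obstacle being exactly this uniformity at the edge $|y|\sim r$, which is handled by the explicit radius choice.
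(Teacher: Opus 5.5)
Your proposal is correct and is essentially the paper's argument. On the whole region $|x|<\lambda^{1/2}t^{1/4}$ the bubble is at least a fixed multiple of $t^{-1}$ (at the edge $\lambda^{-2}U(y)\approx 24^2t^{-1}$), which beats the $O(|A|t^{-1})$ contributions of $\theta_A$ and $\psi$, and the $\phi$-term is negligible because $\sigma^{-1}R^{7-a}\to0$. The only difference is cosmetic: the paper splits into $|y|\le 2R$ and $2R<|y|<\lambda^{-1/2}t^{1/4}$ rather than comparing $\lambda^{-2}|\phi\,\eta_R|$ pointwise with $\lambda^{-2}U(y)$. Your garbled sentence about the $\phi$-term should simply read $\lambda^{-2}|\phi\,\eta_R|\le\frac18\lambda^{-2}U(y)$ for $t\ge t_0\gg1$.
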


\begin{proof}
If $2\lambda R < |x| <  \lambda^{1/2}t^{1/4} (< t^{1/2})$, 
then $|y|<\lambda^{-1/2}t^{1/4}$, $\eta(\tilde y)=1$ and $\eta_R(y,t)=0$. 
By  $U(y) = ( 1 + |y|^2/24 )^{-2}$, 
Lemma \ref{thm:Psi_two_sided} and $\psi\in B_{\rm out}^\tau$, we see that 
\[
\begin{aligned}
u(x,t) 
&= \lambda(t)^{-2} U\left( y \right) 
	+ \theta_A(x,t)    +\psi(x,t)\\
&> \lambda(t)^{-2} \left( 1 + \frac{\lambda^{-1} t^\frac{1}{2}}{24}  \right)^{-2} - |A|t^{-1} 
- |A| t^{-1} R^{-a_1}\\
&\ge 2^{-1} 24^2 t^{-1} - |A|\left[1+ R^{-a_1}\right]t^{-1} >0 \\
\end{aligned}
\]
for $1\ll t_0\leq t\leq \tau$.
If $|x|\le 2\lambda R$, equivalently $|y|\le 2R$, 
then by \eqref{eq:behabioroftau-}, 
$\phi\in B_{\rm in}^\tau$ and $\psi\in B_{\rm out}^\tau$, we have 
\[
\begin{aligned}
u(x,t) 
&= \lambda(t)^{-2} U\left( y \right) 
	+ \theta_A(x,t)  +  \lambda(t)^{-2} \phi\left( y, t \right) \eta_R +\psi(x,t)\\
&> \lambda(t)^{-2} U(2R) - |A|t^{-1} - \lambda(t)^{-2}\sigma(t)^{-1}R^{7-a} \langle y\rangle^{-7}
- |A| t^{-1} R^{-a_1}\\
&> \lambda(t)^{-2} \left( 1 + \frac{R^2}{6}  \right)^{-2} - |A|t^{-1}
- C \lambda(t)^{-2}t^{-1+\frac{5A}{2}}R^{7-a} 
- |A| t^{-1} R^{-a_1}\\
&> C^{-1} t^{-\frac{5A}{2}}R^{-4}-C t^{-1}R^{7-a}  - |A|\left[1 + R^{-a_1}\right]t^{-1} >0 \\
\end{aligned}
\]
for $1\ll t_0\leq t\leq \tau$.
Thus, the proof is complete.
\end{proof}

Let us prove Proposition \ref{Proposition:SolvabilityforOuter-} 
by means of several lemmas and Schauder's fixed point theorem. 
It suffices to find a fixed point $\psi \in B_{\rm out}^\tau$ for 
\begin{equation}\label{eq:defofout}
\begin{aligned}
	&\psi = \mathcal{S}_{\rm out}^\tau[\psi],  \quad 
	\mathcal{S}_{\rm out}^\tau[\psi]
	:= \mathcal{T}_{\rm out}[\mathcal{H}[\phi_\psi, \psi, \lambda_\psi]],  \\
	&\mathcal{T}_{\rm out} [f] 
	:= \int_{t_0}^t \int_{\mathbf{R}^6}
	(4\pi (t-s))^{-3} e^{ -\frac{|x-y|^2}{4(t-s)}} 
	f(y,s) dyds \quad \mbox{ for }f=f(x,t), 
\end{aligned}
\end{equation}
where $\mathcal{H}$ is given by \eqref{eq:defofG-}. 
Since $\lambda_\psi$ and $\phi_\psi$ can be uniquely determined 
for each $\psi$, $\mathcal{S}_{\rm out}^\tau[\psi]$ is well-defined as a map.
For estimating $\mathcal{T}_{\rm out}[\mathcal{H}]$, 
we prepare the following two lemmas 
by straightforward modifications of \cite{WZZ24} 
in the form needed for our computations.
In the sequel,  if $t_1\le t_2$, 
then we interpret $\int_{t_2}^{t_1}(\cdots) ds$ as $0$. 
This convention will be used 
when estimating $\int_{t_0}^{t/2} (\cdots) ds$ 
in the case where $t/2<t_0$, 
since we always work for $t\geq t_0$. 
Moreover, we also regard $\nabla^0$ as the identity map. 

\begin{lemma}\label{Lemma:WZZ24A1}
Assume that 
$v(t)\ge0$, $0\le l_1(t)\le l_2(t)\le C_*\sqrt{t}$ and 
$C_l^{-1} l_i(t) \le l_i(s) \le C_ll_i(t)$ ($i=1,2$) 
for all $t/2\le s\le t$ and $t\ge t_0 \ge0$, where $C_*>0$ and $C_i\ge1$. 
Then for $k=0,1$, the following inequality holds for $b=0$ and $b=4$: 
\[
\begin{aligned}
	&\left| 
	\nabla^k_x \left( \mathcal{T}_{\rm out} \left[ v(t) |x|^{-b} 
	\mathbf{1}_{\{l_1(t) \le |x| \le l_2(t)\}}\right] \right) \right|
	\lesssim
	t^{-3-\frac{k}{2}} 
	e^{-\frac{|x|^2}{16t}}\int_{t_0}^{\frac{t}{2}} v(s) 
	l_2(s)^{6-b}  ds\\
	&+ \sup_{s \in [t/2,t]} v(s) 
	\left\{
	\begin{aligned}
	&\left\{
	\begin{aligned}
	&l_2(t)^{2-k} &&\mbox{ if } b=0 \\
	&l_1(t)^{-2-k} &&\mbox{ if } b=4 
	\end{aligned}
	\right.
	&&\mbox{ for } |x|\le l_1(t),  \\
	&\left\{
	\begin{aligned}
	&l_2(t)^{2-k} &&\mbox{ if } b=0\\
	&|x|^{-2-k} &&\mbox{ if } b=4 \\
	\end{aligned}
	\right.
	&&\mbox{ for } l_1(t)< |x| \le l_2(t), \\
	&|x|^{-4-k}e^{-\frac{|x|^2}{16t}} l_2(t)^{6-b} 
	&&\mbox{ for } |x|>l_2(t), 
	\end{aligned}
	\right.
\end{aligned}
\]
where `$\lesssim$' is independent of $t_0$ 
and $v(s)$ is regarded as $0$ when $s<t_0$.
\end{lemma}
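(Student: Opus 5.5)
The plan is to estimate the Duhamel integral directly, splitting the time integral at $s=t/2$:
\[
\mathcal{T}_{\rm out}[f](x,t)=\int_{t_0}^{t/2}\!\int_{\mathbf{R}^6}(\cdots)\,dy\,ds+\int_{t/2}^{t}\!\int_{\mathbf{R}^6}(\cdots)\,dy\,ds=:I_1+I_2,
\]
with $f=v(s)|y|^{-b}\mathbf{1}_{\{l_1(s)\le|y|\le l_2(s)\}}$. We use the standard kernel bounds $|\nabla_x^k G(x-y,t-s)|\lesssim (t-s)^{-3-k/2}e^{-|x-y|^2/(8(t-s))}$ for $k=0,1$.

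\emph{Early times $s\le t/2$.} Here $t-s\sim t$ and $|y|\le l_2(s)\le C_*\sqrt{s}\lesssim\sqrt t$. We use $|x-y|^2\ge |x|^2/2-|y|^2$, together with $|y|^2/(t-s)\lesssim 1$, to obtain $e^{-|x-y|^2/(8(t-s))}\lesssim e^{-|x|^2/(16t)}$. Then
\[
|\nabla^k I_1|\lesssim t^{-3-k/2}e^{-|x|^2/(16t)}\int_{t_0}^{t/2}v(s)\int_{|y|\le l_2(s)}|y|^{-b}\,dy\,ds .
\]
For $b=0,4<6$ the inner integral is $\lesssim l_2(s)^{6-b}$, which gives the first term.

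\emph{Late times $s\in[t/2,t]$.} Bound $v(s)$ by its supremum and use the comparability $l_i(s)\sim l_i(t)$, so that the support lies in $\{c\,l_1(t)\le|y|\le C\,l_2(t)\}$. After integrating in $s$, what remains is essentially a Newtonian-potential bound. Concretely, $\int_{t/2}^t|\nabla^kG|\,ds\lesssim |x-y|^{-4-k}e^{-|x-y|^2/(16t)}$, so
\[
|\nabla^k I_2|\lesssim \sup_{[t/2,t]}v\int_{cl_1\le|y|\le Cl_2}|x-y|^{-4-k}|y|^{-b}\,dy .
\]
We split into the three regions of $|x|$.

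\emph{Case $|x|\le l_1$, $b=4$.} On the support $|y|\gtrsim l_1$, so $|y|^{-4}\lesssim l_1^{-4}$. The part $|x-y|\le l_1$ gives $\lesssim l_1^{-4}\,l_1^{2-k}=l_1^{-2-k}$. The part $|x-y|\ge l_1$, with $|y|\ge|x-y|/2$ roughly, gives $\int_{|z|\ge l_1}|z|^{-8-k}\,dz\sim l_1^{-2-k}$. Both match the claimed bound.

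\emph{Case $b=0$ (any $|x|\le l_2$).} We get $\int_{|y|\le Cl_2}|x-y|^{-4-k}\,dy\lesssim l_2^{2-k}$.

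\emph{Case $l_1<|x|\le l_2$, $b=4$.} Split $y$ into three pieces:
\begin{itemize}
\item $|y|\le|x|/2$: contributes $|x|^{-4-k}\int_{|y|\le|x|/2}|y|^{-4}\,dy\sim|x|^{-2-k}$;
\item $|y|\sim|x|$: contributes $|x|^{-4}\int_{|x-y|\le 3|x|}|x-y|^{-4-k}\,dy\sim|x|^{-2-k}$;
\item $|y|\ge2|x|$: contributes $\int|y|^{-8-k}\,dy\sim|x|^{-2-k}$.
\end{itemize}

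\emph{Case $|x|>l_2$.} If $|x|\ge 2Cl_2$, then $|x-y|\ge|x|/2$ on the support, which yields $|x|^{-4-k}e^{-|x|^2/(64t)}\int|y|^{-b}\,dy\lesssim|x|^{-4-k}e^{-c|x|^2/t}\,l_2^{6-b}$. Since $l_2\lesssim\sqrt t$, the exponent can be adjusted to $1/16$ at the cost of a constant: the loss $e^{c'|x|^2/t}$ is bounded unless $|x|\gg\sqrt t$, and there one retains the Gaussian from the kernel directly by using the $e^{-|x-y|^2/(8(t-s))}$ factor with $t-s\le t/2$. If $l_2<|x|<2Cl_2$, we fall back on the previous case's bounds; these are comparable to $|x|^{-4-k}l_2^{6-b}$ because $|x|\sim l_2$ and $e^{-|x|^2/(16t)}\sim1$ there (as $l_2\lesssim\sqrt t$).

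The main obstacle is bookkeeping: the constants $C_l,C_*$ blur the region boundaries, and the exponential factors must be tracked with exactly the constant $16$. The tool for this is the comparability above, together with the fact that $l_2\lesssim\sqrt t$ makes the Gaussian $\sim1$ for $|x|\lesssim l_2$. All implied constants depend only on $C_*,C_l$, and not on $t_0$, since $v$ is extended by zero for $s<t_0$.
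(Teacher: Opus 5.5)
Your proposal is correct and is essentially the argument the paper relies on. The paper records the gradient kernel bound $|\nabla_x G|\lesssim (t-s)^{-7/2}e^{-|x-y|^2/(5(t-s))}$ and then defers to the scheme of \cite[Lemma A.1]{WZZ24}: split the Duhamel integral at $s=t/2$, use a Gaussian bound for early times, and for late times use the time-integrated kernel $\lesssim |x-y|^{-4-k}$ followed by a case analysis in $|x|$.

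The one point to tighten is the far-field constant $16$ for $|x|\gg\sqrt t$. Keep half of the Gaussian in the time integral, so that $\int_{t/2}^t|\nabla^k G|\,ds\lesssim |x-y|^{-4-k}e^{-|x-y|^2/(8t)}$. Then apply $|x-y|^2\ge |x|^2/2-|y|^2$ with $|y|^2\lesssim t$, which gives $e^{-|x|^2/(16t)}$ directly.
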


\begin{proof}
We note that 
\begin{equation}\label{eq:kernelesti}
	\left| 
	\nabla_x \left( (4\pi (t-s))^{-3} e^{ -\frac{|x-y|^2}{4(t-s)}} \right)\right|
	\lesssim (t-s)^{-\frac{7}{2}} e^{-\frac{|x-y|^2}{5(t-s)}}. 
\end{equation}
Then, the desired inequality follows in the same way 
as \cite[Lemma A.1]{WZZ24}. 
\end{proof}

\begin{lemma}\label{Lemma:WZZ24A2}
Assume that $v(t)\ge0$ and  $t_0\ge0$. Then for $k=0,1$, 
the following inequality holds for $b=0$ and $b=4$: 
\[
\begin{aligned}
	&\left| \nabla^k_x \left( \mathcal{T}_{\rm out} 
	\left[ v(t) |x|^{-b} \mathbf{1}_{\{|x| \ge  \sqrt{t}\}}\right] \right)\right| \\
	&\lesssim 
	\left\{
	\begin{aligned}
	&t^{-\frac{b}{2}-\frac{k}{2}} \int_{t_0}^{\frac{t}{2}} v(s)  ds
	+ t^{1-\frac{k}{2}-\frac{b}{2}} \sup_{s \in [t/2,t]} v(s)
	&&\mbox{ for } |x|\le \sqrt{t},\\
	&t^{-\frac{k}{2}} |x|^{-b}\int_{t_0}^{\frac{t}{2}} v(s) ds 
	+ t^{1-\frac{k}{2}} |x|^{-b}\sup_{s\in[t/2,t]}v(s) 
	&&\mbox{ for } |x|>\sqrt{t}, \\
    \end{aligned}
    \right.
\end{aligned}
\]
where `$\lesssim$' is independent of $t_0$ and 
$v(s)$ is regarded as $0$ when $s<t_0$.
\end{lemma}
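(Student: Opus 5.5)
We need to estimate $\mathcal{T}_{\rm out}[f]$ with $f = v(t)|x|^{-b}\mathbf{1}_{\{|x|\ge\sqrt t\}}$, using the heat kernel $G(x,t)=(4\pi t)^{-3}e^{-|x|^2/4t}$ and its gradient bound \eqref{eq:kernelesti}. The plan is to split the time integral as $\int_{t_0}^{t/2}+\int_{t/2}^{t}$ and, in each piece, split the spatial integral according to whether $|y|$ is comparable to $|x|$.

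\textbf{Early times.} For $s\in[t_0,t/2]$ we have $t-s\sim t$. Hence $|\nabla_x^k G(x-y,t-s)|\lesssim t^{-3-k/2}e^{-|x-y|^2/(5t)}$. The spatial integral to control is
\[
	\int_{|y|\ge\sqrt s} t^{-3}e^{-|x-y|^2/(5t)}|y|^{-b}\,dy .
\]
\begin{itemize}
\item For $b=0$ this is bounded by $\int t^{-3}e^{-|x-y|^2/5t}dy\lesssim 1$. This gives the term $t^{-k/2}\int_{t_0}^{t/2}v\,ds$ in both regimes.
\item For $b=4$ and $|x|\le\sqrt t$: split into $|y|\le\sqrt t$ and $|y|\ge \sqrt t$. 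On $|y|\ge\sqrt t$, use $|y|^{-4}\le t^{-2}$. On $\sqrt s\le|y|\le\sqrt t$, bound the kernel by $t^{-3}$ and use $\int_{|y|\le\sqrt t}|y|^{-4}dy\lesssim t$. Both parts give $t^{-2}$.
\item For $b=4$ and $|x|>\sqrt t$: split into $|y|\ge|x|/2$, where $|y|^{-4}\lesssim|x|^{-4}$ and the kernel integrates to $O(1)$, and $|y|<|x|/2$. In the second region $|x-y|\ge|x|/2$, so the kernel is at most $t^{-3}e^{-|x|^2/(20t)}$. Combined with $\int_{|y|<|x|}|y|^{-4}dy\lesssim|x|^2$, this gives $t^{-3}|x|^2e^{-|x|^2/20t}\lesssim |x|^{-4}$, since $|x|^2/t\ge1$ and $r^{3}e^{-r/20}$ is bounded.
\end{itemize}
The factor $t^{-k/2}$ comes from the kernel derivative in every case.

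\textbf{Late times.} For $s\in[t/2,t]$, pull out $\sup_{[t/2,t]}v$. Note that $|y|\ge\sqrt s\ge\sqrt{t/2}$ on the support. Set $\tau=t-s\in[0,t/2]$.
\begin{itemize}
\item For $b=0$, use $\int|\nabla^kG(\cdot,\tau)|\lesssim\tau^{-k/2}$. Since $\int_0^{t/2}\tau^{-k/2}d\tau\lesssim t^{1-k/2}$ for $k=0,1$, this gives the bound $t^{1-k/2}$.
\item For $b=4$ and $|x|\le\sqrt t$: use $|y|^{-4}\lesssim t^{-2}$, which gives $t^{-1-k/2}$.
\item For $b=4$ and $|x|>\sqrt t$: split into $|y|\ge|x|/2$, giving $|x|^{-4}t^{1-k/2}$, and $|y|<|x|/2$. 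In the second region the kernel is bounded by $\tau^{-3-k/2}e^{-|x|^2/(20\tau)}$. With $\int_{|y|<|x|/2}|y|^{-4}dy\lesssim|x|^2$, its time integral is
\[
	|x|^2\int_0^{t/2}\tau^{-3-k/2}e^{-|x|^2/20\tau}\,d\tau\lesssim |x|^{2}|x|^{-4-k}\,t^{\,\cdots}.
\]
This is bounded by $|x|^{-4}t^{1-k/2}$ after using $|x|^2\ge t$.
\end{itemize}

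\textbf{Main obstacle.} The delicate step is the late-time near-diagonal contribution for $k=1$. Here the non-integrable singularity $\tau^{-7/2}$ must be absorbed by the Gaussian factor. This works because $|x-y|\gtrsim|x|$ off the region $|y|\ge |x|/2$, and on that region $|y|^{-b}$ is essentially constant. So the singularity is harmless, exactly as in \cite[Lemma A.2]{WZZ24}. Throughout, constants depend only on the kernel and not on $t_0$, with $v$ extended by zero for $s<t_0$.
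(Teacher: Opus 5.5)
Your route is right, and it is essentially the paper's: the paper just invokes \cite[Lemma A.2]{WZZ24} together with \eqref{eq:kernelesti}, and your split $s\in[t_0,t/2]\cup[t/2,t]$ combined with $|y|\gtrless|x|/2$ is the standard argument behind that citation. All the early-time estimates are correct as written, as are the late-time cases $b=0$ and $b=4$ with $|x|\le\sqrt t$.

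The one step that does not close as written is the one you call delicate: late times, $b=4$, $|x|>\sqrt t$, on the region $|y|<|x|/2$. Suppose you evaluate $\int_0^{t/2}\tau^{-3-k/2}e^{-|x|^2/(20\tau)}\,d\tau$ by its full-line value $\sim|x|^{-4-k}$, which is what your factor $|x|^2|x|^{-4-k}$ suggests. You then get $|x|^{-2-k}$, which exceeds the target $|x|^{-4}t^{1-k/2}$ by the factor $(|x|^2/t)^{1-k/2}\ge 1$. This factor is unbounded as $|x|/\sqrt t\to\infty$, so $|x|^2\ge t$ pushes in the wrong direction. The missing decay must come from the truncation $\tau\le t/2$, not from $|x|^2\ge t$. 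A clean way to get it is to note that $\tau^{-3}e^{-|x|^2/(20\tau)}\lesssim|x|^{-6}$ uniformly in $\tau>0$, because $u^3e^{-u}$ is bounded. Hence
\[
|x|^2\int_0^{t/2}\tau^{-3-\frac{k}{2}}e^{-\frac{|x|^2}{20\tau}}\,d\tau
\lesssim |x|^{-4}\int_0^{t/2}\tau^{-\frac{k}{2}}\,d\tau
\lesssim |x|^{-4}t^{1-\frac{k}{2}} ,
\]
which needs no $|x|^2\ge t$ at all. Alternatively, keep the incomplete-gamma tail $e^{-|x|^2/(10t)}$.

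Your ``main obstacle'' paragraph also slightly mislocates the difficulty. On the near-diagonal piece $|y|\ge|x|/2$ no Gaussian absorption is needed, since the $L^1$ norm of the gradient of the heat kernel at time $\tau$ is $\lesssim\tau^{-1/2}$, which is integrable at $\tau=0$. The Gaussian is needed only on the off-diagonal piece. With this repair your proof is complete, with constants independent of $t_0$.
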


\begin{proof}
This lemma follows from  \cite[Lemma A.2]{WZZ24} 
with \eqref{eq:kernelesti}. 
\end{proof}

\begin{remark}\label{rem:k1}
The estimates for $k=1$ in Lemmas \ref{Lemma:WZZ24A1}, \ref{Lemma:WZZ24A2}, 
\ref{lem:cN}, \ref{lem:cE} and \ref{lem:tilcE}
are not needed for showing the self-mapping property of $\mathcal{S}_{\rm out}^\tau$ 
in Lemma \ref{lem:selfcS}, 
and are only used to derive the spatial equi-decay property 
in the proof of Proposition \ref{Proposition:SolvabilityforOuter-}. 
In particular, estimates for $k=1$ in Lemmas 
\ref{lem:cN}, \ref{lem:cE} and \ref{lem:tilcE}
need not be sharp. 
\end{remark}

In what follows, we write 
$\phi=\phi_\psi$ and $\lambda=\lambda_\psi$ 
when no confusion can arise. 
Among the estimates for $\mathcal{T}_{\rm out}[\mathcal{H}]$ 
in \eqref{eq:defofout}, 
the most technical part is to handle $\mathcal{N}=\mathcal{N}[\phi,\psi,\lambda]$, 
which is given by \eqref{eq:defofN} 
and arises from the nonlinearity. 
Let us start with the estimate of $\mathcal{T}_{\rm out} [\mathcal{N}]$. 

\begin{lemma}\label{lem:cN}
Let  $|A|\ll1$ and $\tau > t_0 \gg1$. 
Then $| \nabla^k ( \mathcal{T}_{\rm out} [ \mathcal{N} ])| 
\lesssim A^2 t^{-\frac{k}{2}} w_{\rm out}$ 
for $t_0\leq t\leq \tau$, $\psi\in B_{\rm out}^\tau$ and $k=0,1$. 
\end{lemma}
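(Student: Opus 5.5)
The plan is to bound $\mathcal{N}$ pointwise by elementary inequalities and then push these bounds through $\mathcal{T}_{\rm out}$ with Lemmas \ref{Lemma:WZZ24A1} and \ref{Lemma:WZZ24A2}. Write $u = W + \theta_A + \Phi + \psi$ with $W:=\lambda^{-2}U(y)\eta(\tilde y)$ and $\Phi:=\lambda^{-2}\phi\eta_R$. Then
\[
\mathcal{N}=|W+\theta_A+\Phi+\psi|(W+\theta_A+\Phi+\psi)-|W|W-|\theta_A|\theta_A-2W(\theta_A+\Phi+\psi),
\]
since $W\ge0$.

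\textbf{Pointwise bounds.} I split $\mathbf{R}^6$ into three regions.

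In the inner region $|x|\le 2\lambda R$, Lemma \ref{Lemma:signofu} gives $u>0$ and $W>0$. The nonlinearity is then the smooth function $s\mapsto s^2$, so $\mathcal{N}=(\theta_A+\Phi+\psi)^2+2\theta_A W-2\theta_AW$ up to the $|\theta_A|\theta_A$ correction. This yields $|\mathcal{N}|\lesssim \theta_A^2+\Phi^2+\psi^2$. With $|\Phi|\lesssim\lambda^{-2}\sigma^{-1}R^{7-a}\langle y\rangle^{-7}$, this is $|\mathcal{N}|\lesssim A^2t^{-2}+\lambda^{-4}\sigma^{-2}R^{14-2a}\langle y\rangle^{-14}$.

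In the intermediate region $2\lambda R<|x|\le2\sqrt t$, we have $\Phi=0$. I use $||a+b|(a+b)-|a|a-2|a|b|\lesssim b^2$ for the pair $a=W+\theta_A$, $b=\psi$, and similarly for the pair $(W,\theta_A)$. This gives $|\mathcal{N}|\lesssim \psi^2+\theta_A^2+W|\theta_A|+W|\psi|$, up to the sign subtlety where $W+\theta_A$ changes sign, which the same inequality handles. The cross term $W|\theta_A|\lesssim |A|\lambda^{-2}t^{-1}\langle y\rangle^{-4}$ is the dangerous one. On $|y|\ge 2R$, however, it is small: $\lambda^{-2}\langle y\rangle^{-4}\lesssim \lambda^2|x|^{-4}$. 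So this piece is $|A|\lambda^2 t^{-1}|x|^{-4}\mathbf{1}_{2\lambda R\le|x|\le 2\sqrt t}$, which is the $b=4$ case of Lemma \ref{Lemma:WZZ24A1} with $l_1=\lambda R$, $l_2=\sqrt t$. I am reading the cross term $2W\theta_A$ as genuinely subtracted in $\mathcal{N}$, so that only the nonlinear remainder survives; in that reading, where $W\ge|\theta_A|$ the remainder is really $\lesssim\theta_A^2+\psi^2$, and $W|\theta_A|$ only arises where $W\lesssim|\theta_A|$, i.e. $|x|\gtrsim\lambda^{1/2}t^{1/4}$.

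In the outer region $|x|\ge\sqrt t$, we have $W=0$ beyond $2\sqrt t$, and $|\mathcal{N}|\lesssim|\theta_A||\psi|+\psi^2\lesssim A^2|x|^{-4}R^{-a_1}$.

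\textbf{Convolution estimates.} I would apply Lemma \ref{Lemma:WZZ24A1} (with $b=0,4$) and Lemma \ref{Lemma:WZZ24A2} to each piece:
\begin{itemize}
\item $A^2t^{-2}\mathbf{1}_{|x|\le\sqrt t}$ ($b=0$, $l_2=\sqrt t$) gives $A^2t^{-1}$ times an $e^{-|x|^2/16t}$ or $|x|^{-4}$ tail.
\item $A^2R^{-a_1}|x|^{-4}\mathbf{1}_{|x|\ge\sqrt t}$ (Lemma \ref{Lemma:WZZ24A2}) gives $A^2t^{-1}R^{-a_1}$ inside and $A^2R^{-a_1}|x|^{-2}$ outside.
\end{itemize}
Both are $\lesssim A^2w_{\rm out}$, since $R^{-a_1}\le1$ and $t^{-1}\le t^{-1}R^{-a_1}\cdot R^{a_1}$. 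For the $t^{-2}$ piece the loss of $R^{a_1}$ must be avoided, so I keep the better factor $t^{-1}\cdot t^{-1}\cdot t$ from the $\sup$ term together with the extra smallness of $\theta_A^2$ from the integral $\int^{t/2}s^{-2}s^3\,ds$. This is a point to check carefully. Should it fail, I would instead include $\theta_A^2$ through the self-similar structure: $\theta_A$ already solves the nonlinear equation, so its square ought not appear in $\mathcal{N}$ at all, only $\theta_A\psi$.

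The inner piece $\lambda^{-4}\sigma^{-2}R^{14-2a}\langle y\rangle^{-14}$ integrates to mass about $\lambda^2 t^{-2+5A}R^{14-2a}$, which with the $t^{-3}$ kernel is tiny compared with $t^{-1}R^{-a_1}$ thanks to the polynomial gap; near the origin the $\sup$ term gives $\lambda^{-2}t^{-2+5A}R^{\cdots}\ll t^{-1}R^{-a_1}$. The $|x|^{-4}$ piece in the matching region gives, by Lemma \ref{Lemma:WZZ24A1} with $b=4$, $|A|\lambda^2t^{-1}\min(l_1^{-2},|x|^{-2})$. This is bounded by $|A|t^{-1}R^{-2}$ inside, and the $\int^{t/2}$ term gives $t^{-3}\cdot|A|\lambda^2 \int s^{-1}s\,ds\lesssim|A|\lambda^2t^{-2}$. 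Both are $\lesssim A^2w_{\rm out}$ only if the prefactor carries a second factor of $A$ or a log gain; here $R^{-2}\ll R^{-a_1}$ with $t_0\gg1$ supplies the smallness, so the constant can be absorbed.

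\textbf{Main obstacle.} The main obstacle is the cross interaction $W\theta_A$ in the matching zone $\lambda R\lesssim|x|\lesssim\lambda^{1/2}t^{1/4}$ and beyond, where the sign of $u$ may change (for $A<0$). There the expansion $|a+b|(a+b)$ is not smooth, and I need the uniform inequality $||a+b|(a+b)-|a|a-|b|b|\le 2|a||b|$, together with the observation that one of $|a|,|b|$ is then comparable to the smaller of $W$ and $|\theta_A|$, to get the $\lambda^2|x|^{-4}$ decay. The $k=1$ bounds follow identically from the $k=1$ versions of the lemmas, which produce the extra $t^{-1/2}$ factor (or $|x|^{-1}\le t^{-1/2}$ outside), consistent with Remark \ref{rem:k1}.
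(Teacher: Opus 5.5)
There is a genuine gap, and it sits exactly at the step you flagged as ``a point to check carefully.'' Your intermediate-region bound on $2\lambda R<|x|\le 2\sqrt t$ keeps the term $\theta_A^2\sim A^2t^{-2}$, and your first convolution bullet then feeds $A^2t^{-2}\mathbf{1}_{|x|\le\sqrt t}$ into Lemma~\ref{Lemma:WZZ24A1}. That Duhamel term is genuinely of size $A^2t^{-1}$ on $|x|\le\sqrt t$. The $\int_{t_0}^{t/2}$ part is $t^{-3}\int_{t_0}^{t/2}A^2s^{-2}s^{3}\,ds\sim A^2t^{-1}$, and the $\sup$ part is $A^2t^{-2}\cdot t$. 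Because the source is nonnegative and has unit-order mass at scale $\sqrt t$, no sharper estimate can improve this. So there is no ``extra smallness.'' Your inequality $t^{-1}\le t^{-1}R^{-a_1}\cdot R^{a_1}$ only gives $A^2t^{-1}\le R^{a_1}A^2w_{\rm out}$, and $R^{a_1}=(\log(e+t))^{2a_1}\to\infty$. Relatedly, the obstacle you single out, $W\theta_A$, is harmless. On $|x|\ge2\lambda R$ it is $\lesssim|A|\lambda^2t^{-1}|x|^{-4}$ and contributes only $|A|t^{-1}R^{-2}$, as you compute yourself.

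What is missing is the cancellation of $|\theta_A|\theta_A$ in $\mathcal N$, which is the heart of the paper's proof. Wherever the bubble is not dominant, you must Taylor-expand $s\mapsto|s|s$ around $\theta_A$, not around $W+\theta_A$. The paper splits at $|x|=\lambda^{1/2}t^{1/4}R^{1/2}$, beyond which $\eta_R=0$. There it writes $\mathcal N=2B\int_0^1|\theta_A+\alpha B|\,d\alpha-W^2-2W(\theta_A+\psi)$ with $B=W+\psi$. This gives $|\mathcal N|\lesssim W^2+\psi^2+W|\theta_A|+|\psi||\theta_A|$, with no $\theta_A^2$. In that region $W\lesssim\lambda^2|x|^{-4}\lesssim t^{-1}R^{-2}$, so everything is $\lesssim A^2t^{-2}R^{-a_1}\mathbf{1}_{|x|\le2\sqrt t}+A^2R^{-a_1}|x|^{-4}\mathbf{1}_{|x|\ge\sqrt t}$, with $t_0$ depending on $A$. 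The needed factor $R^{-a_1}$ now comes from $|\psi||\theta_A|$.

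Cruder bounds of size $\lesssim t^{-2+\varepsilon}$ are used only in $|x|<\lambda^{1/2}t^{1/4}R^{1/2}\sim t^{1/4+5A/8}R^{1/2}$. These are $\theta_A^2+\Phi^2+\psi^2$ where $u>0$, and the same $\theta_A$-expansion (giving $\lesssim t^{-2}$) in between. Their Duhamel contribution is polynomially smaller than $t^{-1}R^{-a_1}$ because the support radius is $\ll\sqrt t$.

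Your parenthetical remark, that the $\theta_A^2$ remainder only arises where $W\gtrsim|\theta_A|$, points in this direction, but you never carried it into the estimates. Your fallback sentence is not correct as stated. $\theta_A^2$ does survive near the bubble: for $A<0$, $\mathcal N=(\theta_A+\Phi+\psi)^2-|\theta_A|\theta_A$ contains $2\theta_A^2$ there. The point is that it survives only in a region of radius $\ll\sqrt t$.
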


\begin{proof}
We divide $\mathcal{N}$ into the following three parts: 
\[
\begin{aligned}
\mathcal{N}(x,t) 
	&= \mathcal{N}_{\rm in}(x,t)
	+ \mathcal{N}_{\rm mid}(x,t) 
	+\mathcal{N}_{\rm out}(x,t)\\
	&:= \mathcal{N} \mathbf{1}_{|x|< \lambda^{\frac{1}{2}}t^\frac{1}{4} } 
	+ \mathcal{N}  \mathbf{1}_{\lambda^{\frac{1}{2}}t^\frac{1}{4} 
	\le|x|< \lambda^{\frac{1}{2}}t^\frac{1}{4} R^{\frac{1}{2}}}
	+ \mathcal{N}  \mathbf{1}_{\lambda^{\frac{1}{2}}
	t^\frac{1}{4} R^{\frac{1}{2}}\le|x| }. 
\end{aligned}
\]
First, we examine the most crucial part $\mathcal{N}_{\rm out}$. 
Since the inner region corresponding to the localization by $\eta_R(y,t)$ 
is $|x|\lesssim \lambda R \sim t^{5A/4} R$ 
(see \eqref{eq:leadingmu_0-}) 
and the growth of $R(t)$ is subpolynomial, 
we see that 
$\eta_R(y,t)=0$ for 
$|x|\geq \lambda^{1/2}t^{1/4} R^{1/2} 
\sim t^{(5A/2)+(1/4)} R^{1/2}$. 
Thus, \eqref{eq:defofN} together with \eqref{eq:Ansatzu} 
and the mean value formula yields 
\begin{equation}\label{eq:Nouteqcomp}
\begin{aligned}
	\mathcal{N}_{\rm out}
	&=
	\Big( |\lambda^{-2} U( y) 
	\eta( \tilde{y} ) 
	+ \theta_A(x,t) + \psi(x,t)| 
	\left( \lambda^{-2} U( y) 
	\eta\left( \tilde{y} \right) 
	+ \theta_A + \psi \right)\\
	&\quad -\lambda^{-4} 
	U(y)^2 \eta(\tilde{y})^2 
	- |\theta_A|\theta_A
	-2 \lambda^{-2} U(y) \eta(\tilde{y})
	(\theta_A+\psi) \Big) \mathbf{1}_{\lambda^{\frac{1}{2}}
	t^\frac{1}{4} R^{\frac{1}{2}}\le|x| }\\
	&= 
	\Big( 2(\lambda^{-2} U( y) 
	\eta( \tilde{y} ) + \psi) 
	\int_0^1 
	\left|\theta_A + \alpha \left( \lambda^{-2} U( y) 
	\eta\left( \tilde{y} \right)  + \psi \right)\right| d\alpha
	\\
	&\quad -\lambda^{-4} 
	U(y)^2 \eta(\tilde{y})^2 
	-2 \lambda^{-2} U(y) \eta(\tilde{y})
	(\theta_A+\psi) \Big) 
	\mathbf{1}_{\lambda^{\frac{1}{2}}
	t^\frac{1}{4} R^{\frac{1}{2}}\le|x| }, 
\end{aligned}
\end{equation}
and so by Young's inequality, 
\[
	|\mathcal{N}_{\rm out}|
	\lesssim 
	\left( \lambda^{-4} U( y)^2 \eta( \tilde{y} )^2  + \psi^2 
	+ \lambda^{-2} U( y) \eta( \tilde{y} ) |\theta_A| 
	+ |\psi| |\theta_A|  \right) 
	\mathbf{1}_{\lambda^{\frac{1}{2}}
	t^\frac{1}{4} R^{\frac{1}{2}}\le|x| }. 
\]

We recall that $U(y) = ( 1 + |y|^2/24 )^{-2}$, 
$\lambda(t)\sim t^{5A/4}$ by \eqref{eq:leadingmu_0-} and 
\begin{align}
	&\label{eq:psiestisp}
	|\psi(x,t)|\leq |A|t^{-1} R(t)^{-a_1}
	[\mathbf{1}_{|x| \le \sqrt{t}} 
	+ t|x|^{-2} \mathbf{1}_{|x| \ge \sqrt{t}}], \\
	&\notag 
	|\theta_A(x,t)|\lesssim |A| t^{-1}
	[ \mathbf{1}_{|x| \le \sqrt{t}}
	+ t|x|^{-2} \mathbf{1}_{|x| \ge \sqrt{t}}], 
\end{align}
by \eqref{eq:spaceforouter-} and \eqref{eq:SSS}, respectively.  
Then, we observe from Young's inequality that 
\[
\begin{aligned}
	|\mathcal{N}_{\rm out}|
	&\lesssim 
	\Big( t^{-5A}  \langle y\rangle^{-8} \eta( \tilde{y} )^2 
	+A^2t^{-2}R^{-2a_1}\left[\mathbf{1}_{|x| \le \sqrt{t}} 
	+ t|x|^{-2} \mathbf{1}_{|x| \ge \sqrt{t}}\right]^2 \\
	&\quad +t^{-\frac{5A}{2}}\langle y\rangle^{-4} \eta( \tilde{y} )
	\times |A|t^{-1} 
	\left[\mathbf{1}_{|x| \le \sqrt{t}} 
	+ t|x|^{-2} \mathbf{1}_{|x| \ge \sqrt{t}}\right] \\
	&\quad +|A|t^{-1} R(t)^{-a_1} \times |A|t^{-1} 
	\left[\mathbf{1}_{|x| \le \sqrt{t}} 
	+ t|x|^{-2} \mathbf{1}_{|x| \ge \sqrt{t}}\right]^2  \Big) 
	\mathbf{1}_{\lambda^{\frac{1}{2}}t^\frac{1}{4}R^\frac{1}{2}\le|x|}\\
	&\lesssim 
	t^{-5A}  \langle y\rangle^{-8} \eta( \tilde{y} )^2 (1+R^{2a_1}) 
	\mathbf{1}_{\lambda^{\frac{1}{2}}t^\frac{1}{4}R^\frac{1}{2}\le|x|} \\
	&\quad +A^2t^{-2}(R^{-2a_1}+R^{-a_1})\left[\mathbf{1}_{|x| \le \sqrt{t}} 
	+ t|x|^{-2} \mathbf{1}_{|x| \ge \sqrt{t}}\right]^2 
	\mathbf{1}_{\lambda^{\frac{1}{2}}t^\frac{1}{4}R^\frac{1}{2}\le|x|}. 
\end{aligned}
\]
If $|x|\geq \lambda^{{1/2}}t^{1/4}R^{1/2}$, 
then $|y|\geq \lambda^{-{1/2}}t^{1/4}R^{1/2} 
\sim t^{-(5A/8)+(1/4)}R^{1/2}$.  
This observation with $1/2<a_1<1$ 
and $R(t)\to\infty$ as $t\to\infty$ yield 
\begin{equation}\label{eq:contraction1-}
\begin{aligned}
	|\mathcal{N}_{\rm out}|
	&\lesssim 
	t^{-2} R^{-4+2a_1} \mathbf{1}_{|x| \le 2\sqrt{t}}
	\mathbf{1}_{\lambda^{\frac{1}{2}}t^\frac{1}{4}R^\frac{1}{2}\le|x|} \\
	&\quad +A^2t^{-2} R^{-a_1}\left[\mathbf{1}_{|x| \le \sqrt{t}} 
	+ t^2 |x|^{-4} \mathbf{1}_{|x| \ge \sqrt{t}}\right] 
	\mathbf{1}_{\lambda^{\frac{1}{2}}t^\frac{1}{4}R^\frac{1}{2}\le|x|} \\
	&\lesssim 
	A^2t^{-2} R^{-a_1} \mathbf{1}_{|x| \le 2\sqrt{t}} 
	+ A^2 R^{-a_1} |x|^{-4} \mathbf{1}_{|x| \ge \sqrt{t}}
\end{aligned}
\end{equation} 
for $1\ll t_0\leq t\leq \tau$, where $t_0$ depends on $A^2$. 

We estimate $\mathcal{T}_{\rm out}[\mathcal{N}_{\rm out}]$, 
where $\mathcal{T}_{\rm out}$ is given in \eqref{eq:defofout}. 
As for the first term 
$A^2t^{-2} R^{-a_1}\mathbf{1}_{|x| \le 2\sqrt{t}}$ in 
the right-hand side of \eqref{eq:contraction1-}, 
from Lemma \ref{Lemma:WZZ24A1} with $v(t)= A^2 t^{-2} R(t)^{-a_1}$, 
$b=0$, $l_1(t)=0$ and $l_2(t)=2\sqrt{t}$, it follows that 
\[
\begin{aligned}
	&\left| \nabla^k \left( \mathcal{T}_{\rm out} 
	[ A^2t^{-2} R^{-a_1}\mathbf{1}_{|x| \le 2\sqrt{t}} ] \right)\right|
	\lesssim 
	t^{-3-\frac{k}{2}} e^{-\frac{|x|^2}{16t}} 
	\int_{t_0}^\frac{t}{2} 
	A^2 s^{-2} R(s)^{-a_1} 
	(\sqrt{s})^6 ds  \\
	&+\sup_{s\in [t/2,t]} 
	A^2 s^{-2} R(s)^{-a_1} \left(  (\sqrt{t})^{2-k} 
	\mathbf{1}_{|x| \leq 2\sqrt{t}}
	+ |x|^{-4-k} e^{-\frac{|x|^2}{16t}} 
	(\sqrt{t})^6
	\mathbf{1}_{|x| \ge \sqrt{t}} \right) 
\end{aligned}
\]
for $k=0,1$. 
Let $0<\varepsilon<1$. 
By \eqref{eq:defR}, the function $s\mapsto s^\varepsilon R(s)^{-a_1}$ is 
increasing for $s\gg1$. 
Moreover, we observe that 
\begin{equation}\label{eq:kernelexp}
	e^{-\frac{|x|^2}{16t}} 
	= 
	e^{-\frac{|x|^2}{16t}}
	\mathbf{1}_{|x| \leq \sqrt{t}}
	+ e^{-\frac{|x|^2}{16t}} \mathbf{1}_{|x| \ge \sqrt{t}}
	\lesssim 
	\mathbf{1}_{|x| \leq \sqrt{t}}
	+ t |x|^{-2}  \mathbf{1}_{|x| \ge \sqrt{t}}
\end{equation}
and that $t^3 |x|^{-4-k}  \mathbf{1}_{|x| \ge \sqrt{t}} \leq 
t^{2-(k/2)} |x|^{-2}  \mathbf{1}_{|x| \ge \sqrt{t}}$. 
These together with the definition of 
$w_{\rm out}$ in \eqref{eq:spaceforouter-} yield  
\begin{equation}\label{eq:Noutint}
\begin{aligned}
	&\left| \nabla^k \left( \mathcal{T}_{\rm out} 
	[ A^2t^{-2} R^{-a_1}\mathbf{1}_{|x| \le 2\sqrt{t}} ] \right) \right| \\
	&\lesssim 
	A^2 t^{-3-\frac{k}{2}+\varepsilon}  R(t)^{-a_1}
	\int_{t_0}^\frac{t}{2} s^{1-\varepsilon} ds 
	\left( \mathbf{1}_{|x| \leq \sqrt{t}}
	+ t |x|^{-2}  \mathbf{1}_{|x| \ge \sqrt{t}} \right)
	 \\
	&\quad 
	+ A^2 t^\varepsilon R(t)^{-a_1}
	\sup_{s\in [t/2,t]} s^{-2-\varepsilon} \left(  t^{1-\frac{k}{2}} 
	\mathbf{1}_{|x| \leq 2\sqrt{t}}
	+ t^3 |x|^{-4-k}  \mathbf{1}_{|x| \ge \sqrt{t}} \right) \\
	&\lesssim 
	A^2 t^{-1-\frac{k}{2}} R(t)^{-a_1}
	\left( \mathbf{1}_{|x| \leq \sqrt{t}}
	+ t |x|^{-2}  \mathbf{1}_{|x| \ge \sqrt{t}} \right)
	= A^2 t^{-\frac{k}{2}} w_{\rm out}(x,t). 
\end{aligned}
\end{equation}

As for the second term 
$A^2 R^{-a_1} |x|^{-4} \mathbf{1}_{|x| \ge \sqrt{t}}$
in the right-hand side of \eqref{eq:contraction1-}, 
we apply Lemma \ref{Lemma:WZZ24A2} 
with $v(t)=A^2 R(t)^{-a_1}$ and $b=4$. Then, 
\[
\begin{aligned}
	&\left| \nabla^k \left( 
	\mathcal{T}_{\rm out} [ A^2 R^{-a_1} |x|^{-4} 
	\mathbf{1}_{|x| \ge \sqrt{t}}] \right) \right| \\
	&\lesssim 
	\left( t^{-2-\frac{k}{2}} \int_{t_0}^\frac{t}{2} 
	A^2 R(s)^{-a_1}  ds 
	+ t^{-1-\frac{k}{2}} \sup_{s\in [t/2,t]} A^2 R(s)^{-a_1} \right) 
	\mathbf{1}_{|x| \leq \sqrt{t}} \\
	&\quad 
	+ \left( t^{-\frac{k}{2}} |x|^{-4}
	\int_{t_0}^\frac{t}{2} A^2 R(s)^{-a_1} ds
	+ t^{1-\frac{k}{2}} |x|^{-4} \sup_{s\in [t/2,t]} A^2 R(s)^{-a_1}
	\right) \mathbf{1}_{|x| \geq \sqrt{t}}. 
\end{aligned}
\]
Again by the monotonicity of $s\mapsto s^\varepsilon R(s)^{-a_1}$, we see that 
\[
\begin{aligned}
	&\left| \nabla^k \left( \mathcal{T}_{\rm out} [ A^2 R^{-a_1} |x|^{-4} 
	\mathbf{1}_{|x| \ge \sqrt{t}}] \right) \right|  \\
	&\lesssim 
	A^2 \left( t^{-2-\frac{k}{2}+\varepsilon} R(t)^{-a_1}
	\int_{t_0}^\frac{t}{2} s^{-\varepsilon} ds 
	+ t^{-1-\frac{k}{2}+\varepsilon} R(t)^{-a_1} \sup_{s\in [t/s,t]} s^{-\varepsilon} \right) 
	\mathbf{1}_{|x| \leq \sqrt{t}} \\
	&\quad 
	+ A^2 |x|^{-4} \left( 
	t^{-\frac{k}{2} + \varepsilon} R(t)^{-a_1} \int_{t_0}^\frac{t}{2} s^{-\varepsilon} ds
	+ t^{1-\frac{k}{2}+\varepsilon} R(t)^{-a_1} \sup_{s\in [t/2,t]} s^{-\varepsilon} 
	\right) \mathbf{1}_{|x| \geq \sqrt{t}}  \\
	&\leq 
	A^2 t^{-1-\frac{k}{2}} R(t)^{-a_1}
	\left( \mathbf{1}_{|x| \leq \sqrt{t}}
	+ t |x|^{-2}  \mathbf{1}_{|x| \ge \sqrt{t}} \right)
	= A^2 t^{-\frac{k}{2}} w_{\rm out}(x,t). 
\end{aligned}
\]
Hence we obtain 
\begin{equation}\label{eq:ToutNoutesti}
	\left| \nabla^k \left( \mathcal{T}_{\rm out} [ \mathcal{N}_{\rm out} ] 
	\right) \right| 
	\lesssim A^2 t^{-\frac{k}{2}} w_{\rm out}
	\quad \mbox{ for }k=0,1. 
\end{equation}

We next consider $\mathcal{N}_{\rm in}$. 
It can be handled more simply than $\mathcal{N}_{\rm out}$.
In this region, $u>0$ by Lemma \ref{Lemma:signofu}. 
Then, from \eqref{eq:defofN} and Young's inequality, 
it follows that 
\[
\begin{aligned}
	|\mathcal{N}_{\rm in}|
	&=\Big| (\lambda^{-2} U \eta 
	+ \theta_A + \lambda^{-2} \phi \eta_R + \psi )^2 
	-\lambda^{-4} U^2 \eta^2 
	- |\theta_A|\theta_A
	\\
	&\quad -2 \lambda^{-2} U \eta
	(\theta_A+\psi+ \lambda^{-2}\phi\eta_R) \Big| 
	\mathbf{1}_{|x|< \lambda^{\frac{1}{2}}t^\frac{1}{4} }, \\
	&=\left| (\theta_A + \lambda^{-2} \phi \eta_R + \psi )^2 
	- |\theta_A|\theta_A  \right| 
	\mathbf{1}_{|x|< \lambda^{\frac{1}{2}}t^\frac{1}{4} } \\
	&\lesssim 
	\left( \lambda^{-4} \phi(y,t)^2 \eta_R(y,t)^2 
	+ \psi(x,t)^2  + \theta_A(x,t)^2  \right) 
	\mathbf{1}_{|x|< \lambda^{\frac{1}{2}}t^\frac{1}{4} }. 
\end{aligned}
\]
We recall from \eqref{eq:behabioroftau-}, 
\eqref{eq:spaceforinner-}, 
\eqref{eq:psiestisp}, \eqref{eq:SSS} and \eqref{eq:leadingmu_0-} that 
\begin{align}
	&\notag 
	|\phi(y,t)|\leq t^{-1+\frac{5A}{2}} R^{7-a} \langle y \rangle^{-7}, 
	\quad |\theta_A(x,t)|\lesssim |A| t^{-1}, \\
	&\label{eq:psiesBout}
	|\psi(x,t)|\leq |A|t^{-1} R(t)^{-a_1}
	[\mathbf{1}_{|x| \le \sqrt{t}} 
	+ t|x|^{-2} \mathbf{1}_{|x| > \sqrt{t}}]
	\lesssim |A|t^{-1} R(t)^{-a_1}. 
\end{align}
Since $\lambda(t)\sim t^{5A/4}$ (see \eqref{eq:leadingmu_0-}) and 
the localized region by $\eta_R(y,t)$ is $|y|\leq 2R$, 
equivalently $|x|\leq 2\lambda R$, we see that 
\[
	|\mathcal{N}_{\rm in}| \lesssim \Big( 
	t^{-2} R^{14-2a} \langle y \rangle^{-14} 
	\mathbf{1}_{|x| \le 2\lambda R} 
	+ A^2 t^{-2} R^{-2a_1} 
	+ A^2 t^{-2} 
	\Big) \mathbf{1}_{|x|< \lambda^{\frac{1}{2}}t^\frac{1}{4} }. 
\]
Then, from $\langle y \rangle\geq 1$, $|A|\ll 1$ 
and the subpolynomial growth of $R$, 
it follows that  
\begin{equation}\label{eq:Ninpw}
	|\mathcal{N}_{\rm in}| \lesssim 
	t^{-2} R^{14-2a} \mathbf{1}_{|x| \le 2 \lambda R} 
	+ A^2 t^{-2} \mathbf{1}_{|x|< \lambda^{\frac{1}{2}}t^\frac{1}{4} }
	\lesssim 
	A^2 t^{-2+\varepsilon} \mathbf{1}_{|x|< 2t^{\frac{5A}{8}+\frac{1}{4}}}
\end{equation}
for $1\ll t_0\leq t\leq \tau$, where $0<\varepsilon<1$ is a small constant. 
We observe that the region $|x|< 2t^{(5A/8)+(1/4)}$ 
is strictly smaller than $|x|\leq \sqrt{t}$ which appeared in \eqref{eq:Noutint}. 
Therefore, the smallness of $\varepsilon$ and 
the difference of the order between 
$t^{(5A/8)+(1/4)}$ and $\sqrt{t}$ for $t\geq t_0\gg1$ 
may imply that 
\begin{equation}\label{eq:ToutNinesti}
	\left| \nabla^k \left( \mathcal{T}_{\rm out} [ \mathcal{N}_{\rm in} ]
	\right) \right| 
	\ll A^2 t^{-\frac{k}{2}} w_{\rm out} 
	\quad \mbox{ for }k=0,1. 
\end{equation}

Let us check that \eqref{eq:ToutNinesti} indeed holds. 
By Lemma \ref{Lemma:WZZ24A1} 
with $v(t)=A^2 t^{-2+\varepsilon}$, $b=0$, $l_1(t)=0$ 
and $l_2(t)=2t^{(5A/8)+(1/4)}$, 
\[
\begin{aligned}
	&\left|\nabla^k \left( 
	\mathcal{T}_{\rm out}\left[\mathcal{N}_{\rm in}\right] \right)\right| 
	\lesssim 
	t^{-3-\frac{k}{2}} e^{-\frac{|x|^2}{16t}} 
	\int_{t_0}^\frac{t}{2} 
	A^2 s^{-2+\varepsilon} (s^{\frac{5A}{8}+\frac{1}{4}})^6 ds \\
	&\quad + A^2 t^{-2+\varepsilon} 
	\left( 
	(t^{\frac{5A}{8}+\frac{1}{4}})^{2-k} 
	\mathbf{1}_{|x|\leq 2t^{\frac{5A}{8}+\frac{1}{4}}} 
	+ |x|^{-4-k} e^{-\frac{|x|^2}{16t}} 
	(t^{\frac{5A}{8}+\frac{1}{4}})^6 
	\mathbf{1}_{|x|\geq 2t^{\frac{5A}{8}+\frac{1}{4}}} \right) \\
	&\lesssim 
	A^2 t^{-\frac{5}{2}+\frac{15A}{4}-\frac{k}{2}+\varepsilon}  e^{-\frac{|x|^2}{16t}} 
	+ A^2 t^{-\frac{3}{2}+\frac{5A}{4} -(\frac{5A}{8} + \frac{1}{4})k+\varepsilon}  
	\mathbf{1}_{|x|\leq 2t^{\frac{5A}{8}+\frac{1}{4}}} \\
	&\quad 
	+ A^2 t^{-\frac{1}{2}+\frac{15A}{4}+\varepsilon} 
	|x|^{-4-k} e^{-\frac{|x|^2}{16t}}  
	\mathbf{1}_{|x|\geq 2t^{\frac{5A}{8}+\frac{1}{4}}}. 
\end{aligned}
\]
By using \eqref{eq:kernelexp}, 
$\mathbf{1}_{|x|\leq 2t^{(5A/8)+(1/4)}} 
\leq \mathbf{1}_{|x|\leq \sqrt{t}}$, 
$|x|^{-4-k} \mathbf{1}_{|x|\geq 2t^{(5A/8) + (1/4) }} 
\lesssim t^{-(5A/2)-1}$ and 
$w_{\rm out}$ (see \eqref{eq:spaceforouter-}), 
\[
\begin{aligned}
	&\left|\nabla^k \left( 
	\mathcal{T}_{\rm out}\left[\mathcal{N}_{\rm in}\right] \right)\right| 
	\lesssim 
	A^2 t^{-\frac{5}{2}+\frac{15A}{4}-\frac{k}{2}+\varepsilon}  
	[\mathbf{1}_{|x| \leq \sqrt{t}}
	+ t |x|^{-2}  \mathbf{1}_{|x| \ge \sqrt{t}}] \\
	&\quad 
	+ A^2 t^{-\frac{3}{2}+\frac{5A}{4} -(\frac{5A}{8}+\frac{1}{4})k  +\varepsilon}  
	\mathbf{1}_{|x|\leq \sqrt{t}} 
	+ A^2 t^{-\frac{3}{2}+\frac{5A}{4} -(\frac{5A}{8}+\frac{1}{4})k
	+\varepsilon}  e^{-\frac{|x|^2}{16t}}  \\
	&\lesssim 
	A^2 t^{-\frac{3}{2}+\frac{15A}{4}-\frac{k}{2}+\varepsilon} R^{a_1} w_{\rm out} 
	+ 2 A^2 t^{-\frac{1}{2}+\frac{5A}{4}-(\frac{5A}{8}+\frac{1}{4})k
	+\varepsilon} R^{a_1} w_{\rm out} 
	\ll A^2 t^{-\frac{k}{2}} w_{\rm out}
\end{aligned}
\]
for $k=0,1$. Hence we obtain \eqref{eq:ToutNinesti}. 

Finally, we consider $\mathcal{N}_{\rm mid}$. 
It can be handled by the same method as 
for $\mathcal{N}_{\rm in}$ and $\mathcal{N}_{\rm out}$. 
By \eqref{eq:defofN},  $\eta_R(y,t)=0$ for 
$|x|\geq \lambda^{1/2} t^{1/4}$ 
and by the same applications of the mean value formula  
and Young's inequality as for $\mathcal{N}_{\rm out}$, 
\[
\begin{aligned}
	&|\mathcal{N}_{\rm mid}|
	=\Big| |\lambda^{-2} U \eta 
	+ \theta_A  + \psi|
	(\lambda^{-2} U \eta 
	+ \theta_A + \psi) \\
	&\quad 
	-\lambda^{-4} U^2 \eta^2 - |\theta_A|\theta_A 
	-2 \lambda^{-2} U \eta
	(\theta_A+\psi) \Big|
	\mathbf{1}_{\lambda^{\frac{1}{2}}t^\frac{1}{4} 
	\le|x|< \lambda^{\frac{1}{2}}t^\frac{1}{4} R^{\frac{1}{2}}} \\
	&\leq 
	\left( \lambda^{-4} U( y)^2 \eta( \tilde{y} )^2  + \psi^2 
	+ \lambda^{-2} U( y) \eta( \tilde{y} ) |\theta_A| 
	+ |\psi| |\theta_A|  \right) 
	\mathbf{1}_{\lambda^{\frac{1}{2}}t^\frac{1}{4} 
	\le|x|< \lambda^{\frac{1}{2}}t^\frac{1}{4} R^{\frac{1}{2}}}. 
\end{aligned}
\]
Then by $|A|\ll1$, we can compute that for $t\geq t_0\gg1$, 
\begin{equation}\label{eq:Nmidpw}
\begin{aligned}
	|\mathcal{N}_{\rm mid}|
	&\lesssim t^{-2}
	\mathbf{1}_{\lambda^{\frac{1}{2}}t^\frac{1}{4}
	\le|x|\le \lambda^{\frac{1}{2}}t^\frac{1}{4} R^\frac{1}{2}}
	\le  A^2 t^{-2+\varepsilon}
	\mathbf{1}_{|x|\le \lambda^{\frac{1}{2}}t^\frac{1}{4} R^\frac{1}{2}} \\
	&\lesssim 
	A^2 t^{-2+\varepsilon}
	\mathbf{1}_{|x|\le 2t^{\frac{5A}{8}+\frac{1}{4}} R^\frac{1}{2}} 
	\lesssim 
	A^2 t^{-2+\varepsilon}
	\mathbf{1}_{|x|\le t^{\frac{5A}{4}+\frac{1}{4}}}.
\end{aligned}
\end{equation}
Similarly to $\mathcal{N}_{\rm in}$, we obtain 
$|\nabla^k (\mathcal{T}_{\rm out} [ \mathcal{N}_{\rm mid} ])| \ll A^2 
t^{-{k/2}} w_{\rm out}$. 
This together with \eqref{eq:ToutNoutesti} and \eqref{eq:ToutNinesti} 
yields 
$| \nabla^k ( \mathcal{T}_{\rm out} [ \mathcal{N} ] )| 
\lesssim A^2 t^{-{k/2}} w_{\rm out}$. 
The proof is complete. 
\end{proof}

Next, we estimate $\mathcal{T}_{\rm out}[{\mathcal{E}}]$, 
where $\mathcal{T}_{\rm out}$ and ${\mathcal{E}}$ are given in 
\eqref{eq:defofout} and \eqref{eq:cEdef}. 

\begin{lemma}\label{lem:cE}
Let  $|A|\ll1$ and $\tau > t_0 \gg1$. 
Then $|\nabla^k (\mathcal{T}_{\rm out}[{\mathcal{E}}])| 
\ll A^2 w_{\rm out}$ 
for $t_0\le t \leq \tau$, $\psi\in B_{\rm out}^\tau$ and $k=0,1$. 
\end{lemma}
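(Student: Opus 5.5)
The plan is to bound each of the five terms in $\mathcal{E}$ (see \eqref{eq:cEdef}) pointwise by a function of the form $v(t)\mathbf{1}_{|x|\le l_2(t)}$ with $l_2(t)\lesssim \lambda R$ or $\lambda R\ll \sqrt t$, and then apply Lemma \ref{Lemma:WZZ24A1} with $b=0$, $l_1=0$, exactly as was done for $\mathcal{N}_{\rm in}$ in the proof of Lemma \ref{lem:cN}. Every term of $\mathcal{E}$ contains either $\phi$ or $\psi$ multiplied by a bubble-localized factor, so $\mathcal{E}$ is supported in $|x|\le 2\lambda R$, except for the last term, which lives on the larger region $|x|\le 2\sqrt t$ but decays like $U(y)$.

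The first step handles the four $\phi$-terms. I will use $\phi\in B_{\rm in}^\tau$, i.e. $|\phi|+\langle y\rangle|\nabla\phi|\le \sigma^{-1}R^{7-a}\langle y\rangle^{-7}$, together with $\sigma\sim t^{1-5A/2}$ from \eqref{eq:behabioroftau-}, $\lambda\sim t^{5A/4}$, $|\dot\lambda|/\lambda\lesssim |A|t^{-1}$ and $|\partial_t(\lambda R)|/(\lambda R)\lesssim t^{-1}$. On the support of $\nabla\eta(y/R)$ we have $|y|\sim R$. This gives
\[
|\mathcal{E}_{\phi}|\lesssim \lambda^{-4}R^{-2}\sigma^{-1}R^{7-a}R^{-7}\mathbf{1}_{R\le|y|\le 2R}+\lambda^{-2}t^{-1}\sigma^{-1}R^{7-a}\langle y\rangle^{-7}\mathbf{1}_{|y|\le 2R}.
\]
In the $x$ variable this is at most $C t^{-1-5A/2}R^{-2-a}\mathbf{1}_{|x|\le 2\lambda R}+Ct^{-2}R^{7-a}\mathbf{1}_{|x|\le 2\lambda R}$. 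Both are bounded by $t^{-2+\varepsilon}\mathbf{1}_{|x|\le t^{5A/4+\varepsilon}}$ for $|A|$ small; for the first term this uses $t^{-1-5A/2}$ with $l_2^{2}\sim t^{5A/2}R^2$. Lemma \ref{Lemma:WZZ24A1} then gives
\[
|\nabla^k\mathcal{T}_{\rm out}[\mathcal{E}_\phi]|\lesssim \sup_{[t/2,t]}v\cdot l_2^{2-k}+t^{-3-k/2}e^{-|x|^2/16t}\int_{t_0}^{t/2} v\, l_2^6\,ds+\cdots,
\]
and the resulting powers are $t^{-1-\delta}$ for some $\delta>0$ once $|A|$ and $\varepsilon$ are small. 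I will double-check that the first term yields $t^{-1}R^{-a}$, which is still $\ll t^{-1}R^{-a_1}$ since $a>a_1$. So this contribution is $\ll A^2 w_{\rm out}$ for $t_0\gg1$ depending on $A$, with the exponential factor handled through \eqref{eq:kernelexp}.

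The second step handles the last term, $2\lambda^{-2}U\psi\eta(\tilde y)(1-\eta_R)$. I will bound it by $\lambda^{-2}\langle y\rangle^{-4}|A|t^{-1}R^{-a_1}\mathbf{1}_{\lambda R\le |x|\le 2\sqrt t}$, which is $\lesssim |A|t^{-1}R^{-a_1}\lambda^{2}|x|^{-4}\mathbf{1}_{\lambda R\le|x|\le 2\sqrt t}$. I then apply Lemma \ref{Lemma:WZZ24A1} with $b=4$, $l_1=\lambda R$, $l_2=2\sqrt t$ and $v=|A|t^{-1+5A/2}R^{-a_1}$. The inner bound is $v\,l_1^{-2}\sim |A|t^{-1}R^{-2-a_1}$, the middle bound is $v|x|^{-2}\le |A|t^{-1}R^{-2-a_1}$, and the far and integral terms give $v\,l_2^2\sim |A|t^{5A/2}R^{-a_1}$ weighted by $t|x|^{-2}$ and $e^{-|x|^2/16t}$. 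This last piece is the main obstacle: $t^{5A/2}$ is not small when $A>0$. I would first try to sharpen it by using $\lambda^2|x|^{-4}\le \lambda^2 (\lambda R)^{-2}|x|^{-2}$ and then Lemma \ref{Lemma:WZZ24A1} with $b=0$ applied to $|A|t^{-1}R^{-2-a_1}|x|^{-2}$. If that fails, I would track the time integral more carefully, since $\int^{t/2} s^{-1+5A/2}\,s^{1}\,ds$ together with the kernel factor $t^{-2}$ gives $t^{-1+5A/2}$. For $A<0$ this is already fine. For $A>0$ I expect the extra factor $R^{-2}$ to be needed, so I will redo the $b=4$ computation splitting at $|x|=\sqrt{\lambda R\sqrt t}$.

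The final step covers the case $k=1$. It follows from the same lemmas and only requires non-sharp bounds, as noted in Remark \ref{rem:k1}.
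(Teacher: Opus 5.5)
Your plan is the paper's proof: the same pointwise bounds on the terms of $\mathcal{E}$, and the same two applications of Lemma \ref{Lemma:WZZ24A1}. These are $b=0$, $l_1=0$, $l_2\simeq 2\lambda R$ for the $\phi$-terms, and $b=4$, $l_1\simeq\lambda R$, $l_2=2\sqrt t$, $v=|A|t^{-1+5A/2}R^{-a_1}$ for the $\psi$-term. As written, however, the proof does not close. You call the $\psi$-term an unresolved obstacle when $A>0$ and leave it to fallback ideas, so the slowly decaying case is not covered. That obstacle comes from an arithmetic slip. The time-integral term in Lemma \ref{Lemma:WZZ24A1} has prefactor $t^{-3-k/2}$, not $t^{-2}$. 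The far-field term carries $|x|^{-4}$, not $t|x|^{-2}$. For $k=0$, use \eqref{eq:kernelexp} and $|x|^{-4}\le (4t)^{-1}|x|^{-2}$ on $|x|>2\sqrt t$ to get
\[
t^{-3}e^{-\frac{|x|^2}{16t}}\int_{t_0}^{t/2}|A|s^{-1+\frac{5A}{2}}R(s)^{-a_1}\,4s\,ds\lesssim |A|t^{-2+\frac{5A}{2}}R^{-a_1}e^{-\frac{|x|^2}{16t}}\lesssim |A|t^{-1+\frac{5A}{2}}w_{\rm out},
\]
\[
v(t)\,l_2(t)^{2}|x|^{-4}e^{-\frac{|x|^2}{16t}}\mathbf{1}_{|x|>2\sqrt t}\lesssim |A|t^{\frac{5A}{2}}R^{-a_1}|x|^{-4}\mathbf{1}_{|x|>2\sqrt t}\lesssim |A|t^{-1+\frac{5A}{2}}w_{\rm out}.
\]
Since $1-5A/2>0$, both are $\ll A^2 w_{\rm out}$ for $t_0$ large depending on $A$, for either sign of $A$. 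Combined with your correct inner and middle bound $|A|t^{-1}R^{-2-a_1}\lesssim |A|R^{-2}w_{\rm out}$, this finishes the term.

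The fallbacks are unnecessary and, as formulated, would fail. Lemma \ref{Lemma:WZZ24A1} covers only $b=0$ and $b=4$. Forcing $|A|t^{-1}R^{-2-a_1}|x|^{-2}$ into $b=0$ via $|x|^{-2}\le(\lambda R)^{-2}$ gives $v\,l_2^2\sim |A|\lambda^{-2}R^{-4-a_1}$. This exceeds $t^{-1}R^{-a_1}$ by a factor of order $|A|t^{1-5A/2}R^{-4}$.

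In the first step, you assert that both $\phi$-contributions are $\lesssim t^{-2+\varepsilon}$ and yield $t^{-1-\delta}$. This is false for the cutoff-derivative terms $\lambda^{-4}R^{-2}\phi\,\Delta\eta$ and $\lambda^{-4}R^{-1}\nabla\phi\cdot\nabla\eta$. They have size $t^{-1-5A/2}R^{-2-a}$ on $|x|\le 2\lambda R$. Lemma \ref{Lemma:WZZ24A1} with $l_2=2\lambda R$ then gives $v\,l_2^2\sim t^{-1}R^{-a}=R^{-(a-a_1)}w_{\rm out}$ for $|x|\le\sqrt t$, which is only a logarithmic gain. This is the sharp contribution in the lemma and the place where $a>a_1$ is used. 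Your ``double-check'' sentence reaches the right conclusion, so keep it. Reserve the $t^{-2+\varepsilon}$ bound for the $t^{-2}R^{7-a}$ terms only. With these two corrections, your argument coincides with the paper's.
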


\begin{proof}
Since the proof of Lemma \ref{lem:cE}  
is easier than that of Lemma \ref{lem:cN}, 
we give fewer details.
We recall from 
\eqref{eq:behabioroftau-}, \eqref{eq:spaceforinner-}
and \eqref{eq:psiesBout} that 
\[
	|\phi| \lesssim t^{-1+\frac{5A}{2}} R^{7-a} \langle y \rangle^{-7}, 
	\quad 
	|\nabla \phi| \lesssim t^{-1+\frac{5A}{2}} R^{7-a} \langle y \rangle^{-8}, 
	\quad |\psi| \lesssim |A| t^{-1} R^{-a_1}. 
\]
In addition, from 
\eqref{eq:assmpfor_mu1_xi-}, \eqref{eq:leadingmu_000-}, \eqref{eq:leadingmu_0-} 
and $|\dot R/R|\ll t^{-1}$ 
by $R(t)=(\log(e+t))^2$, it follows that 
\[
	\lambda \sim t^\frac{5A}{4}, \quad 
	|\dot \lambda| \sim t^{-1+\frac{5A}{4}}, 
	\quad 
	\frac{|\partial_t(\lambda R)|}{\lambda R}
	\leq \frac{|\dot{\lambda}|}{\lambda} + \frac{|\dot{R}|}{R}
	\lesssim t^{-1}.
\]
Moreover, since $y={x/\lambda}$, we have 
\[
	U(y) = \left( 1 + \frac{|x|^2}{24\lambda^2} \right)^{-2}
	\lesssim \lambda^4 |x|^{-4} 
	\lesssim t^{5A} |x|^{-4}. 
\] 

From the above computations, it follows that 
\[
\begin{aligned}
	|{\mathcal{E}}|
	&\lesssim 
	\left( t^{-1-\frac{5A}{2}} R^{5-a} \langle y\rangle^{-7} 
	+ t^{-1-\frac{5A}{2}} R^{6-a} \langle y\rangle^{-8} 
	+ t^{-2} R^{7-a} \langle y\rangle^{-7} 
	\right) \mathbf{1}_{R\leq |y|\leq 2R}  \\
	&\quad 
	+ t^{-2} R^{7-a} \langle y\rangle^{-7} 
	\mathbf{1}_{|y|\leq 2 R}
	+ |A|t^{-1+\frac{5A}{2}} R^{-a_1} 
	|x|^{-4} 
	\mathbf{1}_{|x|\leq 2\sqrt{t}}
	\mathbf{1}_{|y|\geq R}. 
\end{aligned} 
\]
By $\langle y\rangle^{-1}\leq R^{-1}$ for $|y|\geq R$, 
$\langle y\rangle^{-1}\leq 1$ for $y\in\mathbf{R}^6$ and
the subpolynomial growth of $R$
we see that for $|A|\ll1$ and $t\geq t_0\gg1$, 
\[
\begin{aligned}
	|{\mathcal{E}}|
	&\lesssim 
	t^{-1-\frac{5A}{2}} R^{-2-a} 
	\mathbf{1}_{R\leq |y|\leq 2 R} 
	+  t^{-2} R^{7-a} 
	\mathbf{1}_{|y|\leq 2 R} \\
	&\quad 
	+ |A|t^{-1+\frac{5A}{2}} R^{-a_1} 
	|x|^{-4} 
	\mathbf{1}_{\lambda R \leq |x|\leq 2\sqrt{t}}. 
\end{aligned} 
\]
Thus, for some constant $C>1$, 
\begin{equation}\label{eq:cTcEout}
\begin{aligned}
	|{\mathcal{E}}|
	&\lesssim 
	t^{-1-\frac{5A}{2}} R^{-2-a} 
	\mathbf{1}_{|x|\leq 2\lambda  R} 
	+ |A|t^{-1+\frac{5A}{2}} R^{-a_1} |x|^{-4}
	\mathbf{1}_{\lambda R \leq |x|\leq 2\sqrt{t}}  \\
	&\lesssim 
	t^{-1-\frac{5A}{2}} R^{-2-a} 
	\mathbf{1}_{|x|\leq Ct^\frac{5A}{4}  R} 
	+ |A|t^{-1+\frac{5A}{2}} R^{-a_1} |x|^{-4}
	\mathbf{1}_{C^{-1} t^\frac{5A}{4} R \leq |x|\leq 2\sqrt{t}}. 
\end{aligned} 
\end{equation}

By Lemma \ref{Lemma:WZZ24A1} with 
$v(t)=t^{-1-(5A/2)} R(t)^{-2-a}$, $b=0$, $l_1(t)=0$ and 
$l_2(t)=Ct^{5A/4}R(t)$ 
and again by Lemma \ref{Lemma:WZZ24A1} with 
$v(t)=|A|t^{-1+(5A/2)} R(t)^{-a_1}$, $b=4$, 
$l_1(t)=C^{-1} t^{5A/4} R(t)$ and  $l_2(t)=2\sqrt{t}$, 
\[
\begin{aligned}
	&\left| \nabla^k \left(\mathcal{T}_{\rm out}[{\mathcal{E}}] \right) \right| 
	\lesssim
	t^{-3+5A-\frac{k}{2}} R^{4-a} e^{-\frac{|x|^2}{16t}} 
	+|A| t^{-2+\frac{5A}{2}-\frac{k}{2}} R^{-a_1}
	e^{-\frac{|x|^2}{16t}} \\
	&+ t^{-1-\frac{5A}{2}} R(t)^{-2-a}
	\Big( 
	t^{\frac{5A}{2}-\frac{5A}{4}k}  R^{2-k} 
	\mathbf{1}_{|x| \le Ct^\frac{5A}{4}R } 
	+ t^\frac{15A}{2} R^6 |x|^{-4-k}e^{-\frac{|x|^2}{16t}} 
	\mathbf{1}_{|x| \geq Ct^\frac{5A}{4}R } 
	\Big) \\
	&+ |A|t^{-1+\frac{5A}{2}} R(t)^{-a_1}  
	\Big( 
	t^{-\frac{5A}{2}-\frac{5A}{4}k} R^{-2-k} 
	\mathbf{1}_{|x|\le C^{-1} t^\frac{5A}{4} R}  \\
	&\quad 
	+ |x|^{-2-k} 
	\mathbf{1}_{C^{-1} t^\frac{5A}{4} R< |x| \le 2\sqrt{t}} 
	+ t |x|^{-4-k}e^{-\frac{|x|^2}{16t}}  
	\mathbf{1}_{|x| \geq 2\sqrt{t}}
	\Big)
\end{aligned}
\]
for $k=0,1$. 
Taking Remark \ref{rem:k1} into account, 
in the case $k=1$, 
we estimate the right-hand side by that for $k=0$. 
Then, we see that 
\[
\begin{aligned}
	&\left| \nabla^k \left(\mathcal{T}_{\rm out}[{\mathcal{E}}] \right) \right| 
	\lesssim
	t^{-3+5A} R^{4-a} e^{-\frac{|x|^2}{16t}} 
	+|A| t^{-2+\frac{5A}{2}} R^{-a_1}
	e^{-\frac{|x|^2}{16t}} \\
	&+ t^{-1-\frac{5A}{2}} R(t)^{-2-a}
	\Big( 
	t^{\frac{5A}{2}}  R^2 
	\mathbf{1}_{|x| \le Ct^\frac{5A}{4}R } 
	+ t^\frac{15A}{2} R^6 |x|^{-4}e^{-\frac{|x|^2}{16t}} 
	\mathbf{1}_{|x| \geq Ct^\frac{5A}{4}R } 
	\Big) \\
	&+ |A|t^{-1+\frac{5A}{2}} R(t)^{-a_1}  
	\Big( 
	t^{-\frac{5A}{2}} R^{-2} 
	\mathbf{1}_{|x|\le 2\sqrt{t}} 
	+ t |x|^{-4}e^{-\frac{|x|^2}{16t}}  
	\mathbf{1}_{|x| \geq 2\sqrt{t}}
	\Big)
\end{aligned}
\]
for $k=0,1$. 
By \eqref{eq:kernelexp} and $1/2<a_1<a<1$, we can continue to 
compute in a similar way to the derivation of \eqref{eq:Noutint} that 
for $k=0,1$, 
\[
\begin{aligned}
	\left| \nabla^k \left( \mathcal{T}_{\rm out}[{\mathcal{E}}] \right) \right| 
	&\lesssim 
	\left( t^{-1} R^{-a} + |A| t^{-1} R^{-a_1-2} \right)
	(\mathbf{1}_{|x| \leq \sqrt{t}}
	+ t |x|^{-2}  \mathbf{1}_{|x| \ge \sqrt{t}}) \\
	&\lesssim
	(|A|^{-1} R^{-(a-a_1)}+R^{-2}) w_{\rm out} 
	\ll A^2 w_{\rm out}
\end{aligned}
\]
for $t\geq t_0\gg1$, where $t_0$ depends on $|A|$. 
Hence we obtain the desired estimate. 
\end{proof}

Next, we estimate $\mathcal{T}_{\rm out}[\tilde {\mathcal{E}}]$, 
where $\mathcal{T}_{\rm out}$ and $\tilde {\mathcal{E}}=\tilde {\mathcal{E}}[\lambda]$ 
are given in \eqref{eq:defofout} and \eqref{eq:tilcEdef}. 

\begin{lemma}\label{lem:tilcE}
If $|A|\ll1$ and $\tau > t_0 \gg1$, then 
$| \nabla^k ( \mathcal{T}_{\rm out}[\tilde {\mathcal{E}}] ) |
\ll A^2 w_{\rm out}$ 
for $t_0\leq t\leq \tau$, $\psi\in B_{\rm out}^\tau$ and $k=0,1$. 
\end{lemma}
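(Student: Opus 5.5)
We bound the six terms of $\tilde{\mathcal{E}}[\lambda]$ in \eqref{eq:tilcEdef} pointwise, collect them into indicator-type profiles, and feed these profiles into Lemmas \ref{Lemma:WZZ24A1} and \ref{Lemma:WZZ24A2}, in the same way as in the proof of Lemma \ref{lem:cE}. The pointwise facts we use are $U(y)\lesssim \lambda^4|x|^{-4}\lesssim t^{5A}|x|^{-4}$, $|\nabla U(y)|\lesssim \lambda^5|x|^{-5}$, $|Z(y)|\lesssim \langle y\rangle^{-4}$, and $\lambda\sim t^{5A/4}$, $|\dot\lambda|\lesssim |A|t^{-1+5A/4}$ from \eqref{eq:leadingmu_0-}, together with $|\theta_A|\lesssim |A|t^{-1}$ from \eqref{eq:SSS}.

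We first localize each term.
\begin{itemize}
\item The first and third terms live on $\lambda R\le |x|\le 2\sqrt t$, because of the factors $1-\eta_R$ and $\eta(\tilde y)$. There each is bounded by $|A| t^{-1+5A/2}|x|^{-4}\mathbf 1_{\lambda R\le|x|\le 2\sqrt t}$: for the first term use $\lambda^{-3}|\dot\lambda|\lambda^4|x|^{-4}$, and for the third use $\lambda^{-2}\lambda^4|x|^{-4}|A|t^{-1}$.
\item The second term is supported where $\eta(\tilde y)\neq\eta(\tilde y)^2$, that is, on $\sqrt t\le|x|\le2\sqrt t$. There it is bounded by $\lambda^{-4}\lambda^8|x|^{-8}\lesssim t^{5A}t^{-4}$.
\item The last three terms carry derivatives of $\eta(\tilde y)$, so they are also supported on $\sqrt t\le |x|\le 2\sqrt t$. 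There they are bounded by $t^{-1}\lambda^{2}|x|^{-4}+t^{-1/2}\lambda^{2}|x|^{-5}\lesssim t^{-3+5A/2}$.
\end{itemize}
Altogether,
\[
|\tilde{\mathcal E}|\lesssim |A|t^{-1+\frac{5A}{2}}|x|^{-4}\mathbf 1_{C^{-1}t^{\frac{5A}{4}}R\le|x|\le 2\sqrt t}+t^{-3+\frac{5A}{2}}\mathbf 1_{|x|\le 2\sqrt t}.
\]

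For the first profile we apply Lemma \ref{Lemma:WZZ24A1} with $b=4$, $v=|A|t^{-1+5A/2}$, $l_1=C^{-1}t^{5A/4}R$ and $l_2=2\sqrt t$. For the second we apply the same lemma with $b=0$, $l_1=0$ and $l_2=2\sqrt t$. As in Lemma \ref{lem:cE}, for $k=1$ we bound everything by the $k=0$ estimate, as Remark \ref{rem:k1} permits.

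The second profile gives a contribution of order $t^{-2+5A/2}$ times $(\mathbf 1_{|x|\le\sqrt t}+t|x|^{-2}\mathbf 1_{|x|\ge\sqrt t})$, by using \eqref{eq:kernelexp}. This equals $t^{-1+5A/2}R^{a_1}w_{\rm out}$, which is $\ll A^2w_{\rm out}$ once $t_0$ is large depending on $A$.

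The first profile is the main obstacle, because it is only slightly smaller than $w_{\rm out}$. Lemma \ref{Lemma:WZZ24A1} bounds it on $|x|\le l_1$ by $|A|t^{-1+5A/2}l_1^{-2}\sim |A|t^{-1}R^{-2}$. On $l_1<|x|\le 2\sqrt t$ the bound is $|A|t^{-1+5A/2}|x|^{-2}$, and outside it is the Gaussian tail. The $\int_{t_0}^{t/2}$ part contributes $t^{-3}e^{-|x|^2/16t}\int^{t/2}|A|s^{-1+5A/2}ds$. This is harmless for $A<0$; for $A>0$ it is at most $|A|t^{-2+5A/2}$, since $5A/2<1$.

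The trouble is the middle annulus $|x|\sim t^{5A/4}R$. There the bound is about $|A|t^{-1}R^{-2}$, which is only $|A|R^{-2+a_1}w_{\rm out}$. To get $\ll A^2 w_{\rm out}$ we therefore need $|A|^{-1}R^{-2+a_1}\to0$, which holds for $t\ge t_0\gg1$ depending on $A$ since $a_1<1<2$. Away from the annulus, the bound $|x|^{-2}$ with $|x|\ge t^{5A/4}R$ improves the estimate further.

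The case $A>0$ needs a check. For $|x|\ge l_1$ the bound $|A|t^{-1+5A/2}|x|^{-2}\le |A|t^{-1}R^{-2}$ holds, so the profile never exceeds $|A|t^{-1}R^{-2}$ inside $2\sqrt t$. Outside $\sqrt t$ it is comparable to $|A|t^{5A/2}|x|^{-2}\cdot t^{-1}\cdot t$, and must be compared with $w_{\rm out}=t^{0}R^{-a_1}|x|^{-2}$ there. The direct comparison seems to lose a power $t^{5A/2}$. We fix this by using the exponential factor of Lemma \ref{Lemma:WZZ24A1}, $|x|^{-4}e^{-|x|^2/16t}l_2^2\lesssim t|x|^{-4}\lesssim |x|^{-2}$, together with $v\le |A|t^{-1+5A/2}$. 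For $|x|\sim\sqrt t$, however, the term $t^{5A/2}$ remains. We handle it by noting that on $\sqrt t\le|x|\le2\sqrt t$ the original integrand is actually $\lambda^2|A|t^{-1}|x|^{-4}\lesssim|A|t^{-3+5A/2}$, and we move that piece into the second profile. After this splitting, the final bound is $(|A|^{-1}R^{-2+a_1}+t^{-1+5A/2}R^{a_1})A^2w_{\rm out}\ll A^2w_{\rm out}$.
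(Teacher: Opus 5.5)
Your argument follows the paper's proof. You bound $\tilde{\mathcal E}$ pointwise by a profile supported in $C^{-1}t^{5A/4}R\le |x|\le 2\sqrt t$, apply Lemma \ref{Lemma:WZZ24A1} with $b=4$, $l_1=C^{-1}t^{5A/4}R$, $l_2=2\sqrt t$, and compare with $w_{\rm out}$.

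The only difference is bookkeeping. The paper drops the factor $|A|$ and uses the single profile $t^{-1+5A/2}|x|^{-4}\mathbf{1}_{C^{-1}t^{5A/4}R\le|x|\le 2\sqrt t}$. This gives $R^{-2+a_1}w_{\rm out}$, which is already $\ll A^2 w_{\rm out}$ for $t_0=t_0(A)$. Your split into an $|A|$-weighted $b=4$ profile and a $b=0$ profile is correct but not needed.

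For $k=0$ your estimates are right, and the difficulty raised in your last paragraph does not exist. For $|x|\ge\sqrt t$ one has $w_{\rm out}=R^{-a_1}|x|^{-2}$. Both $|A|t^{-1+5A/2}|x|^{-2}$ and the tail $|A|t^{5A/2}|x|^{-4}e^{-|x|^2/16t}$ are $\lesssim |A|t^{-1+5A/2}R^{a_1}w_{\rm out}$, so no re-splitting is required. Also, the $\int_{t_0}^{t/2}$ term should carry the factor $l_2(s)^{2}=4s$. With it you get $|A|t^{-2+5A/2}e^{-|x|^2/16t}$ for either sign of $A$, which is still harmless.

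The step that fails is the case $k=1$. Replacing the $k=1$ output of Lemma \ref{Lemma:WZZ24A1} by the $k=0$ output requires $l_1^{-3}\lesssim l_1^{-2}$ and $|x|^{-3}\lesssim |x|^{-2}$ on $|x|\ge l_1$, i.e.\ $l_1\gtrsim 1$. Remark \ref{rem:k1} allows non-sharp bounds, not this inequality. The reduction is fine for $A>0$, but for $A<0$ one has $l_1=C^{-1}t^{5A/4}R\to 0$.

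In that regime the $k=1$ claim is actually false. Let $\Psi:=\mathcal T_{\rm out}[\tilde{\mathcal E}]$.
\begin{itemize}
\item On $2\lambda R\le |x|\ll \sqrt t$ one has $\tilde{\mathcal E}\approx At^{-1}\lambda^{-2}(\tfrac54 Z+2U)\approx -\tfrac12 A t^{-1}\lambda^{-2}U(y)$, because $Z\approx -2U$ for $|y|\gg1$.
\item Integrate $\partial_t\Psi-\Delta\Psi=\tilde{\mathcal E}$ over $B_{3\lambda R}$. The $\partial_t\Psi$ contribution is smaller by a factor of order $t^{-1}\lambda^2R^2$, so $|\nabla \Psi|\gtrsim |A|t^{-1}\lambda^{-1}R^{-3}$ at $|x|=3\lambda R$.
\item Since $\lambda\sim t^{-5|A|/4}$, this exceeds $A^2t^{-1}R^{-a_1}$ once $t^{5|A|/4}\gg R^{3-a_1}$. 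That happens inside $[t_0,\tau]$ for $\tau$ large, and $t_0$ is fixed independently of $\tau$.
\end{itemize}

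The paper's proof makes exactly the same reduction, when it passes from the $k$-dependent bound to $t^{-1}R^{-2}\mathbf 1_{|x|\le 2\sqrt t}+\cdots$. So you inherit this defect rather than introduce it. It is harmless for the construction: interior gradients are controlled by \eqref{eq:nabcSout}, and the $k=1$ bounds are only used for decay as $|x|\to\infty$, where they do hold. As a proof of the lemma as stated, however, your $k=1$ case only goes through for $A>0$.
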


\begin{proof}
This lemma can be proved 
in the same way as Lemmas \ref{lem:cN} and \ref{lem:cE}.  
Thus, we only give an outline. 
Since $U(y)\leq \langle y\rangle^{-4}$, 
$|\nabla U(y)|\leq \langle y\rangle^{-5}$, 
$|\theta_A|\leq |A|t^{-1}\leq t^{-1}$ by \eqref{eq:SSS}, 
$\eta(\tilde y)(1-\eta_R(y,t))
\lesssim \mathbf{1}_{\lambda R\leq |x| \leq 2\sqrt{t}}$, 
$|\nabla \eta (\tilde{y})|, 
|\Delta \eta (\tilde{y})|, 
|\eta(\tilde y)^2 - \eta(\tilde y)|
\lesssim \mathbf{1}_{1\leq |\tilde y| \leq 2}$, 
one can see from \eqref{eq:tilcEdef} that 
\begin{equation}\label{eq:tilcEpw}
	|\tilde {\mathcal{E}}|\lesssim
	t^{-1+\frac{5A}{2}} |x|^{-4} 
	\mathbf{1}_{C^{-1}t^{\frac{5A}{4}} R \le |x| \le 2\sqrt{t}}. 
\end{equation}
By Lemma \ref{Lemma:WZZ24A1} with 
$v(t)=t^{-1+(5A/2)}$, $b=4$, 
$l_1(t)=C^{-1} t^{5A/4} R(t)$, $l_2(t)=2\sqrt{t}$, 
\[
\begin{aligned}
	&\left| \nabla^k \left( \mathcal{T}_{\rm out}[\tilde {\mathcal{E}}] \right) \right|
	\lesssim 
	t^{-2+\frac{5A}{2}-\frac{k}{2}}  e^{-\frac{|x|^2}{16t}}
	+ t^{-1+\frac{5A}{2}}
	\Big( 
	t^{-\frac{5A}{2}-\frac{5A}{4}k} R^{-2-k} 
	\mathbf{1}_{|x|\le C^{-1} t^\frac{5A}{4} R} \\
	&\quad + |x|^{-2-k} 
	\mathbf{1}_{C^{-1} t^\frac{5A}{4} R< |x| \le 2\sqrt{t}} 
	+ t |x|^{-4-k}e^{-\frac{|x|^2}{16t}}  
	\mathbf{1}_{ |x|>2\sqrt{t} } 
	\Big) \\
	&\lesssim 
	t^{-2+\frac{5A}{2}}  e^{-\frac{|x|^2}{16t}}
	+ t^{-1}
	\Big( 
	R^{-2} \mathbf{1}_{|x|\le 2\sqrt{t}} 
	+ t^{\frac{5A}{2}} |x|^{-2}e^{-\frac{|x|^2}{16t}}  
	\mathbf{1}_{ |x|>2\sqrt{t} } 
	\Big). 
\end{aligned}
\]
Therefore, for $k=0,1$, 
\[
	\left| \nabla^k \left( \mathcal{T}_{\rm out}[\tilde {\mathcal{E}}] \right) \right|
	\lesssim 
	t^{-1} R^{-2} 
	(\mathbf{1}_{|x| \leq \sqrt{t}}
	+ t |x|^{-2}  \mathbf{1}_{|x| \ge \sqrt{t}}) 
	\lesssim
	R^{-2+a_1} w_{\rm out} 
	\ll A^2 w_{\rm out}. 
\]
Then the lemma follows. 
\end{proof}

Let us next check the self-mapping property of $\mathcal{S}_{\rm out}^\tau$.  

\begin{lemma}\label{lem:selfcS}
Let $\mathcal{S}_{\rm out}^\tau$ be given as in \eqref{eq:defofout}. 
Then $\mathcal{S}_{\rm out}^\tau$ maps $B_{\rm out}^\tau$ into itself. 
\end{lemma}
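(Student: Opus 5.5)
We fix $\psi\in B_{\rm out}^\tau$ and take the modulation parameter $\lambda_\psi$ from Proposition \ref{Proposition:SolvabilityforOurinner} and the inner solution $\phi_\psi$ from Proposition \ref{pro:solvinner}. By \eqref{eq:defofG-}, linearity of $\mathcal{T}_{\rm out}$ gives
\[
	\nabla^k \mathcal{S}_{\rm out}^\tau[\psi]
	= \nabla^k \mathcal{T}_{\rm out}[\mathcal{N}]
	+ \nabla^k \mathcal{T}_{\rm out}[\mathcal{E}]
	+ \nabla^k \mathcal{T}_{\rm out}[\tilde{\mathcal{E}}],
	\quad k=0,1 .
\]
We then simply add up Lemmas \ref{lem:cN}, \ref{lem:cE} and \ref{lem:tilcE}. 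For $k=0$ they yield $|\mathcal{S}_{\rm out}^\tau[\psi]|\le CA^2 w_{\rm out}+o(A^2)w_{\rm out}$ for $t_0\le t\le\tau$. Here $C$ is independent of $A$ and $t_0$, since the constants in Lemmas \ref{Lemma:WZZ24A1}, \ref{Lemma:WZZ24A2} are independent of $t_0$. Choosing $|A|$ small so that $CA^2+o(A^2)\le |A|$ gives the required bound $|\mathcal{S}_{\rm out}^\tau[\psi]|\le |A|w_{\rm out}$. Note that this smallness of $A$ is chosen first, and $t_0$ large (depending on $A$) afterwards, consistent with the order of choices in those lemmas.

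For the gradient, the $k=1$ bounds give
\[
	|\nabla \mathcal{S}_{\rm out}^\tau[\psi]|\lesssim A^2 t^{-1/2}w_{\rm out}+o(A^2)w_{\rm out}.
\]
The first term is at most $A^2 t^{-3/2}R^{-a_1}$, using $w_{\rm out}\le t^{-1}R^{-a_1}$. The required bound is $|A|t^{-1}\lambda_0^{-1}R^{-1-a_1}\sim |A|t^{-1-5A/4}R^{-1-a_1}$. Since $|A|\ll1$ implies $t^{-1/2}\ll t^{-5A/4}R^{-1}$, this term is harmless for $t_0\gg1$. The second term, coming from $\mathcal{E}$ and $\tilde{\mathcal{E}}$, is the main obstacle. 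There Lemmas \ref{lem:cE} and \ref{lem:tilcE} only give $w_{\rm out}$ without the factor $t^{-1/2}$, because (per Remark \ref{rem:k1}) the $k=1$ estimates were deliberately not made sharp.

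To handle it, I would return to the pointwise bounds \eqref{eq:cTcEout} and \eqref{eq:tilcEpw}. I would apply Lemma \ref{Lemma:WZZ24A1} with $k=1$ genuinely instead of bounding by the $k=0$ case, which gives gradient bounds with the gain $l_1(t)^{-1}$, $|x|^{-1}$, or $t^{-1/2}$ relative to $k=0$, i.e.\ at least a factor $(t^{5A/4}R)^{-1}$. For example, the $\tilde{\mathcal{E}}$ contribution becomes $t^{-1+5A/2}\cdot t^{-5A/2-5A/4}R^{-3}=t^{-1-5A/4}R^{-3}$ near the origin. This is smaller than $|A|t^{-1}\lambda_0^{-1}R^{-1-a_1}$ because $R^{-2+a_1}\ll|A|$. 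The $\mathcal{E}$ terms are treated in the same way, using $a_1<a$. If some term, such as the Gaussian $t^{-5/2+5A/2}$ piece, fails to fit this scheme, I would instead verify it directly against the weaker uniform bound, exploiting the polynomial gap $t^{-1/2}$ versus $t^{-5A/4}$.

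Finally, I would check that $\mathcal{S}_{\rm out}^\tau[\psi]\in X_{\rm out}^\tau$. Radial symmetry is preserved because $\mathcal{H}$ is radial, given that $\phi_\psi$, $\theta_A$ and $\psi$ are radial. The $C^{1,0}$ regularity and the finiteness of $\|\cdot\|_{\rm out}$ follow from the boundedness of $\mathcal{H}$ on $\mathbf{R}^6\times[t_0,\tau]$ together with standard heat-kernel estimates.
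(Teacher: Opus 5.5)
Your proposal is correct. The $L^\infty$ bound and the radial symmetry are handled as in the paper; the route differs only for the gradient bound $|\nabla\mathcal{S}_{\rm out}^\tau[\psi]|\le |A|t^{-1}\lambda_0^{-1}R^{-1-a_1}$.

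\textbf{The paper's route.} It uses no $k=1$ convolution estimate there (this is the point of Remark \ref{rem:k1}). It merges the pointwise bounds \eqref{eq:contraction1-}, \eqref{eq:Ninpw}, \eqref{eq:Nmidpw}, \eqref{eq:cTcEout} and \eqref{eq:tilcEpw} into the crude uniform bound $|\mathcal{H}|\lesssim t^{-1-\frac{5A}{2}}R^{-2-a}$. It then applies the local gradient estimate of \cite[Lemma 4.1]{LWZZ24}, which bounds $\nabla\tilde\psi$ by $\rho^{-1}$ times a sup of $\tilde\psi$ plus $\rho$ times a sup of $\mathcal{H}$. It takes $\rho=\lambda_0R^{1+\frac{a-a_1}{2}}$ and uses only the already established bound $|\tilde\psi|\lesssim A^2 w_{\rm out}\le A^2t^{-1}R^{-a_1}$. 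This gives $t^{-1-\frac{5A}{4}}R^{-1-\frac{a+a_1}{2}}$ in a few lines.

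\textbf{Your route.} You take the $k=1$ statement of Lemma \ref{lem:cN} for $\mathcal{N}$, which is fine: $A^2t^{-3/2}R^{-a_1}$ beats the target by the polynomial factor $t^{-\frac12+\frac{5A}{4}}$. You then genuinely reapply Lemma \ref{Lemma:WZZ24A1} with $k=1$ to \eqref{eq:cTcEout} and \eqref{eq:tilcEpw}. This closes. The dominant contributions are:
\begin{itemize}
\item $t^{-1-\frac{5A}{4}}R^{-1-a}$ from the inner piece of $\mathcal{E}$, via $v\,l_2$ with $l_2=Ct^{\frac{5A}{4}}R$;
\item $t^{-1-\frac{5A}{4}}R^{-3}$ and $|A|t^{-1-\frac{5A}{4}}R^{-3-a_1}$ from the $b=4$ pieces of $\tilde{\mathcal{E}}$ and $\mathcal{E}$, via $v\,l_1^{-3}$ with $l_1=C^{-1}t^{\frac{5A}{4}}R$.
\end{itemize}
The Gaussian terms and the contributions from $|x|>2\sqrt{t}$ carry an extra polynomial decay and are negligible. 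Each contribution is $\ll |A|t^{-1-\frac{5A}{4}}R^{-1-a_1}$ once $t_0$ is large depending on $A$, precisely because $a_1<a$ and $a_1<2$.

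\textbf{Comparison.} Your argument is more computational, but it stays entirely within the paper's convolution lemmas and yields a spatially resolved gradient bound. The paper's argument is shorter and needs only sup bounds on $\tilde\psi$ and $\mathcal{H}$. That is why it can leave the $k=1$ statements of Lemmas \ref{lem:cE} and \ref{lem:tilcE} non-sharp and use them only for compactness.
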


\begin{proof}
Let $\psi\in B_{\rm out}^\tau$. 
Recall from \eqref{eq:spaceforinner-} and \eqref{eq:spaceforouter-} that 
$\psi\in B_{\rm out}^\tau$ and $\phi_\psi\in B_{\rm in}^\tau$ 
are radially symmetric. 
By the definition of $\mathcal{H}$ in \eqref{eq:defofH-}, 
the function $\mathcal{H}[\phi_\psi,\psi,\lambda_\psi](\cdot,t)$ 
is also radially symmetric for $t_0<t<\tau$. 
This together with the radial symmetry of the heat kernel 
implies that $\mathcal{S}_{\rm out}^\tau$ in \eqref{eq:defofout} 
is also radially symmetric.

Combining Lemmas \ref{lem:cN}, \ref{lem:cE} and \ref{lem:tilcE} 
and recalling $\mathcal{H}=\mathcal{N}+{\mathcal{E}}+\tilde {\mathcal{E}}$ from \eqref{eq:defofG-}, 
we see that if $|A|\ll1$ and $\tau > t_0 \gg1$, then 
\begin{equation}\label{eq:cSout}
	|\mathcal{S}_{\rm out}^\tau[\psi](x,t)| 
	\lesssim A^2 w_{\rm out}(x,t) 
	\quad \mbox{ for }x\in \mathbf{R}^6, \; t_0\leq t\leq \tau. 
\end{equation}
Thus, by the definition of 
$B_{\rm out}^\tau$ in \eqref{eq:spaceforouter-} 
with \eqref{eq:leadingmu_000-}, 
it suffices to show that 
\begin{equation}\label{eq:nabcSout}
	|\nabla\mathcal{S}_{\rm out}^\tau[\psi](x, t)|
	\lesssim A^2 t^{-1-\frac{5A}{4}} R^{-1-a_1} 
	\quad \mbox{ for }x\in \mathbf{R}^6, \; t_0\leq t\leq \tau. 
\end{equation}
We write $\tilde\psi:=\mathcal{S}_{\rm out}^\tau[\psi]$ for simplicity of notation. 
Then, $\tilde\psi$ satisfies
\[
	\tilde\psi_t - \Delta \tilde\psi 
	= \mathcal{H}[\phi_\psi, \psi, \lambda_\psi], \quad
	x\in \mathbf{R}^6,\; t>t_0. 
\]
Note that 
$|\tilde \psi| \lesssim A^2 w_{\rm out}$. 
From \eqref{eq:contraction1-}, \eqref{eq:Ninpw}, \eqref{eq:Nmidpw}, 
\eqref{eq:cTcEout} and \eqref{eq:tilcEpw}, 
it follows from $|A|\ll1$ and $\tau \geq t_0\gg1$ that 
\[
\begin{aligned}
	&|\mathcal{H}|\leq |\mathcal{N}|+|{\mathcal{E}}|+|\tilde {\mathcal{E}}| 
	\lesssim 
	A^2t^{-2} R^{-a_1} \mathbf{1}_{|x| \le \sqrt{t}} 
	+ A^2 R^{-a_1} |x|^{-4} \mathbf{1}_{|x| \ge \sqrt{t}} \\
	&\quad + A^2 t^{-2+\varepsilon} \mathbf{1}_{|x|< 2t^{\frac{5A}{8}+\frac{1}{4}}}
	+ A^2 t^{-2+\varepsilon}\mathbf{1}_{|x|\le t^{\frac{5A}{4}+\frac{1}{4}}} 
	+t^{-1-\frac{5A}{2}} R^{-2-a} \mathbf{1}_{|x|\leq Ct^\frac{5A}{4}  R} \\
	&\quad + |A|t^{-1+\frac{5A}{2}} R^{-a_1} |x|^{-4}
	\mathbf{1}_{C^{-1} t^\frac{5A}{4} R \leq |x|\leq 2\sqrt{t}}
	+ t^{-1+\frac{5A}{2}} |x|^{-4} 
	\mathbf{1}_{C^{-1}t^{\frac{5A}{4}} R \le |x| \le 2\sqrt{t}} \\
	&\lesssim 
	A^2 t^{-2+\varepsilon} 
	+t^{-1-\frac{5A}{2}} R^{-2-a} 
	+ |A|t^{-1-\frac{5A}{2}} R^{-a_1-4} 
	+ t^{-1-\frac{5A}{2}} R^{-4}
	\lesssim t^{-1-\frac{5A}{2}} R^{-2-a}. 
\end{aligned}
\]
By \cite[Lemma 4.1]{LWZZ24} with 
$\rho = \lambda_0 R^{1+{((a-a_1)/2)}}$ 
and by \eqref{eq:cSout} and 
$w_{\rm out}(x,t) \leq t^{-1} R^{-a_1}$ (see \eqref{eq:spaceforouter-}), 
we see from 
$|A|\leq1$ and $a_1<a$ that 
\[
\begin{aligned}
	\|\nabla \tilde \psi(\cdot,t)\|_{L^\infty(\mathbf{R}^6)} 
	&\lesssim 
	\left( \lambda_0 R^{1+ \frac{a-a_1}{2}} \right)^{-1} 
	A^2 t^{-1} R^{-a_1}  
	+ \left( \lambda_0 R^{1+ \frac{a-a_1}{2}} \right) 
	t^{-1-\frac{5A}{2}} R^{-2-a}   \\
	&\leq
	2t^{-1-\frac{5A}{4}} R^{-1- \frac{a+a_1}{2}} 
	\ll A^2 t^{-1-\frac{5A}{4}} R^{-1- a_1}
\end{aligned}
\]
for $\tau \geq t\geq t_0\gg1$, where $t_0$ depends on $A^2$. 
This shows \eqref{eq:nabcSout}. 
Therefore, if $|A|\ll1$, then 
$\|\mathcal{S}_{\rm out}^\tau[\psi]\|_{\rm out}\lesssim |A|\leq 1$ 
for $\psi\in B_{\rm out}^\tau$. 
The lemma follows. 
\end{proof}

Based on the above lemmas, 
we prove Proposition \ref{Proposition:SolvabilityforOuter-} 
by Schauder's fixed point theorem. 
We recall that the function spaces used in the proof below 
are given by \eqref{eq:spaceforinner-}, 
\eqref{eq:spaceforouter-} and \eqref{eq:scdef}. 

\begin{proof}[Proof of Proposition \ref{Proposition:SolvabilityforOuter-}]
In view of  Lemma \ref{lem:selfcS}, 
it suffices to show that 
$B_{\rm out}^\tau \ni \psi \mapsto 
\mathcal{S}_{\rm out}^\tau[\psi]\in B_{\rm out}^\tau$ is continuous and compact 
for applying Schauder's fixed point theorem. 
The continuity immediately follows from Lemmas \ref{Lemma:stabilityofparameters} and \ref{Lemma:stabilityofin}.

As for compactness, 
let $\{\psi^{(j)}\}_{j=1,2,\ldots} \subset B_{\rm out}^\tau$ 
and let $Q\subset \mathbf{R}^6\times [t_0,\tau]$ be a compact set. 
We write $\tilde\psi^{(j)}:=\mathcal{S}_{\rm out}^\tau[\psi^{(j)}]$. 
From Lemmas \ref{lem:cN}, \ref{lem:cE} and \ref{lem:tilcE} 
together with $w_{\rm out}(x,t) = t^{-1} R(t)^{-a_1}
[\mathbf{1}_{|x| \le \sqrt{t}} 
+ t|x|^{-2} \mathbf{1}_{|x| \ge \sqrt{t}}]$ in \eqref{eq:spaceforouter-}, 
it follows that 
\begin{equation}\label{eq:tilpsipw}
	|\nabla^k \tilde \psi^{(j)}(x,t)| 
	\lesssim w_{\rm out}(x,t)
	\leq 
	t^{-1} \mathbf{1}_{|x| \le \sqrt{t}} 
	+ |x|^{-2} \mathbf{1}_{|x| \ge \sqrt{t}} 
\end{equation}
for $x\in\mathbf{R}^6$, $t_0\leq t\leq\tau$ and $k=0,1$, 
where the right-hand side is independent of $j$. 
Thus, $\tilde \psi^{(j)}$ and $\nabla \tilde \psi^{(j)}$ 
are uniformly bounded on $Q$. 
Moreover, since $\mathcal{H}[\phi_\psi,\psi,\lambda_\psi]$ in \eqref{eq:defofout} 
is bounded up to $t=t_0$ by the proof of Lemma \ref{lem:selfcS}, 
we can check that 
$\tilde \psi^{(j)}$ and $\nabla \tilde \psi^{(j)}$ are H\"older continuous 
in $Q$ uniformly for $j$, in particular, 
they are equi-continuous in $Q$. 
The Ascoli-Arzel\'a theorem and the diagonal arguments guarantee 
the existence of $\tilde \psi\in C^1(\mathbf{R}^6\times[t_0,\tau])$ 
and a subsequence still denoted by $\tilde \psi^{(j)}$ such that 
for each compact set $Q\subset \mathbf{R}^6\times [t_0,\tau]$,  
\begin{equation}\label{eq:limtilpsi111}
	\tilde \psi^{(j)} \to \tilde \psi, \quad 
	\nabla \tilde \psi^{(j)} \to \nabla \tilde \psi \quad  
	\mbox{ uniformly in }Q\mbox{ as }j\to\infty. 
\end{equation}

We show the uniform convergence of $\tilde \psi^{(j)}$ 
and $\nabla \tilde \psi^{(j)}$ 
in $\mathbf{R}^6\times[t_0,\tau]$. 
Let $\varepsilon>0$. 
By \eqref{eq:tilpsipw}, we have 
$|\nabla^k \tilde \psi^{(j)}(x,t)| 
\lesssim |x|^{-2}$. 
Letting $j\to\infty$ gives 
$|\nabla^k \tilde \psi(x,t)|\lesssim |x|^{-2}$. 
Then there exists $\tilde R>0$ independent of $j$ such that 
\[
	|\nabla^k (\tilde \psi^{(j)} - \tilde \psi)| 
	\leq |\nabla^k \tilde \psi|+|\nabla^k \tilde \psi^{(j)}|
	\lesssim \varepsilon 
	\quad \mbox{ for }x\in \mathbf{R}^6\setminus B_{\tilde R}, \; t_0\leq t\leq \tau 
\]
for each $j$ and $k=0,1$. 
Then by \eqref{eq:limtilpsi111}, we see that 
\[
	\tilde \psi^{(j)} \to \tilde \psi, \quad 
	\nabla \tilde \psi^{(j)} \to \nabla \tilde \psi \quad  
	\mbox{ uniformly in }\mathbf{R}^6\times[t_0,\tau]\mbox{ as }j\to\infty. 
\]
Thus, the compactness of $\mathcal{S}_{\rm out}^\tau$ follows.
We can now apply Schauder's fixed point theorem 
to obtain the desired fixed point of $\psi=\mathcal{S}_{\rm out}^\tau[\psi]$. 
The proof is complete.
\end{proof}

\begin{remark}
The key to proving 
Proposition \ref{Proposition:SolvabilityforOuter-} is 
the cancellation of $|\theta_A|\theta_A$ in \eqref{eq:Nouteqcomp}. 
It yields the estimate \eqref{eq:contraction1-}. 
In particular, we obtain 
\begin{equation}\label{eq:Noutquad}
	|\mathcal{N}_{\rm out}[\phi_\psi, \psi, \lambda_\psi]| 
	\lesssim  \underbrace{A^2 t^{-2} R^{-a_1} 
	\left[\mathbf{1}_{|x| \le \sqrt{t}} 
	+ t^2|x|^{-4} \mathbf{1}_{|x| \ge \sqrt{t}}\right]
	}_{\text{dominant contribution from } |\theta_A||\psi|} 
	\ll \theta_A(x,t)^2.
\end{equation}
Thus, we can control the quadratic nonlinearity 
coming from the equation \eqref{eq:6Dt0}.
On the other hand, 
if we employ the solution of the linear heat equation 
\[
	\tilde{\theta}_A(x,t) 
	:= A \int_{\mathbf{R}^6} (4\pi t)^{-3} e^{-\frac{|x-y|^2}{4t}} (1+|y|)^{-2} dy
\]
instead of $\theta_A$ (see \cite[(3.4)]{WZ25} for instance), 
then such a cancellation does not occur. 
Consequently, the counterpart of \eqref{eq:Noutquad} becomes 
\[
	|\mathcal{N}_{\rm out}[\phi_\psi, \psi, \lambda_\psi]| 
	\lesssim 
	\underbrace{A^2 t^{-2} 
	\left[\mathbf{1}_{|x| \le \sqrt{t}} 
	+ t^2|x|^{-4} \mathbf{1}_{|x| \ge \sqrt{t}}\right]
	}_{\text{dominant contribution from } 
	|\tilde{\theta}_A|^2}\sim \tilde \theta_A(x,t)^2, 
\]
and so we cannot control the nonlinearity. 
Indeed, further computations show that 
$\mathcal{S}_{\rm out}^\tau$ 
is not necessarily a self-map of $B_{\rm out}^\tau$. 
Hence forward self-similar solutions 
are essential in our construction. 
\end{remark}

We are now in a position to prove Theorem \ref{Theorem:A}. 

\begin{proof}[Proof of Theorem \ref{Theorem:A}]
Proposition \ref{Proposition:SolvabilityforOuter-} yields 
a solution $(\lambda^{(\tau)}, \phi^{(\tau)}, \psi^{(\tau)})$ 
of the gluing system \eqref{eq:Innerproblem-} and \eqref{eq:Outerproblem-} 
on $(t_0,\tau)$ for each $\tau>t_0$. 
Let $\{\tau_i\}_{i=1}^\infty\subset (t_0, \infty)$ 
be an increasing sequence satisfying $\tau_i \to \infty$ 
as $i \to \infty$. 
Note that 
$(\lambda^{(\tau_i)}, \phi^{(\tau_i)}, \psi^{(\tau_i)})$ 
is a solution on $(t_0 ,t_0+j)$ for each $j$ with $i$ large. 
For $j=1$, since 
$\psi^{(\tau_i)} \in B_{\rm out}^{t_0+1}$ 
satisfies $\psi^{(\tau_i)}=\mathcal{S}_{\rm out}^{t_0+1}[\psi^{(\tau_i)}]$ 
and $\mathcal{S}_{\rm out}^{t_0+1}:B_{\rm out}^{t_0+1}\to B_{\rm out}^{t_0+1}$ 
is compact and continuous, 
there exists a subsequence 
$\{\tau_i^1\}_{i=1}^\infty \subset \{\tau_i\}_{i=1}^\infty$ 
such that $(\lambda^{(\tau_i^1)}, \phi^{(\tau_i^1)}, \psi^{(\tau_i^1)})$
converges to a solution 
$(\lambda^{(\infty)}, \phi^{(\infty)}, \psi^{(\infty)})$ 
of the gluing system on $(t_0,t_0+1)$. 
By repeating this argument for each $j\geq2$, 
we can extend $(\lambda^{(\infty)}, \phi^{(\infty)}, \psi^{(\infty)})$ 
to $(t_0, t_0+j)$ and we obtain a subsequence 
$\{\tau_i^j\}_{i=1}^\infty \subset \{\tau_i^{j-1}\}_{i=1}^\infty$ 
such that 
$(\lambda^{(\tau_i^j)}, \phi^{(\tau_i^j)}, \psi^{(\tau_i^j)})$
converges to $(\lambda^{(\infty)}, \phi^{(\infty)}, \psi^{(\infty)})$ 
for each $j$ as $i\to\infty$. 
By the diagonal argument, $(\lambda^{(\infty)}, \phi^{(\infty)}, \psi^{(\infty)})$ 
can be extended to $(t_0, \infty)$ 
as a global-in-time solution of the gluing system. 
Then, we obtain a solution $u$ of \eqref{eq:6Dt0} 
satisfying \eqref{eq:Ansatzu}. 

Let us consider the sign of $u$. 
Since $\psi(\cdot,t_0)\equiv0$, we have 
\[
\begin{aligned}
	u(x,t_0) 
	&= \lambda^{(\infty)}(t_0)^{-2} 
	U\left( \frac{x}{\lambda^{(\infty)}(t_0)}\right) 
	\eta\left( \frac{x}{\sqrt{t_0}}\right) + \theta_A(x,t_0)\\
	&\quad 
	+\lambda^{(\infty)}(t_0)^{-2} 
	\phi^{(\infty)}\left( \frac{x}{\lambda^{(\infty)}(t_0)}, t_0\right) 
	\eta\left( \frac{x}{\lambda^{(\infty)}(t_0)R(t_0)}\right). 
\end{aligned}
\]
From the above construction, 
it follows that $\phi^{(\infty)} \in B_{\rm in}^\infty$, 
where $B_{\rm in}^\infty$ is defined 
by replacing $t_0\leq t\leq \tau$ with $t_0\leq t<\infty$ 
in \eqref{eq:spaceforinner-}. 
If $A>0$ and $t_0\gg 1$, then 
\[
	u(x,t_0) \geq 
	C^{-1} \lambda^{(\infty)}(t_0)^{-2}
	\left[24\left\langle \frac{x}{\lambda(t_0)} \right\rangle^{-4} 
	- C \sigma(t_0)^{-1} R^{7-a} 
	\left\langle \frac{x}{\lambda(t_0)} \right\rangle^{-7} \right]>0. 
\]
By the maximum principle, $u$ is positive when $A>0$. 
As for $A<0$, we consider the region $ |x| > 2\sqrt{t}$.
From Lemma \ref{thm:Psi_two_sided}, it follows that 
\[
	u(x,t)
	= \theta_A(x,t) +\psi^{(\infty)}(x,t)\lesssim -|A|  |x|^{-2}
	+ |A| R^{-a_1}|x|^{-2} <0
\]
for  $t\gg1$.
This together with Lemma \ref{Lemma:signofu} implies 
that $u$ is sign-changing when $A<0$.
Therefore, after shifting $t=t_0$ to $t=0$, 
the resultant function is a global-in-time solution of \eqref{eq:SHE1}
satisfying the desired properties. 
The proof is complete.
\end{proof}

\appendix
\section{Overview of the gluing construction}\label{sec:blue}
In this section, we outline a blueprint for the desired solution and the strategy 
for its gluing construction.

\subsection{Blueprint}
We seek a solution of the form:
\[
	u(x,t) 
	= 
	\underbrace{ 
	\lambda^{-2}(t) U\left( y \right) 
	\eta\left( \tilde{y} \right) 
	+ \theta_A(x,t) 
	}_{\text{leading terms}} 
	+ \underbrace{ 
	\underbrace{ 
	\lambda^{-2}(t) \phi\left( y, t \right) 
	\eta_R(y,t) 
	}_{\text{an inner profile}}
	+\underbrace{ \psi(x,t) 
	}_{\text{an outer profile}}
	}_{\text{remainder terms}}
\]
for $(x,t) \in \mathbf{R}^6 \times (t_0, \infty)$, 
where $y=x/\lambda(t)$, $\tilde{y}=x/\sqrt{t}$ 
and unknown functions are $\lambda$, $\phi$ and $\psi$. 
Each of the components is as follows. 

\subsubsection*{Constants}
\begin{itemize}
\item
$A\neq 0$ is a constant with $|A|$ sufficiently small. 
\item
$1/2<a_1<a<1$ are fixed constants. 
\end{itemize}

\subsubsection*{Given functions}
\begin{itemize}
    \item 
    $\displaystyle{U(y) = \left( 1 + \frac{|y|^2}{24}  \right)^{-2}}$ 
    is the Aubin--Talenti bubble on $\mathbf{R}^6$. 
    \item 
    $\displaystyle{
    \theta_A(x,t) = (t+1)^{-1} \Theta_A\left(\frac{|x|}{\sqrt{t+1}}\right)}$ 
    is a radially symmetric self-similar solution 
    of $\partial_t u =\Delta u + |u|u$ 
    with $\Theta_A(0)=A$ and $\Theta_A'(0)=0$. 
    \item 
    $R(t):=(\log(e+t))^2$. 
    \item 
    $\eta\in C^\infty_0(\mathbf{R}^6)$ satisfies 
    $\eta(x) =1$ for $|x| \le 1$ and $\eta (x)=0$ for $|x| \ge 2$. 
    \item
    $\eta_R(y,t) = \eta(y/R(t))$. 
\end{itemize}

\subsubsection*{Modulation parameter} 
We construct $\lambda \in \Lambda^\tau$. 
\begin{itemize}
    \item
    $\displaystyle \Lambda^\tau= \left\{ 
	\lambda(t); \; 
	\lambda=\lambda_0+\mu>0, \;
	\mu  \in C^1([t_0,\tau]), \; 
	|\mu| \le \frac{\lambda_0}{9}, \; 
	|\dot \mu| \le \frac{|\dot{\lambda}_0|}{9}
	\right\}$. 
    \item 
	$\displaystyle \lambda_0(t) \sim t^\frac{5A}{4}$ 
	for $t>t_0\gg1$. 
    \item
    $\mu \in B_{\rm sc}^\tau 
	= \{f \in C([t_0, \tau];\mathbf{R}); \; \|f\|_{\rm sc} \le |A|\}$. 
	\item
	$\|f\|_{\rm sc} 
	= \sup_{t_0 \leq t\leq  \tau} \lambda_0(t)^{-1} |f(t)|$.
\end{itemize}

\subsubsection*{Inner profile}
We construct $\phi\in B_{\rm in}^\tau$. 
\begin{itemize}
    \item
    $B_{\rm in}^\tau = \left\{ \phi\in X_{\rm in}^\tau; 
	\; \|\phi\|_{\rm in} \le 1 \right\}$. 
    \item
    $X_{\rm in}^\tau = \left\{ \phi(y,t); \; 
	\begin{aligned} 
	&\mbox{$\phi(\cdot,s)$ is radially symmetric for $\sigma(t_0)\leq s\leq \sigma(\tau)$,} \\
	&\phi \in C^{1,0}\left( \bigcup_{\sigma(t_0)\leq s\leq \sigma(\tau)} B_{2R(s)} \times \{s\} \right),  
	\; \|\phi\|_{X_{\rm in}^\tau} <\infty
	\end{aligned}
	\right\}$. 
	\item 
	$\displaystyle 
	\|\phi\|_{\rm in}
	= \|\phi\|_{X_{\rm in}^\tau} 
	= \sup_{y \in B_{2R(s)}, \; \sigma(t_0)\leq s \leq \sigma(\tau)} 
	\frac{ \langle y\rangle|\nabla \phi(y,s)| + |\phi(y,s)|}{w_{\rm in}(y,s)}$. 
	\item
	$w_{\rm in}(y,s)= s^{-1} R(s)^{7-a} \langle y\rangle^{-7}$ with  
	$\langle y \rangle = \sqrt{1+|y|^2}$. 
    \item
    $\displaystyle \sigma(t) = \int_{t_0}^t \frac{ds}{\lambda(s)^2}
	+ \frac{t_0}{\lambda_0(t_0)^2}$ for $t_0\leq t\leq \tau$ 
	and 
	$\sigma(t) \sim t^{1-(5A/2)}$  for $t\gg1$
	in our case 
	$\lambda_0(t) \sim t^{5A/4}$. 
\end{itemize}

\subsubsection*{Outer profile}
We construct $\psi\in B_{\rm out}^\tau$. 
\begin{itemize}
    \item
    $\displaystyle 
    B_{\rm out}^\tau = \left\{ \psi\in X_{\rm out}^\tau ; \; 
	\begin{aligned}
	&|\psi| \leq |A| w_{\rm out}, \; 
	|\nabla\psi| \leq |A| t^{-1}\lambda_0(t)^{-1} R(t)^{-1-a_1}\\ 
	&\mbox{ for }x \in \mathbf{R}^6, \; t_0\leq t\leq \tau
	\end{aligned}
	\right\}$.  
	\item
	$X_{\rm out}^\tau = \left\{ \psi(x,t);\; 
	\begin{aligned}
	&\mbox{$\phi(\cdot,t)$ is radially symmetric for each $t_0\leq t\leq \tau$,} \\
	&\psi \in C^{1,0}(\mathbf{R}^6\times[t_0,\tau]), 
	\; \|\psi\|_{\rm out}<\infty
	\end{aligned}
	\right\}$. 
	\item
	$\displaystyle 
	\|\psi\|_{\rm out} 
	= \sup_{x \in \mathbf{R}^6, \; t_0\leq t\leq \tau } |\psi(x,t)| 
	+ \sup_{x \in \mathbf{R}^6, \; t_0\leq t\leq \tau } |\nabla \psi(x,t)|$. 
	\item
	$w_{\rm out}(x,t) = t^{-1} R(t)^{-a_1}
	[ \mathbf{1}_{|x| \le \sqrt{t}} 
	+ t|x|^{-2} \mathbf{1}_{|x| \ge \sqrt{t}} ]$. 
\end{itemize}

\subsection{Strategy}
The inner-outer gluing method is carried out as follows.

\subsubsection*{Step 1} 
We formulate the inner-outer gluing system. 
Fix $\tau\in(t_0,\infty)$ and $\psi \in B_{\rm out}^\tau$.

\subsubsection*{Step 2} 
Find $\lambda= \lambda_\psi^{(\tau)}\in \Lambda^\tau$ 
satisfying the orthogonality condition
\[
	\int_{\R^6}  \mathcal{G}[\psi, \lambda](y,t)Z(y)\eta_{4R(t)}(y) dy =0
	\quad \mbox{ for }t_0<t<\tau,
\]
where $\mathcal{G}$ and $Z$ are defined by \eqref{eq:defofH-} 
and \eqref{eq:kernels}, respectively. 

\subsubsection*{Step 3} 
Find $\phi=\phi_\psi^{(\tau)} \in B_{\rm in}^\tau$ solving the inner problem
\[
\lambda^2 \partial_t \phi - \Delta_y \phi 
	- 2 U(y) \phi 
	= \mathcal{G}[\psi, \lambda], 
    \quad
	y \in B_{4R(t)}, \; t_0<t<\tau. 
\]
The orthogonality condition in Step 2  
guarantees the existence of such $\phi$. 

\subsubsection*{Step 4}
Solve
\[
\left\{ 
\begin{aligned}
	&\partial_t \tilde{\psi} - \Delta_x \tilde{\psi} = \mathcal{H}[\phi, \psi, \lambda], 
	&&x\in \mathbf{R}^6, \; t_0<t<\tau, \\
	&\tilde{\psi}(\cdot,t_0)=0, &&x\in \mathbf{R}^6,
\end{aligned}
\right. 
\]
where $\mathcal{H}$ is defined by \eqref{eq:defofG-}.

\subsubsection*{Step 5}
Under appropriate conditions, 
$\lambda_\psi^{(\tau)}$, $\phi_\psi^{(\tau)}$ and $\tilde{\psi}$ 
can be uniquely determined by $\psi$.
This defines a map $\mathcal{S}_{\rm out}^\tau: \psi \mapsto \tilde{\psi}$. We show that
(i) $\mathcal{S}_{\rm out}^\tau$ maps 
$B_{\rm out}^\tau$ into itself; 
(ii) $\mathcal{S}_{\rm out}^\tau:B_{\rm out}^\tau \to B_{\rm out}^\tau$ 
is continuous; 
(iii) $\mathcal{S}_{\rm out}^\tau:B_{\rm out}^\tau \to B_{\rm out}^\tau$ 
is compact.

\subsubsection*{Step 6}
Using Schauder's fixed point theorem, 
we find a fixed point $\psi^{(\tau)}$ of $\mathcal{S}_{\rm out}^\tau$ 
and show that 
the triplet $(\lambda_\psi^{(\tau)}, \phi_\psi^{(\tau)}, \psi^{(\tau)})$ 
has a limit $(\lambda^{(\infty)}, \phi^{(\infty)}, \psi^{(\infty)})$ 
in a suitable sense as $\tau \to \infty$. 
Then, this is the desired solution.

\section*{Acknowledgments}
The authors are grateful to Professor Junichi Harada for introducing them 
to the problem considered in this paper and for valuable comments. 
The second author was supported in part 
by JSPS KAKENHI Grant Numbers 22KK0035, 23K12998 and 23K22402.
The third author was supported in part 
by JSPS KAKENHI Grant Numbers 22KK0035, 23K13005 and 25KJ0013.


\begin{thebibliography}{10}

\bibitem{AD25}
G. Ageno, M. del Pino, 
Infinite time blow-up for the three dimensional energy critical heat equation 
in bounded domains. 
Math. Ann. 391 (2025), no. 1, 1--94.

\bibitem{CGS89}
L. A. Caffarelli, B. Gidas, J. Spruck, 
Asymptotic symmetry and local behavior 
of semilinear elliptic equations with critical Sobolev growth.
Comm. Pure Appl. Math. 42 (1989), no. 3, 271--297.

\bibitem{CDM20}
C. Cort\'azar, M. del Pino, M. Musso, 
Green's function and infinite-time bubbling in the critical nonlinear 
heat equation. 
J. Eur. Math. Soc. (JEMS) 22 (2020), no. 1, 283--344.

\bibitem{dPMW19}
M. del Pino, M. Musso, J. Wei, 
Type II blow-up in the 5-dimensional energy critical heat equation. 
Acta Math. Sin. (Engl. Ser.) 35 (2019), no. 6, 1027--1042. 

\bibitem{DMW20}
M. del Pino, M. Musso, J. Wei, 
Infinite-time blow-up for the 3-dimensional energy-critical 
heat equation.
Anal. PDE 13 (2020), no. 1, 215--274.

\bibitem{dPMWZZpre}
M. del Pino, M. Musso, J. Wei, Q. Zhang, Y. Zhou, 
Type II Finite time blow-up 
for the three dimensional energy critical heat equation, 
preprint, arXiv:2002.05765.

\bibitem{FK12}
M. Fila, J. R. King, 
Grow up and slow decay in the critical Sobolev case. 
Netw. Heterog. Media 7 (2012), no. 4, 661--671.

\bibitem{GK03}
V. A. Galaktionov, J. R. King, 
Composite structure of global unbounded solutions 
of nonlinear heat equations with critical Sobolev exponents.
J. Differential Equations 189 (2003), no. 1, 199--233.

\bibitem{GV97}
V. A. Galaktionov, J. L. Vazquez, 
Continuation of blowup solutions of nonlinear heat equations 
in several space dimensions.
Comm. Pure Appl. Math. 50 (1997), no. 1, 1--67.

\bibitem{Ha20}
J. Harada, 
A type II blowup for the six dimensional energy critical heat equation. 
Ann. PDE 6 (2020), no. 2, Paper No. 13, 63 pp.

\bibitem{Ha25pre}
J. Harada, 
Oscillatory behavior of solutions to the critical Fujita equation in 6D, 
preprint, arXiv:2511.17891. 

\bibitem{HW82}
A. Haraux, F. B. Weissler, 
Nonuniqueness for a semilinear initial value problem.
Indiana Univ. Math. J. 31 (1982), no. 2, 167--189.

\bibitem{Ka96}
T. Kawanago, 
Asymptotic behavior of solutions of a semilinear heat equation 
with subcritical nonlinearity.
Ann. Inst. H. Poincar\'e{} C Anal. Non Lin\'eaire 13 (1996), no. 1, 1--15.

\bibitem{LWZZ24}
Z. Li, J. Wei, Q. Zhang, Y. Zhou, 
Long-time dynamics for the energy critical heat equation in $\mathbf{R}^5$. 
Nonlinear Anal. 247 (2024), Paper No. 113594, 15 pp.

\bibitem{Na06}
Y. Naito, 
An ODE approach to the multiplicity of self-similar solutions 
for semi-linear heat equations.
Proc. Roy. Soc. Edinburgh Sect. A 136 (2006), no. 4, 807--835.

\bibitem{Na20}
Y. Naito, 
Asymptotically self-similar behaviour of global solutions 
for semilinear heat equations with algebraically decaying initial data.
Proc. Roy. Soc. Edinburgh Sect. A 150 (2020), no. 2, 789--811.

\bibitem{PY03}
P. Pol\'a\v cik, E. Yanagida, 
On bounded and unbounded global solutions 
of a supercritical semilinear heat equation.
Math. Ann. 327 (2003), no. 4, 745--771.

\bibitem{PY14}
P. Pol\'a\v cik, E. Yanagida, 
Global unbounded solutions of the Fujita equation in the intermediate range.
Math. Ann. 360 (2014), no. 1-2, 255--266.

\bibitem{Qu08}
P. Quittner, 
The decay of global solutions of a semilinear heat equation.
Discrete Contin. Dyn. Syst. 21 (2008), no. 1, 307--318.

\bibitem{Sc12}
R. Schweyer, 
Type II blow-up for the four dimensional energy critical 
semi linear heat equation.
J. Funct. Anal. 263 (2012), no. 12, 3922--3983.

\bibitem{SW19}
M. Sobajima, Y. Wakasugi, 
Weighted energy estimates for wave equation 
with space-dependent damping term for slowly decaying initial data.
Commun. Contemp. Math. 21 (2019), no. 5, 1850035, 30 pp.

\bibitem{WZZ24}
J. Wei, Q. Zhang, Y. Zhou, 
On Fila-King conjecture in dimension four.
J. Differential Equations 398 (2024), 38--140.

\bibitem{WZ25}
J. Wei, Y. Zhou, 
Some global solutions to the energy-critical semilinear heat equation.
J. Elliptic Parabol. Equ. 11 (2025), no. 3, 2279--2301.

\end{thebibliography}
\end{document}